\theoremstyle{definition}
\newtheorem{thm}{Theorem}[section]
\newtheorem{prop}[thm]{Proposition}
\newtheorem{lma}[thm]{Lemma}
\newtheorem{cor}[thm]{Corollary}
\newtheorem{qu}[thm]{Question}
\newtheorem{qus}[thm]{Questions}
\newtheorem{remark}[thm]{Remark}
\newtheorem{remarks}[thm]{Remarks}
\newtheorem{ex}[thm]{Example}
\newtheorem{fact}[thm]{Fact}
\newtheorem{facts}[thm]{Facts}
\theoremstyle{plain}
\numberwithin{equation}{section}
\begin{document}

\title{Closed ideals in the algebra of compact-by-approximable operators}
\author[Hans-Olav Tylli and Henrik Wirzenius]{Hans-Olav Tylli and Henrik Wirzenius}
\address{Tylli: Department of Mathematics and Statistics, Box 68,
FI-00014 University of Helsinki, Finland}
\email{hans-olav.tylli@helsinki.fi}
\address{Wirzenius: Department of Mathematics and Statistics, Box 68,
FI-00014 University of Helsinki, Finland}
\email{henrik.wirzenius@helsinki.fi}

\subjclass[2010]{46B28, 47L10, 47B10}
\keywords{Quotient algebra, compact-by-approximable operators, closed ideals, approximation properties}

\begin{abstract}
We construct various examples of non-trivial closed ideals of the compact-by-approximable 
algebra $\mathfrak{A}_X =:\mathcal K(X)/\mathcal A(X)$ 
on Banach spaces $X$ failing the approximation property. 
The examples include the following: (i) if $X$ has cotype $2$, $Y$ has type $2$,  
$\mathfrak{A}_X \neq \{0\}$ and $\mathfrak{A}_Y \neq \{0\}$, then $\mathfrak{A}_{X \oplus Y}$  
has at least $2$ closed ideals,
(ii) there are closed subspaces $X \subset \ell^p$ for $4 < p < \infty$ and $X \subset c_0$
such that $\mathfrak{A}_X$ contains a non-trivial closed ideal, 
(iii) there is a Banach space $Z$ such that 
$\mathfrak{A}_Z$ contains an uncountable lattice of closed ideal having the reverse 
order structure of the
power set of the natural numbers. Some of our examples involve
non-classical approximation properties 
associated to various Banach operator ideals. We also discuss
the existence of compact non-approximable operators $X \to Y$, 
where $X \subset \ell^p$ and $Y \subset \ell^q$
are closed subspaces for $p \neq q$.
\end{abstract}

\maketitle

\section{Introduction}\label{intro}

Given Banach spaces  $X$ and $Y$, let $\mathcal K(X,Y)$ be the class of compact operators 
$X \to Y$. The uniform norm closure 
$\mathcal A(X,Y) = \overline{\mathcal F(X,Y)}$ defines the  
class of approximable operators,
where $\mathcal F(X,Y)$ is the linear subspace consisting 
of the bounded finite-rank operators $X \to Y$. We abbreviate 
$\mathcal K(X) = \mathcal K(X,X)$ and $\mathcal A(X) = \mathcal A(X,X)$ for $X = Y$. 
One obtains the quotient algebra $\mathfrak{A}_X =:  \mathcal K(X)/\mathcal A(X)$
of the  compact-by-approximable operators on $X$, since $\mathcal A(X)$ is a closed two-sided ideal of $\mathcal K(X)$. The quotient  $\mathfrak{A}_X $ is a non-unital Banach algebra 
equipped with the quotient  norm
\[
\Vert S + \mathcal A(X) \Vert = dist(S,\mathcal A(X)), \quad S \in \mathcal K(X).
\]

If $X$ has the approximation property, then it  is well known that  $\mathfrak{A}_X = \{0\}$.
Recall that Banach spaces $X$ failing the approximation property 
are complicated objects to construct or recognise. 
As a consequence the class of compact-by-approximable algebras $\mathfrak{A}_X$  is 
quite intractable, and it has largely  been neglected in the literature. 
Nevertheless, these quotient algebras are natural examples of (typically) non-commutative radical  
Banach algebras, that is, the quotient elements $S + \mathcal A(X)$ are quasi-nilpotent
for all $S \in \mathcal K(X)$.
Recently various facts and problems about such algebras 
were highlighted by Dales \cite{D13}, and his questions motivated the results and examples
 in \cite{TW} about the size of the quotient algebras $\mathfrak{A}_X$
 for classes of Banach spaces $X$.
In this paper we complement and expand our earlier 
study by looking more carefully at the algebraic structure of $\mathfrak{A}_X$, in particular
at examples of non-trivial closed two-sided  ideals. 

\smallskip

If  $X$  fails to have the approximation property and 
$\mathfrak{A}_X \neq \{0\}$, then it is a serious challenge  to construct non-trivial 
closed ideals of $\mathfrak{A}_X$.
In Section \ref{closedids} we provide the first examples of this kind. 
In  Theorem \ref{ideals} we exhibit a class of 
direct sums $X \oplus Y$, where  $\mathfrak{A}_{X\oplus Y}$ has at least two non-trivial, incomparable 
closed ideals.
As a by-product of our discussion we also point out (Corollary \ref{universal})
that the class of non-approximable operators does not contain a universal operator.
In Theorem \ref{nil} we find closed subspaces 
$X \subset \ell^p$ for $p \in [1,\infty)$ and $p \neq 2$, as well as $X \subset c_0$,
for which the quotient algebra $\mathfrak{A}_X$ is non-nilpotent and infinite-dimensional.
This result improves on \cite[section 2]{TW},
and it will also be crucial in some of our later examples 
of closed ideals of the compact-by-approximable algebra.

\smallskip

In Section \ref{compactnona} we systematically discuss the following natural problem: for which 
parameters $p \neq  q$ is it possible to find  closed subspaces $X \subset \ell^p$ and 
$Y \subset \ell^q$, such that
\[
\mathcal A(X,Y) \varsubsetneq \mathcal K(X,Y) \ ?
\]
This is not always the case 
(see Theorem \ref{KMJ}), and our discussion is motivated by  Theorem \ref{ideals}.
The cases  $p,q \in [1,2)$ 
turn out to be related to earlier factorisation results of 
Figiel \cite{F}, Alexander \cite{A} and Bachelis \cite{B} for compact operators. For
$p, q \in (2,\infty)$ we will use non-classical approximation properties with respect to certain Banach operator ideals $\mathcal I$ 
which are contained in the ideal $\mathcal K$ of the compact operators. 
The result (Theorem \ref{reinov}) 
re-examines a sophisticated example of Reinov \cite{Rei82}
about  the failure of duality for $p$-nuclear operators.
The concepts and results in Section \ref{compactnona} enable us to revisit the 
setting of Theorem \ref{ideals}, 
and to exhibit in Example \ref{8ideal} such direct sums $X \oplus Y$,
for which $\mathfrak{A}_{X\oplus Y}$ has (at least) 8  non-trivial closed ideals.
Moreover, in Examples \ref{newideal} and \ref{c0v2} 
we uncover closed subspaces $X \subset \ell^p$ 
for $p \in (4,\infty)$ and $X \subset c_0$, where $\mathfrak{A}_X$
contains a non-trivial closed ideal.

\smallskip

In  Section \ref{quest} we find spaces $X$
for which $\mathcal{CA}(X) \cap \mathcal K(X)$
determines a non-trivial  closed ideal of $\mathfrak{A}_X$, where  
$\mathcal{CA}$ is the class of compactly approximable operators. 
Finally, given any Banach space $X$ such that $X$ has the 
approximation property but $X^*$ fails this property, 
we construct in Example \ref{infty} an associated space $Z$ for which
$\mathfrak{A}_Z$ carries an uncountable family of non-trivial closed ideals
having an explicit order structure. 
We also draw attention to some problems raised by our results. 

\medskip

\textit{Preliminaries.} We briefly recall some  standard concepts that 
will freely be used later. The Banach space $X$ has the approximation property (A.P.)
 if for all compact subsets 
$K \subset X$ and $\varepsilon > 0$ there is a bounded finite-rank operator 
$U \in \mathcal F(X)$  such that 
\begin{equation}\label{ap}
\sup_{x \in K} \Vert x - Ux \Vert < \varepsilon.
\end{equation}
If there is a uniform bound $C < \infty$ such that the approximating operator
$U \in \mathcal F(X)$ in \eqref{ap} can be chosen to satisfy $\Vert U\Vert \le C$, then 
$X$ has the bounded approximation property (B.A.P.). If compact operators 
$U \in \mathcal K(X)$
are allowed in condition \eqref{ap}, then one obtains analogously 
the compact approximation property (C.A.P.) 
and its bounded version B.C.A.P.  For a comprehensive discussion of the
classical approximation properties we refer to
 \cite[1.e and 2.d]{LT1}, \cite[1.g]{LT2}, 
and the survey \cite{C}.
In general,   \cite{AK}, \cite{DJT}  and \cite{LT1} are 
references for unexplained concepts and results related to Banach spaces. 

\smallskip

Let  $\mathcal L(X,Y)$ be the space of bounded linear operators $X \to Y$
for Banach spaces $X$ and $Y$.
We say here that $(\mathcal I, \vert \cdot\vert_\mathcal I)$ is a Banach operator ideal
if $(\mathcal I, \vert \cdot\vert_\mathcal I)$ is a complete normed 
operator ideal in the sense of Pietsch  \cite{Pie}. 
More precisely, this entails that the following conditions 
hold for all Banach spaces $X$ and $Y$:

\begin{itemize}
\item[(BOI1)]
the ideal component $\mathcal I(X,Y)$ is a linear subspace of $\mathcal L(X,Y)$,
and $\vert \cdot\vert_\mathcal I$ is a complete norm in $\mathcal I(X,Y)$
such that $\Vert S\Vert \le \vert S\vert_\mathcal I$ for all $S \in \mathcal I(X,Y)$,

\smallskip

\item[(BOI2)]
the bounded finite-rank operators $\mathcal F(X,Y) \subset \mathcal I(X,Y)$ and 
$\vert x^* \otimes y \vert_\mathcal I = \Vert x^*\Vert \cdot \Vert y\Vert$ for all 
 $x^* \in X^*$ and  $y \in Y$,
 
 \smallskip
 
 \item[(BOI3)] 
 for all Banach spaces $Z$ and $W$ the product  $BSA \in \mathcal I(Z,W)$ for all 
 $S \in \mathcal I(X,Y)$ and all bounded operators $A \in \mathcal L(Z,X)$
 and $B \in \mathcal L(Y,W)$, and in addition
 \begin{equation}\label{id}
 \vert BSA \vert_\mathcal I \le \Vert B\Vert \cdot  \Vert A\Vert  \cdot \vert S \vert_\mathcal I.
 \end{equation}
\end{itemize}

 \smallskip

Above $x^* \otimes y \in \mathcal F(X,Y)$ denotes the operator $x \mapsto x^*(x)y$.  
Banach operator ideals of particular interest in Section \ref{compactnona}
will be the ideal $\mathcal{KS}_r$ of the operators that factor
 compactly through a closed subspace of $\ell^r$, 
 the ideal $\mathcal{SK}_r$ of the (Sinha-Karn) $r$-compact operators, and 
the ideal $\mathcal{QN}_r$ of the quasi $r$-nuclear operators.
We refer to  \cite{Pie}, \cite{DF} and \cite{DJT} as general sources for
classical examples of  operator ideals and for related unexplained concepts 
and constructions.

 \smallskip

Recall that a complex Banach algebra $\mathcal A$ 
is a radical algebra if the spectrum $\sigma(x) = \{0\}$ for all $x \in \mathcal A$, 
where the spectrum is 
computed in the unitisation $\mathcal A^{\#}$ of $\mathcal A$. 
It is known \cite[Theorem 2.5.8(iv)]{D00} that for complex Banach spaces $X$ 
the quotient algebra $\mathfrak{A}_X$ is radical. 
For  real Banach spaces $X$ our interpretation is that
the real quotient algebra $\mathfrak{A}_X$ is radical in the sense that
the real spectrum 
\begin{equation}\label{rrad}
\sigma_{\mathbb R}(S + \mathcal A(X)) =:
\{\lambda \in \mathbb R: \lambda 1 - (S + \mathcal A(X))
\textrm{ is invertible in } (\mathfrak{A}_X)^{\#}\} = \{0\}
\end{equation}
for all $S \in \mathcal K(X)$.
The fact that this holds in the real case is a consequence of  classical 
Riesz-Fredholm theory, which does not depend of the scalar field (see
Proposition \ref{real} for the precise details).
With this understanding
our results and examples are independent of the scalar field 
$\mathbb R$ or $\mathbb C$ of the underlying Banach space $X$.
Recall also from  \cite[Question 2.2.A, page 182]{D00} that it is a longstanding open problem whether there are topologically simple
radical Banach algebras $\mathcal A$, 
that is, $\mathcal A$ has a non-trivial product and no non-trivial closed ideals.
  
\section{Non-trivial closed ideals of $\mathfrak{A}_Z$ and algebraic properties}\label{closedids}

In  this section we construct the first examples of 
non-trivial closed two-sided ideals
in the compact-by-approximable quotient algebra $\mathfrak{A}_Z$ for certain classes of 
Banach spaces $Z$.
We also exhibit closed subspaces $X \subset \ell^p$, 
where $1 \le p < \infty$ and $p \neq 2$, such that the algebra $\mathfrak{A}_X$ is non-nilpotent. 
This result improves \cite[section 2]{TW}, and it will be essential
for some of our later examples of closed ideals. 

\smallskip

Let $Z$ be a Banach space. In view of  Proposition \ref{quotient} below 
we will equivalently exhibit  operator norm closed two-sided (algebraic) 
ideals $\mathcal J$ of $\mathcal K(Z)$,
such that $\mathcal A(Z) \varsubsetneq \mathcal J \varsubsetneq \mathcal K(Z)$. 
Some of the ideals will be defined by internal conditions  
for particular Banach spaces $Z$, in which case the task is to verify that 
the class really is a non-trivial closed ideal of $\mathcal K(Z)$. 
 Banach operator ideals $(\mathcal I, \vert \cdot\vert_\mathcal I)$ 
provide another important source of examples. 
Here $\mathcal I(Z)  =: \mathcal I(Z,Z)$ is a two-sided (algebraic) ideal of $\mathcal L(Z)$, 
but  $\mathcal I(Z)$ is typically not closed in the uniform operator norm. 
 We will reserve the notation $\overline{\mathcal I(X,Y)}$
 for the operator norm closure of $\mathcal I(X,Y)$ in $\mathcal L(X,Y)$ 
 for Banach spaces $X$ and $Y$.
Evidently $\overline{\mathcal I(Z)}$ is a closed (algebraic) ideal of $\mathcal L(Z)$
for every space $Z$.
We emphasize that $\mathcal A(Z) \subset \mathcal I$ 
whenever $\{0\} \neq \mathcal I \subset \mathcal K(Z)$
is a closed (two-sided algebraic) ideal of $\mathcal K(Z)$.
In fact, let
$U \in \mathcal I$ be a non-zero operator, and pick $x \in Z$ and $x^* \in Z^*$ 
such that $\langle Ux,x^* \rangle = 1$.
If $y \in Z$ and $y^* \in Z^*$ are arbitrary, then
\[
(x^* \otimes y) \circ (y^* \otimes Ux) = \langle Ux,x^* \rangle (y^* \otimes y) = 
y^* \otimes y \in \mathcal I,
\]
since $y^* \otimes Ux = U \circ (y^* \otimes x) \in \mathcal I$. 
This implies that $\mathcal A(Z) \subset \mathcal I$.

\smallskip

Let $Z$ be an arbitrary  Banach space and $q: \mathcal K(Z) \to \mathfrak{A}_Z$ 
be the quotient map.  
Our starting point is the easy fact that
there is a bijective correspondence
between the closed non-trivial ideals of $\mathfrak{A}_Z$ and 
the closed non-trivial ideals  of $\mathcal K(Z)$. 
Clearly this is a special instance of a general fact for quotient algebras
$\mathcal A/\mathcal J$, where 
$\mathcal J$ is a closed ideal of the Banach algebra $\mathcal A$.

\begin{prop}\label{quotient}
(i) If $\mathcal A(Z) \varsubsetneq \mathcal I \varsubsetneq \mathcal K(Z)$ is a 
non-trivial closed ideal  of $\mathcal K(Z)$,  then
\[
\{0\} \neq q(\mathcal I) \varsubsetneq  \mathfrak{A}_Z
\]
is a non-trivial closed ideal of  $\mathfrak{A}_Z$. Moreover, if $\mathcal  I_1 \neq \mathcal I_2$, 
then $q(\mathcal  I_1) \neq q(\mathcal  I_2)$.

\smallskip

(ii) If  $\{0\} \neq \mathcal J \varsubsetneq \mathfrak{A}_Z$
is a closed ideal, then 
$\mathcal A(Z) \varsubsetneq q^{-1}(\mathcal J) \varsubsetneq \mathcal K(Z)$ is a 
non-trivial closed ideal of $\mathcal K(Z)$. Moreover,
if $\mathcal  J_1 \neq \mathcal J_2$, then 
$q^{-1}(\mathcal J_1) \neq q^{-1}(\mathcal J_2)$.

\end{prop}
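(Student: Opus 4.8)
The plan is to verify directly that the two assignments $\mathcal I \mapsto q(\mathcal I)$ and $\mathcal J \mapsto q^{-1}(\mathcal J)$ are mutually inverse bijections between the stated families of closed ideals, using only that $q : \mathcal K(Z) \to \mathfrak{A}_Z$ is a continuous surjective algebra homomorphism with $\ker q = \mathcal A(Z)$ and that the quotient norm makes $q$ an open map (equivalently, a quotient map of topological spaces). Nothing here is specific to the compact-by-approximable situation; it is the familiar lattice isomorphism for a quotient by a closed ideal, so I do not expect a genuine obstacle. The only points that require a little care are the closedness of the image $q(\mathcal I)$ and the strictness of the various inclusions, and I will flag those as the delicate steps.

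For part (i): since $\mathcal A(Z) = \ker q \subset \mathcal I$ we have $q^{-1}(q(\mathcal I)) = \mathcal I$, and because $q$ is a quotient map this forces $q(\mathcal I)$ to be closed in $\mathfrak{A}_Z$ exactly because $\mathcal I$ is closed in $\mathcal K(Z)$. As $q$ is a surjective algebra homomorphism, $q(\mathcal I)$ is a two-sided (algebraic) ideal of $\mathfrak{A}_Z$. For non-triviality: $q(\mathcal I) \neq \{0\}$ because $\mathcal I \varsupsetneq \mathcal A(Z) = \ker q$ supplies an operator outside the kernel, while $q(\mathcal I) \neq \mathfrak{A}_Z$ because $q^{-1}(q(\mathcal I)) = \mathcal I \varsubsetneq \mathcal K(Z) = q^{-1}(\mathfrak{A}_Z)$. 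Finally, if $\mathcal I_1 \neq \mathcal I_2$ are two such ideals, applying $q^{-1}$ recovers $q^{-1}(q(\mathcal I_j)) = \mathcal I_j$, whence $q(\mathcal I_1) \neq q(\mathcal I_2)$.

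For part (ii): $q^{-1}(\mathcal J)$ is closed since $q$ is continuous and $\mathcal J$ is closed, and it is a two-sided algebraic ideal of $\mathcal K(Z)$, being the preimage of an ideal under a homomorphism. It contains $\ker q = \mathcal A(Z)$ as $0 \in \mathcal J$, and this inclusion is strict: choosing $S \in \mathcal K(Z)$ with $q(S) \in \mathcal J \setminus \{0\}$ yields $S \in q^{-1}(\mathcal J) \setminus \mathcal A(Z)$. Similarly $q^{-1}(\mathcal J) \varsubsetneq \mathcal K(Z)$, since otherwise surjectivity of $q$ would give $\mathcal J = q(q^{-1}(\mathcal J)) = \mathfrak{A}_Z$. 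The injectivity clause follows from $q(q^{-1}(\mathcal J)) = \mathcal J$, valid because $q$ is onto. Assembling (i) and (ii) shows the two maps invert one another, which gives the bijective correspondence announced before the statement; throughout, the one thing to watch is that every "$\varsubsetneq$" is genuinely strict, and in each case this is settled by exhibiting a concrete operator witnessing the strictness.
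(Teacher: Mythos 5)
Your proposal is correct and follows essentially the same route as the paper: the standard lattice correspondence for a quotient by a closed ideal, with strictness of each inclusion checked via $q^{-1}(q(\mathcal I))=\mathcal I$ and $q(q^{-1}(\mathcal J))=\mathcal J$. The only cosmetic difference is that you obtain closedness of $q(\mathcal I)$ by invoking the openness of the Banach-space quotient map (so that closedness of $q^{-1}(q(\mathcal I))=\mathcal I$ suffices), whereas the paper verifies the same fact by an explicit sequence argument using $\mathcal A(Z)\subset\mathcal I$; both are valid.
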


\begin{proof}
(i) Note  that $q(\mathcal I)$ is an ideal of  $\mathfrak{A}_Z$, 
since $q$ is an algebra homomorphism.
To verify that $q(\mathcal I)$ is closed in $\mathfrak{A}_Z$, suppose that $S \in \mathcal K(Z)$ 
with $q(S) \in \overline{q(\mathcal I)}$. Hence there is a bounded sequence 
$(S_n) \subset \mathcal I$ such that 
\[
\Vert q(S) - q(S_n) \Vert = dist(S-S_n, \mathcal A(Z)) \to 0, \quad n \to \infty.
\]
Pick $V_n \in \mathcal A(Z)$ for $n \in \mathbb N$ so that 
$\Vert S - S_n -V_n\Vert \to 0$ as $n \to \infty$.
Here $S_n + V_n \in \mathcal I$ for each $n$, so that $S \in \overline{\mathcal I} = \mathcal I$.
Clearly $q(\mathcal I)  \varsubsetneq \mathfrak{A}_Z$, since otherwise 
$\mathcal I = q^{-1}(q(\mathcal I)) =   \mathcal K(Z)$.
Moreover, a simple verification shows that if  $\mathcal  I_1 \neq \mathcal I_2$, then 
$q(\mathcal  I_1) \neq q(\mathcal  I_2)$.

\smallskip

(ii)  If $U \in q^{-1}(\mathcal J)$ and $S \in \mathcal K(Z)$, then
$q(SU) = q(S)q(U) \in \mathcal J$, so that  $SU \in q^{-1}(\mathcal J)$. 
Similarly $US \in q^{-1}(\mathcal J)$. Moreover,
$\{0\} \neq \mathcal J  \varsubsetneq \mathfrak{A}_Z$ implies that 
$\mathcal A(Z) \varsubsetneq q^{-1}(\mathcal J) \varsubsetneq \mathcal K(Z)$.
\end{proof}

If the closed ideal $\mathcal J$ of $\mathcal K(Z)$ can be written as
$\mathcal J = \overline{\mathcal I(Z)}$ for some Banach operator ideal $\mathcal I$, then
$\mathcal J$ is an ideal of $\mathcal L(Z)$. However, we will also encounter 
closed ideals $\mathcal J$ of $\mathcal K(Z)$ that fail to be an ideal of $\mathcal L(Z)$, 
see Remark \ref{Lideals}.

\medskip

Our first class of examples contains the result that for $p <  2 < q$ there are closed subspaces
$X \subset \ell^p$ and $Y \subset \ell^q$, such that 
$\mathfrak{A}_{X \oplus Y}$ contains at least two non-trivial  incomparable closed ideals. 
The construction is based on the following theorem,
which combines classical factorisation results of Kwapien and Maurey 
(see \cite[Theorem 3.4 and Corollary 3.6]{P2})  with an observation of John \cite[Lemma 2]{Jo}. 
It will be crucial that the equality \eqref{KA}
below is independent of any approximation properties on $X$ or $Y$.
For other results of this type, see \cite[Theorem 2.2]{Go} and the subsequent comments 
for the case of separable reflexive spaces,  as well as the remark on \cite[p. 248]{DJT}.
We refer e.g. to \cite[section 6.2]{AK} for the notions of type and cotype for Banach spaces.
We will require the facts, see \cite[Theorem 6.2.14]{AK}
that $L^p(\mu)$ (as well as all its closed subspaces) has type 
$2$ for $2 \le p < \infty$, 
and cotype $2$ for $1 \le p \le 2$.

\begin{thm}\label{KMJ}
(Kwapien-Maurey-John) Suppose that $X$ has type $2$ and $Y$ has cotype $2$. Then 
\begin{equation}\label{KA}
\mathcal K(X,Y) = \mathcal A(X,Y).
\end{equation}
\end{thm}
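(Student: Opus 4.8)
The plan is to factor an arbitrary compact operator $T \in \mathcal K(X,Y)$ through a Hilbert space, and then approximate the middle Hilbert-space operator by finite-rank operators. The starting point is the Kwapien--Maurey factorisation: since $X$ has type $2$ and $Y$ has cotype $2$, every bounded operator $T \in \mathcal L(X,Y)$ factors as $T = \beta \alpha$ with $\alpha \in \mathcal L(X,H)$ and $\beta \in \mathcal L(H,Y)$ for some Hilbert space $H$ (this is \cite[Theorem 3.4 and Corollary 3.6]{P2}). The key additional input, due to John \cite[Lemma 2]{Jo}, is that when $T$ is moreover \emph{compact}, the factorisation can be arranged so that one of the factors, say $\beta$, is itself compact (indeed $\beta$ can be taken to factor compactly, or at least one obtains $\alpha$ and $\beta$ with $\beta$ compact). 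The precise bookkeeping here is what makes \eqref{KA} hold with no approximation hypothesis on $X$ or $Y$.

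First I would invoke Kwapien's theorem to write $T = \beta_0 \alpha_0$ through a Hilbert space $H$. Next I would apply John's observation to upgrade this to a factorisation $T = \beta \alpha$ with $\alpha \in \mathcal L(X,H)$, $\beta \in \mathcal K(H,Y)$ (compact). The crucial point is that $\beta$ is a compact operator \emph{from a Hilbert space}: Hilbert spaces have the (metric) approximation property, so $\mathcal K(H,Y) = \mathcal A(H,Y)$ — equivalently, $\beta$ is a uniform limit of finite-rank operators $\beta_n \in \mathcal F(H,Y)$. Then $\beta_n \alpha \in \mathcal F(X,Y)$ for each $n$, and $\|T - \beta_n \alpha\| = \|(\beta - \beta_n)\alpha\| \le \|\beta - \beta_n\|\,\|\alpha\| \to 0$, so $T \in \overline{\mathcal F(X,Y)} = \mathcal A(X,Y)$. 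Since the reverse inclusion $\mathcal A(X,Y) \subset \mathcal K(X,Y)$ is trivial, this gives \eqref{KA}.

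The main obstacle is the passage from Kwapien's abstract factorisation (which only gives \emph{bounded} factors) to a factorisation of the \emph{compact} operator $T$ in which a factor is compact. One cannot simply say ``$\beta_0$ is compact'' — in general neither $\alpha_0$ nor $\beta_0$ need be. The resolution is precisely John's lemma: one modifies the factorisation, typically by composing with a suitable diagonal-type operator on $H$ (absorbing a sequence of eigenvalue-like weights tending to $0$ extracted from the compactness of $T$) so that compactness is transferred onto $\beta$ while $\alpha$ remains bounded. I would state this step carefully, citing \cite[Lemma 2]{Jo}, and then the remainder is the routine approximation argument above using only that $H$ has the approximation property. It is worth emphasising, as the surrounding text does, that the only approximation property used is that of the Hilbert space $H$ in the middle, which is automatic; this is exactly why \eqref{KA} is insensitive to whether $X$ or $Y$ has the approximation property.
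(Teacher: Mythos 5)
There is a genuine gap at the decisive step. You reduce everything to the claim that the Kwapien--Maurey factorisation $T=\beta_0\alpha_0$ of a \emph{compact} $T$ can be upgraded to $T=\beta\alpha$ with $\beta\in\mathcal K(H,Y)$, and you attribute this upgrade to John's Lemma 2. That is not what John's lemma says: \cite[Lemma 2]{Jo} is precisely the statement that a compact operator which factors (boundedly) through a Hilbert space is approximable --- it is the conclusion of the theorem, not a factorisation-improvement device. More importantly, the mechanism you sketch for the upgrade, namely composing with a diagonal-type operator on $H$ that absorbs ``eigenvalue-like weights tending to $0$ extracted from the compactness of $T$'', does not get off the ground: the compactness of $T=BA$ produces a relatively compact set $T(B_X)$ in $Y$, not in $H$. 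The set $A(B_X)\subset H$ is merely bounded, so there is no null sequence of weights on the Hilbert-space side to absorb into a diagonal operator. (The trick you have in mind does work when a compact operator maps \emph{into} a Hilbert space, where one can dominate the compact image set $\overline{A(B_X)}$ by $D(B_H)$ for a compact injective diagonal $D$ and write $A=D(D^{-1}A)$; here neither factor is compact and no compact set in $H$ is available.) Whether a compact operator in $\Gamma_2$ always factors through a Hilbert space with one factor compact is exactly the difficulty your plan sweeps under the rug, and you have not proved it.

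The paper's proof takes a different route that avoids producing a compact factor altogether. After reducing to separable $H$, one takes the finite-rank orthogonal projections $P_n$ onto the spans of an orthonormal basis and shows that $BP_nA\to T$ \emph{weakly} in the Banach space $\mathcal K(X,Y)$; this is where the compactness of $T$ enters, via Kalton's characterisation of weak convergence in spaces of compact operators \cite{K}. Mazur's theorem then yields finite convex combinations of the finite-rank operators $BP_nA$ converging to $T$ in norm, whence $T\in\mathcal A(X,Y)$. Your final approximation step (using that $\mathcal K(H,Y)=\mathcal A(H,Y)$ because $H$ has the approximation property) is fine as far as it goes, but to make the argument complete you must either supply an actual proof of the compact-factor upgrade or replace it by a weak-convergence-plus-Mazur argument of the above type.
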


\begin{proof}
Since the argument combines two results of different nature we review the relevant ideas for 
completeness.

Suppose that $T \in \mathcal K(X,Y)$ is arbitrary. 
It follows from the classical factorisation results of 
Kwapien and Maurey, see \cite[Corollary 3.6]{P2}, 
that there is a Hilbert space $H$ and bounded operators $A \in \mathcal L(X,H)$,
$B \in \mathcal L(H,Y)$ such that $T = BA$. 

Suppose first that $H$ is separable and let $(P_n)$ 
be the sequence of orthogonal projections
from $H$ onto the linear span $[e_k: 1 \le k \le n]$, where 
$(e_n)$  is a fixed orthonormal basis of $H$. Observe that
\[
\langle  (BP_nA)^{**}x^{**},y^*\rangle = \langle A^{**}x^{**},P^*_nB^*y^* \rangle \to \langle A^{**}x^{**},B^*y^* \rangle = \langle T^{**}x^{**},y^*\rangle
\]
as $n \to \infty$ for any $x^{**} \in X^{**}$ and $y^* \in Y^*$, because
$\Vert P^*_nB^*y^* - B^*y^*\Vert \to 0$ as $n \to \infty$. Since $T$ is a compact operator
this means that  $BP_nA \to T$ weakly in 
$\mathcal K(X,Y)$ as $n \to \infty$, see e.g. \cite[Corollary 3]{K}.  
By Mazur's theorem, for any given $\varepsilon > 0$  there is a finite convex combination 
$\sum_{n = p}^q \lambda_n BP_nA \in \mathcal F(X,Y)$ such that 
\[
\Vert T - \sum_{n = p}^q \lambda_n BP_nA\Vert < \varepsilon.
\]
Hence $T$ is an approximable operator. 

The general case reduces to the separable case by the following elementary observation. 
(Alternatively, one may apply Remark 3 from \cite[page 512]{Jo}, 
but that result depends on more sophisticated facts.)
Namely, if the compact operator $T = BA \in \mathcal K(X,Y)$ factors through a Hilbert
space $H$, then we may actually factor $T$ as  $T = B_0A_0$ through 
a closed separable subspace $H_1$ of $H$. 
Indeed, it suffices to show that $T$ is approximable considered as an operator 
$X \to \overline{T(X)}$, where 
$\overline{TX} \subset Y$ is a separable subspace, since $\overline{TB_X}$ is separable
by the compactness of $T$. Hence we may also suppose that $Y$ is separable.
We first factor $B = \widehat{B}P$ through $H_1 =: Ker(B)^{\perp}$, where $P$
is the orthogonal projection of $H$ onto $H_1$ and $\widehat{B} = B_{| Ker(B)^{\perp}}$. 
Thus $T = B_0A_0$, where $B_0 = \widehat{B}$ is an injective operator 
$H_1 \to Y$ and $A_0 = PA$.
Finally,  it follows that  $H_1$ is separable from
the general fact stated separately in Lemma \ref{separable} below.
\end{proof}

Above we applied the following general observation, which we include for 
completeness.

\begin{lma}\label{separable}
Let $Z$ and $W$ be Banach spaces such that $Z$ is reflexive and $W$ is separable.
If there is a bounded linear injection $S: Z \to W$, 
then $Z$ is a separable space.
\end{lma}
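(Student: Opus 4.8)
The plan is to use the reflexivity of $Z$ to pull back weak convergence through $S$, and then exploit separability of $W$ to find a countable weakly dense subset of $Z$, which in a reflexive (hence weakly compactly generated, in fact every bounded set is weakly metrizable on separable subspaces) setting forces norm separability. Concretely, I would argue as follows.

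First, observe that since $Z$ is reflexive, its closed unit ball $B_Z$ is weakly compact, and $S$ is weak-to-weak continuous (being bounded linear), so $S(B_Z) \subset W$ is a weakly compact subset of the separable space $W$. On a separable Banach space the weak topology restricted to any weakly compact set is metrizable (the dual ball of $W^*$ contains a countable weak$^*$-dense set since $W$ is separable, and these functionals separate points and metrize the weak topology on weakly compact sets). Hence $S(B_Z)$ is weakly metrizable, and therefore so is $B_Z$ itself: indeed $S$ is injective, so $S\colon (B_Z, \text{weak}) \to (S(B_Z), \text{weak})$ is a continuous bijection from a compact space to a Hausdorff space, hence a homeomorphism. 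Thus $(B_Z, \text{weak})$ is a compact metrizable space, in particular separable in the weak topology; pick a countable weakly dense subset $D \subset B_Z$.

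Next, I would upgrade weak density of $D$ to norm density of $\overline{\mathrm{span}}(D)$. Let $Z_0 = \overline{\mathrm{span}}(D)$, a closed separable subspace of $Z$. If $Z_0 \neq Z$, by Hahn--Banach there is $z^* \in Z^*$, $z^* \neq 0$, with $z^*|_{Z_0} = 0$; but then $z^*$ vanishes on $D$, and since $D$ is weakly dense in $B_Z$ and $z^*$ is weakly continuous, $z^*$ vanishes on $B_Z$, hence $z^* = 0$, a contradiction. Therefore $Z = Z_0$ is separable. (Equivalently: $B_Z$ weakly separable implies $Z$ norm separable, since the weak closure of a convex set equals its norm closure.)

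The only genuine point requiring care — the \emph{main obstacle} in spirit, though it is standard — is the metrizability claim: that the weak topology on a weakly compact subset of a separable Banach space is metrizable. I would justify it by taking a countable norm-dense set $\{w_n\} \subset W$, using Hahn--Banach to choose $w_n^* \in W^*$ with $\|w_n^*\| = 1$ and $w_n^*(w_n) = \|w_n\|$ (or simply a countable norming set of functionals), and noting that the $w_n^*$ separate points of $W$ and that $d(w, w') = \sum_n 2^{-n} |w_n^*(w - w')|$ induces on any bounded weakly compact set a topology coarser than, hence (by compactness) equal to, the weak topology. Everything else is a routine chain: reflexive $\Rightarrow$ $B_Z$ weakly compact; $S$ injective and weak-weak continuous $\Rightarrow$ homeomorphism onto its image; weak separability of $B_Z$ $\Rightarrow$ norm separability of $Z$ via Hahn--Banach.
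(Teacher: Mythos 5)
Your proof is correct, but it follows a genuinely different route from the paper's. The paper first uses the separability of $W$ to embed it isometrically into $\ell^\infty$ and then composes with an injective diagonal operator $\ell^\infty \to \ell^2$, producing a bounded injection $U: Z \to \ell^2$; injectivity of $U$ makes the range of $U^*$ weak$^*$-dense in $Z^*$, and reflexivity (weak$^*$ $=$ weak on $Z^*$) together with Mazur's theorem upgrades this to norm density, so $Z^*$ inherits separability from $\ell^2$ and hence $Z$ is separable. You instead work on the predual side: reflexivity gives weak compactness of $B_Z$, separability of $W$ gives a countable separating family of functionals and hence metrizability of the weak topology on the weakly compact set $S(B_Z)$, and the compact-to-Hausdorff homeomorphism argument transports this back to $(B_Z,\mathrm{weak})$, after which Hahn--Banach (equivalently Mazur) converts weak separability of $B_Z$ into norm separability of $Z$. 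Your version avoids the detour through $\ell^2$ and its adjoint at the cost of invoking the metrizability lemma, which you correctly identify as the one step needing justification and which you justify adequately; the paper's version trades that topological input for the classical embedding $W \hookrightarrow \ell^\infty$ and the diagonal trick. Both arguments use reflexivity and a Mazur-type passage from weak to norm closure in an essential way, so they are comparable in depth, and either is acceptable here.
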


\begin{proof}
Fix an isometric embedding
$J: W \to \ell^\infty$ and let $D: \ell^\infty \to \ell^2$ be the injective diagonal operator
$(x_n) \mapsto (a_nx_n)$, where $(a_n) \in \ell^2$ and $a_n \neq 0$ for all $n$. 
Then $U = DJS$ is a bounded linear injection $Z \to \ell^2$. 
Since $Z$ is reflexive, the range $U^*(\ell^2)$
is norm-dense in $Z^*$ by the Hahn-Banach and the Mazur theorems, 
so that $Z^*$ is separable.
\end{proof}

\smallskip

As a brief digression we note that the result of John \cite{Jo} used in Theorem \ref{KMJ} 
also yields that there are no
$\mathcal A^c$-universal operators for the class $\mathcal A^c$ of the 
non-approximable operators, which was mentioned as a problem in 
\cite[2.1]{BC}. Let $\mathcal I$ be a Banach operator ideal. 
Recall from \cite[1.12]{DJP} that the operator 
$U \in \mathcal L(X,Y)$, where $U \notin \mathcal I(X,Y)$,
is $\mathcal I^c$-universal
if for every Banach space $Z$, $W$ and 
for every bounded operator 
$V \in  \mathcal L(Z,W) \setminus  \mathcal I(Z,W)$ 
there are bounded operators $A \in \mathcal L(X,Z)$
and $B \in \mathcal L(W,Y)$ such that $U = BVA$. For instance, Johnson \cite{Johnson71}
showed that the canonical inclusion $J: \ell^1 \to \ell^\infty$ is a $\mathcal K^c$-universal
operator. We refer to the recent paper by Beanland
and Causey \cite{BC} for a systematic study of universal factoring operators 
for various classes of operators.

\begin{cor}\label{universal}
There are no  $\mathcal A^c$-universal operators.
\end{cor}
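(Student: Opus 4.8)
The plan is to argue by contradiction: assume that $U \in \mathcal L(X,Y) \setminus \mathcal A(X,Y)$ is an $\mathcal A^c$-universal operator, and reach a contradiction by testing the universality property against two carefully chosen non-approximable operators. The first test operator will force $U$ to be \emph{compact}, and the second will force $U$ to \emph{factor through a Hilbert space}; the combination is then impossible in view of the argument already carried out in the proof of Theorem \ref{KMJ}.

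First I would recall that there exist Banach spaces failing the approximation property (this is the setting of the whole paper; cf. \cite{C}, \cite{LT1}), so that there is a non-approximable compact operator $V_0 \in \mathcal K(Z_0,W_0)$ for suitable Banach spaces $Z_0,W_0$. Since $U$ is $\mathcal A^c$-universal and $V_0 \in \mathcal L(Z_0,W_0) \setminus \mathcal A(Z_0,W_0)$, there are bounded operators $A_0 \in \mathcal L(X,Z_0)$ and $B_0 \in \mathcal L(W_0,Y)$ with $U = B_0 V_0 A_0$. Because $\mathcal K$ is an operator ideal, it follows that $U = B_0 V_0 A_0 \in \mathcal K(X,Y)$, i.e. $U$ is compact. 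This step is the key point to get right, since an $\mathcal A^c$-universal operator is a priori only required to be non-approximable, not compact, and John's argument below applies to compact operators only.

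Next I would apply the universality property to the identity operator $I_{\ell^2} \colon \ell^2 \to \ell^2$, which is non-approximable: if $I_{\ell^2} \in \overline{\mathcal F(\ell^2)}$, then there would be $F \in \mathcal F(\ell^2)$ with $\Vert I_{\ell^2} - F\Vert < 1$, forcing $F$ to be invertible, which is absurd for a finite-rank operator on the infinite-dimensional space $\ell^2$. Hence there are bounded operators $A_1 \in \mathcal L(X,\ell^2)$ and $B_1 \in \mathcal L(\ell^2,Y)$ with $U = B_1 I_{\ell^2} A_1 = B_1 A_1$, so the compact operator $U$ factors through the Hilbert space $\ell^2$.

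Finally I would invoke the fact that a compact operator factoring through a Hilbert space is approximable; this is John's observation \cite[Lemma 2]{Jo}, and it is precisely the portion of the proof of Theorem \ref{KMJ} that is independent of the type/cotype hypotheses (the reduction to a separable Hilbert space via Lemma \ref{separable}, followed by the weak convergence $B_1 P_n A_1 \to U$ in $\mathcal K(X,Y)$ and Mazur's theorem). Applying this to $U = B_1 A_1$ yields $U \in \mathcal A(X,Y)$, contradicting the assumption $U \notin \mathcal A(X,Y)$. Therefore no $\mathcal A^c$-universal operator exists.
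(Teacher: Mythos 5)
Your proof is correct and follows essentially the same route as the paper: factor $U$ through a compact non-approximable operator to get compactness, factor it through $I_{\ell^2}$ to get a Hilbert space factorisation, and conclude via John's Lemma 2 that $U$ would be approximable. The extra details you supply (non-approximability of $I_{\ell^2}$, the separability reduction) are fine but not needed beyond what the paper already records.
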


\begin{proof}
Suppose to the contrary that $U \in \mathcal L(X,Y)$ is a $\mathcal A^c$-universal  
operator. Fix a compact non-approximable operator  
$V \in \mathcal K(Z,W) \setminus \mathcal A(Z,W)$ 
for suitable Banach spaces $Z$ and $W$. 
By assumption there are bounded operators $A$ and $B$ for which 
$U = BVA$, so that $U \in \mathcal K(X,Y)$. Moreover, by assumption  
$U$ also factors through 
the identity $I_{\ell^2}$, so that $U$ is a compact operator that factors through
a Hilbert space.  It follows from \cite[Lemma 2]{Jo} 
that $U \in \mathcal A(X,Y)$, which is not possible.
\end{proof}

\smallskip

We will in the sequel several times require various classical examples 
related to the existence of closed subspaces of $\ell^p$-spaces that fail the A.P.  
To avoid repetition we 
collect these results here, together with their sources, for convenient reference.

\begin{facts}\label{ap1}
Let $1 \le p < \infty$ and $p \neq 2$. Then

(i) there are closed subspaces $X \subset \ell^p$ and $X \subset c_0$ 
that fail the A.P.,  and

(ii) there are closed subspaces $Z \subset \ell^p$ and $Z \subset c_0$ 
such that $\mathcal A(Z) \varsubsetneq  \mathcal K(Z)$.
\end{facts}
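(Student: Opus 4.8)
The plan is to collect the two standard constructions that underlie Facts~\ref{ap1} and record precise citations, since both statements are classical. For part~(i), the first examples of closed subspaces of $\ell^p$ (and of $c_0$) failing the A.P.\ go back to the work surrounding Enflo's solution of the approximation problem; I would cite the Davie--Enflo type subspaces of $\ell^p$ for $p \neq 2$ from \cite[Theorem 2.d.6]{LT1} and Szankowski's results, together with the classical reference \cite[1.e, 2.d]{LT1} and the survey \cite{C}. Concretely, one knows that $\ell^p$ for $p \in [1,\infty) \setminus \{2\}$ contains a closed subspace without the A.P., and the same construction (or a direct transfer argument using the fact that $c_0$ contains isometric copies of the finite-dimensional building blocks) yields a closed subspace of $c_0$ without the A.P. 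I would therefore state part~(i) as essentially a quotation, giving the exact theorem numbers in \cite{LT1} and the relevant paper of Szankowski.

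For part~(ii), the plan is to \emph{deduce} it from part~(i) rather than to invoke a separate construction. The key mechanism is the classical fact (essentially Grothendieck's characterisation of the A.P.) that a Banach space $Z$ has the A.P.\ if and only if $\mathcal F(W,Z)$ is dense in $\mathcal K(W,Z)$ for every Banach space $W$; equivalently, failure of the A.P.\ for $Z$ produces some space $W$ and a compact operator $W \to Z$ that is not approximable. To get a non-approximable compact \emph{endomorphism}, one uses the self-duality available in the $\ell^p$ (and $c_0$) setting together with a ``diagonal'' trick: if $Z \subset \ell^p$ fails the A.P., then by a result of Lindenstrauss--Tzafriri (see \cite[Theorem 1.e.5 and the discussion of C.A.P./A.P.\ for subspaces of $\ell^p$]{LT1}) one may also arrange that $Z$ fails the A.P.\ in such a way that $Z$ itself carries a compact non-approximable operator; alternatively one embeds $Z \hookrightarrow \ell^p$ and uses that $\ell^p \cong (\sum \oplus Z_n)_{\ell^p}$ for a suitable sequence of finite-dimensional pieces, so that a compact non-approximable operator into $Z$ can be amplified to a compact non-approximable operator on a space of the same isomorphic type sitting inside $\ell^p$. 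I would phrase this step carefully and cite \cite[section 2]{TW}, where exactly such subspaces $Z \subset \ell^p$ and $Z \subset c_0$ with $\mathcal A(Z) \subsetneq \mathcal K(Z)$ are produced; this makes part~(ii) a clean reference to our earlier work.

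The main obstacle is purely expository: ensuring that the passage from ``some Banach space fails the A.P.'' to ``a specific subspace of $\ell^p$ carries a compact non-approximable \emph{self}-map'' is stated with the correct hypotheses and attributions, because the naive implication (failure of A.P.\ $\Rightarrow$ $\mathcal A(Z) \subsetneq \mathcal K(Z)$) is \emph{false} in general --- one really needs the extra structure of $\ell^p$-subspaces (or the refined constructions of Szankowski and of Lindenstrauss--Tzafriri) for this to hold. So the proof of Facts~\ref{ap1} will consist of: (a) quoting the existence of A.P.-failing subspaces of $\ell^p$ and $c_0$ from \cite{LT1} and the primary sources; and (b) quoting from \cite[section 2]{TW} that these (or suitable variants) can be taken to satisfy $\mathcal A(Z) \subsetneq \mathcal K(Z)$, with a one-line indication of the amplification argument. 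No genuinely new calculation is required; the content is in assembling the right citations and flagging the gap between (i) and (ii).
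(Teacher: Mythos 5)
Part (i) of your proposal matches the paper: it is a straight quotation of Enflo (for $c_0$), Davie (for $\ell^p$, $2<p<\infty$) and Szankowski (for $\ell^p$, $1\le p<2$), with \cite{LT1}, \cite{LT2} and \cite{Pie} as expository sources. For part (ii) your overall strategy (deduce it from (i), and fall back on a citation) is sound, and the citation of \cite[section 2]{TW} does legitimately cover the claim, but the mechanisms you sketch for passing from (i) to (ii) are not the ones that actually work, and one of them is misattributed. The paper's route is: by Grothendieck's criterion (\cite[Theorem 1.e.4]{LT1}) the failure of the A.P.\ for $X\subset\ell^p$ yields a compact non-approximable operator $T\colon Y\to X$ with $Y$ an \emph{arbitrary} Banach space; the whole point is then to replace $Y$ by a closed subspace of $\ell^p$, and this is exactly what the compact factorisation theorems of Figiel \cite[Theorem 7.4]{F} and Bachelis \cite[Theorem 2']{B} (and Terzio\u{g}lu \cite{T} for the $c_0$ case) provide: $T=BA$ with $A\in\mathcal K(Y,Z_0)$, $B\in\mathcal K(Z_0,X)$ and $Z_0\subset\ell^p$ closed, whence $B$ is non-approximable and the operator $U(x,y)=(0,Bx)$ on $Z=Z_0\oplus X\subset\ell^p$ witnesses $\mathcal A(Z)\varsubsetneq\mathcal K(Z)$. (Alexander \cite{A} gives a direct construction for $2<p<\infty$.) Your ``amplification'' via $\ell^p\cong(\sum\oplus Z_n)_{\ell^p}$ does not address the arbitrary domain $Y$, and \cite[Theorem 1.e.5]{LT1} does not assert that an A.P.-failing subspace of $\ell^p$ can be arranged to carry a non-approximable compact \emph{self}-map; so if you drop the citation to \cite{TW} your argument has a genuine hole at precisely the point you yourself flag as the main obstacle. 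The fix is to invoke the compact factorisation theorems together with the direct-sum trick above.
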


For part (i) recall that the first examples of closed subspaces $X$ failing the A.P. 
were constructed by Enflo \cite{E} for $c_0$, 
by Davie \cite{Da73} for $\ell^p$ and  $2 < p < \infty$, and
by Szankowski \cite{Sz78} for $\ell^p$ and $1 \le p < 2$.
We also refer to \cite[Section 2.d]{LT1}, \cite[Section 1.g]{LT2} and \cite[Section 10.4]{Pie}
for systematic expositions.

Concerning (ii) Alexander \cite{A} found closed subspaces $Z \subset \ell^p$ 
for $2 < p < \infty$ such that 
$\mathcal A(Z) \varsubsetneq  \mathcal K(Z)$, 
and she observed that similar examples can be deduced from 
the compact factorisation result \cite[Theorem 7.4]{F} of Figiel. 
Moreover, a general result of Bachelis \cite[Theorem 2']{B} immediately implies (ii), 
once the examples in (i) are known. 

We point out that in part (ii) the case $Z \subset c_0$ can also be deduced from earlier 
factorisation results. In fact, if the closed subspace $X \subset c_0$ fails to have the A.P.,
then by \cite[Theorem 1.e.4]{LT1} there is a Banach space $Y$ and 
an operator $T \in \mathcal K(Y,X) \setminus \mathcal A(Y,X)$.
By a compact factorisation theorem of Terzio\u{g}lu \cite{T} (see also Randtke \cite{R})
we may factor $T$ compactly through a closed 
subspace of $c_0$, that is, there is a closed subspace $Z_0 \subset c_0$ and 
$A \in \mathcal K(Y,Z_0)$, $B \in \mathcal K(Z_0,X)$ such that $T = BA$.
Consider $Z = Z_0 \oplus X \subset c_0$ and define $U \in \mathcal L(Z)$ by 
\[
U(x,y) = (0,Bx), \quad (x,y) \in Z.
\]
 It follows that 
$U \in \mathcal K(Z) \setminus \mathcal A(Z)$,
since $B$ cannot be an approximable operator.

\medskip

We will often use the operator matrix notation
\[
U = \left( \begin{array}{ccc}
U_{11} & U_{12}\\
U_{21} & U_{22}\\
\end{array} \right)
\]
for bounded operators $U \in \mathcal L(X\oplus Y)$ on direct sums $X \oplus Y$.
The alternative notation $(U)_{i,j} = U_{ij}$ for  $U \in \mathcal L(X\oplus Y)$ and
$i, j = 1,2$ will also be used where more appropriate.
The operator ideal property of $\mathcal K$ implies that
$U \in \mathcal K(X\oplus Y)$ if and only if each component operator
$U_{ij} \in \mathcal K$, and a similar fact holds for the components
of $U \in \mathcal I(X\oplus Y)$ for any Banach operator ideal  $\mathcal I$.
Let $\mathcal I_{11} \subset \mathcal K(X)$, $\mathcal I_{12} \subset \mathcal K(Y,X)$,
$\mathcal I_{21} \subset \mathcal K(X,Y)$ and $\mathcal I_{22}  \subset \mathcal K(Y)$
be given classes of operators.
We introduce the convenient notation 
\[
\left( \begin{array}{ccc}
\mathcal I_{11} & \mathcal I_{12} \\
\mathcal I_{21}  & \mathcal I_{22} \\
\end{array} \right) =: \big \{ U = \left( \begin{array}{ccc}
U_{11} & U_{12}\\
U_{21} & U_{22}\\
\end{array} \right) \in \mathcal K(X \oplus Y):   U_{ij} \in \mathcal I_{ij} \textrm{ for } 
i, j = 1, 2 \big \},
\]
for the resulting class of compact operators on $X \oplus Y$. 

We proceed to construct a class of direct sums $Z = X \oplus Y$, for which
$\mathfrak{A}_Z$ admits non-trivial closed ideals.
The fact  that the quotient  algebra $\mathfrak{A}_{X\oplus Y}$
has a lower triangular form in
the operator matrix representation will play a crucial role. There is an 
analogy with the classical result that  the Banach algebra
$\mathcal L(\ell^p \oplus \ell^q)$ contains two 
incomparable maximal closed ideals for $p < q$, see e.g. \cite[5.3.2]{Pie}.
 
\begin{thm}\label{ideals} 
Suppose that $X$ and $Y$ are Banach spaces such that 
$X$ has cotype $2$, $Y$ has type $2$, as well as  $\mathcal A(X) \varsubsetneq \mathcal K(X)$
and  $\mathcal A(Y) \varsubsetneq \mathcal K(Y)$. 
Let
\[
\mathcal I =: \left( \begin{array}{ccc}
\mathcal K(X) & \mathcal A(Y,X) \\
\mathcal K(X,Y)  &  \mathcal A(Y) \\
\end{array} \right)  \textrm{ and } 
\mathcal J =:  \left( \begin{array}{ccc}
\mathcal A(X) & \mathcal A(Y,X) \\
\mathcal K(X,Y)  &  \mathcal K(Y) \\
\end{array} \right),
\]
where  $\mathcal K(Y,X) = \mathcal A(Y,X)$  in view of  Theorem \ref{KMJ}.

Then $\mathcal I$ and $\mathcal J$ are non-trivial incomparable 
closed ideals of $\mathcal K(X\oplus Y)$, 
and $q(\mathcal I)$ and $q(\mathcal J)$ are non-trivial incomparable 
closed ideals of  $\mathfrak{A}_{X\oplus Y}$. 
In particular, for  $1 \le p <  2 < q < \infty$ there are closed subspaces 
$X \subset \ell^p$ and $Y \subset \ell^q$ for which $\mathfrak{A}_{X\oplus Y}$
contains (at least) two non-trivial incomparable closed ideals. 
\end{thm}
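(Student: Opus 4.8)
The plan is to verify directly that $\mathcal I$ and $\mathcal J$ are closed two-sided algebraic ideals of $\mathcal K(X\oplus Y)$ strictly between $\mathcal A(X\oplus Y)$ and $\mathcal K(X\oplus Y)$, that they are incomparable, and then to invoke Proposition \ref{quotient} to transfer these facts to $\mathfrak A_{X\oplus Y}$. The final sentence then follows by choosing, via Facts \ref{ap1}(ii), closed subspaces $X\subset\ell^p$ and $Y\subset\ell^q$ with $\mathcal A(X)\varsubsetneq\mathcal K(X)$ and $\mathcal A(Y)\varsubsetneq\mathcal K(Y)$; since $X$ has cotype $2$ and $Y$ has type $2$ (as closed subspaces of $\ell^p$, $\ell^q$ with $p<2<q$), the hypotheses of the theorem, including the equality $\mathcal K(Y,X)=\mathcal A(Y,X)$ coming from Theorem \ref{KMJ}, are all met.

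First I would record why $\mathcal I$ and $\mathcal J$ are algebraic ideals of $\mathcal K(X\oplus Y)$. This is a block-matrix computation: writing a general $U\in\mathcal K(X\oplus Y)$ and a general element of $\mathcal I$ (resp. $\mathcal J$) in the operator matrix form introduced above, one multiplies and checks entry by entry that the product again lies in $\mathcal I$ (resp. $\mathcal J$). The key structural point is that $\mathcal A$ is an operator ideal, so e.g. $\mathcal A(Y,X)\circ\mathcal K(X)\subset\mathcal A(Y,X)$, $\mathcal K(X,Y)\circ\mathcal A(X)\subset\mathcal A(X,Y)$, and crucially $\mathcal K(X,Y)\circ\mathcal A(Y,X)\subset\mathcal A(X)$ and $\mathcal A(Y,X)\circ\mathcal K(X,Y)\subset\mathcal A(Y)$ — these last two inclusions are what make the lower-triangular-type patterns of $\mathcal I$ and $\mathcal J$ closed under multiplication. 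One checks both left and right multiplication; it is a finite and routine verification. Closedness in the operator norm is immediate since each prescribed entry-class ($\mathcal K$ or $\mathcal A$) is norm-closed and convergence in $\mathcal K(X\oplus Y)$ is equivalent to entrywise convergence.

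Next, the non-triviality and incomparability. We have $\mathcal A(X\oplus Y)\subset\mathcal I$ and $\mathcal A(X\oplus Y)\subset\mathcal J$ (the $(1,1)$ and $(2,2)$ entries of an approximable operator are approximable, and the off-diagonal entries are unrestricted in both $\mathcal I$ and $\mathcal J$, noting $\mathcal K(Y,X)=\mathcal A(Y,X)$). The strict inclusions and incomparability come from the two witnessing operators: pick $S\in\mathcal K(X)\setminus\mathcal A(X)$ and $T\in\mathcal K(Y)\setminus\mathcal A(Y)$, and consider $\mathrm{diag}(S,0)$ and $\mathrm{diag}(0,T)$ in $\mathcal K(X\oplus Y)$. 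Then $\mathrm{diag}(S,0)\in\mathcal I\setminus\mathcal J$ (its $(1,1)$-entry is non-approximable, so it is not in $\mathcal J$; but $\mathcal I$ allows an arbitrary compact $(1,1)$-entry), and symmetrically $\mathrm{diag}(0,T)\in\mathcal J\setminus\mathcal I$. This simultaneously shows $\mathcal A(X\oplus Y)\varsubsetneq\mathcal I\varsubsetneq\mathcal K(X\oplus Y)$ and $\mathcal A(X\oplus Y)\varsubsetneq\mathcal J\varsubsetneq\mathcal K(X\oplus Y)$ and that neither of $\mathcal I$, $\mathcal J$ contains the other. Finally, Proposition \ref{quotient}(i) upgrades $\mathcal I$ and $\mathcal J$ to non-trivial closed ideals $q(\mathcal I)$, $q(\mathcal J)$ of $\mathfrak A_{X\oplus Y}$, and since $\mathcal I\neq\mathcal J$ and neither contains the other, the same holds for $q(\mathcal I)$ and $q(\mathcal J)$ by the injectivity and order-preservation in that Proposition.

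The only genuinely delicate ingredient is already isolated outside this argument, namely the unconditional equality $\mathcal K(Y,X)=\mathcal A(Y,X)$ from Theorem \ref{KMJ}, which is exactly what forces the off-diagonal $(1,2)$-block to behave well and is what makes $\mathcal I$ and $\mathcal J$ genuine ideals rather than mere subspaces; the rest is bookkeeping with operator-matrix products and the closure properties of $\mathcal A$. I do not anticipate any obstacle beyond organizing the block computations cleanly.
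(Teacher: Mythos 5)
Your proposal is correct and follows the same overall architecture as the paper: block-matrix verification of the ideal property, the diagonal witnesses $\mathrm{diag}(S,0)$ and $\mathrm{diag}(0,T)$ for strictness and incomparability, and Proposition \ref{quotient} to pass to $\mathfrak{A}_{X\oplus Y}$. The one substantive difference is in the multiplicative step. You verify the ideal property only against $V\in\mathcal K(X\oplus Y)$, which is all the theorem asserts; there the $(1,2)$-entry $V_{12}$ is itself compact, so $V_{12}\in\mathcal K(Y,X)=\mathcal A(Y,X)$ by Theorem \ref{KMJ}, and every problematic product (e.g.\ $U_{21}V_{12}$ in $(UV)_{2,2}$) is approximable by the two-sided ideal property of $\mathcal A$ alone. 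The paper proves the stronger statement that $\mathcal I$ and $\mathcal J$ are closed ideals of all of $\mathcal L(X\oplus Y)$; for a merely bounded $V$ the entry $V_{12}$ need not be compact, so the paper instead factors $V_{12}=BA$ through a Hilbert space via Kwapie\'n--Maurey and uses $U_{21}B\in\mathcal K(H,Y)=\mathcal A(H,Y)$. Your route is thus slightly more economical for the stated theorem, at the price of not obtaining the $\mathcal L$-ideal property (which the paper does not need for this result either). One small slip: the displayed inclusions should read $\mathcal K(X,Y)\circ\mathcal A(Y,X)\subset\mathcal A(Y)$ and $\mathcal A(Y,X)\circ\mathcal K(X,Y)\subset\mathcal A(X)$ --- the codomains are interchanged in your text, though the argument you actually run uses the correct versions.
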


\begin{proof}
We show that $\mathcal I$ and $\mathcal J$  
are actually closed ideals of $\mathcal L(X\oplus Y)$.
Clearly $\mathcal I$ and $\mathcal J$ are closed subspaces of 
$\mathcal K(X\oplus Y)$.

Let $U =  \left( \begin{array}{ccc}
U_{11} & U_{12}\\
U_{21} & U_{22}\\
\end{array} \right), V =  \left( \begin{array}{ccc}
V_{11} & V_{12}\\
V_{21} & V_{22}\\
\end{array} \right) \in \mathcal L(X\oplus Y)$, so that
\begin{equation}\label{prod}
UV = 
 \left( \begin{array}{ccc}
U_{11}V_ {11} + U_{12}V_{21}& U_{11}V_{12} + U_{12}V_{22}\\
U_{21}V_ {11} + U_{22}V_{21} &  U_{21}V_ {12} + U_{22}V_{22}\\
\end{array} \right).
\end{equation}

We claim that $UV \in \mathcal I$ and $VU \in \mathcal I$
for arbitrary $U \in  \mathcal I$ and $V \in \mathcal L(X\oplus Y)$.
Note first that both $UV$ and $VU$ belong to  $\mathcal K(X\oplus Y)$, 
so that by Theorem \ref{KMJ} the component 
\[
(UV)_{1,2} \in \mathcal K(Y,X) = \mathcal A(Y,X),
\]
and similarly $(VU)_{1,2} \in  \mathcal A(Y,X)$.
It remains to verify that the
components $(UV)_{2,2}  \in \mathcal A(Y)$ and $(VU)_{2,2} \in \mathcal A(Y)$.  
By \eqref{prod} we know that
\[
(UV)_{2,2} = U_{21}V_{12} + U_{22}V_{22}.
\]
In view of the factorisation theorem of Kwapien and Maurey 
(see \cite[Theorem 3.4 and Corollary 3.6]{P2}) we may factor $V_{12} = BA$, where 
$B \in \mathcal L(H,X)$ and $H$ is a Hilbert space. Since $U_{21} \in \mathcal K(X,Y)$
it follows that $U_{21}B \in \mathcal K(H,Y) = \mathcal A(H,Y)$
and $U_{21}V_{12} = U_{12}BA \in \mathcal A(Y)$.
Consequently $(UV)_{2,2} \in \mathcal A(Y)$, since $U_{22} \in \mathcal A(Y)$ by assumption. Moreover,  the corresponding component
$(VU)_{2,2} = V_{21}U_{12} + V_{22}U_{22}  \in \mathcal A(Y)$, since 
$U_{12}$ and $U_{22}$ are approximable operators. 

In the case of $\mathcal J$ it remains to show that the (1,1)-components
of $UV$ and $VU$ are approximable for $U \in  \mathcal J$ and $V \in \mathcal L(X\oplus Y)$. 
From \eqref{prod} we have
\[
(VU)_{1,1} = V_{11}U_{11} + V_{12}U_{21},
\]
where $U_{11} \in \mathcal A(X)$ by assumption.
As above one uses the 
Kwapien-Maurey factorisation theorem to deduce that 
$V_{12}U_{21} \in \mathcal A(X)$. In addition, 
$(UV)_{1,1} =  U_{11}V_{11} + U_{12}V_{21} \in \mathcal A(X)$
since the operators $U_{11}$ and $U_{12}$ are approximable.

From our assumptions there are operators
$U_{11} \in  \mathcal K(X) \setminus  \mathcal A(X)$ and
$V_{22} \in  \mathcal K(Y) \setminus  \mathcal A(Y)$, 
so that 
\[
U = \left( \begin{array}{ccc}
U_{11} & 0\\
0 & 0\\
\end{array} \right) \in \mathcal I \setminus \mathcal J 
\textrm{ and } V = \left( \begin{array}{ccc}
0 & 0\\
0 & V_{22}\\
\end{array} \right) \in \mathcal J \setminus \mathcal I.
\] 
Hence $\mathcal  I$ are $\mathcal  J$ are incomparable ideals, and  it follows that
\[
\mathcal A(X \oplus Y)  \varsubsetneq \mathcal  I  \varsubsetneq \mathcal K(X \oplus Y),
\quad 
\mathcal A(X \oplus Y)  \varsubsetneq \mathcal  J  \varsubsetneq \mathcal K(X \oplus Y).
\]
Proposition \ref{quotient} yields that 
$q(\mathcal I)$ and $q(\mathcal J)$ are non-trivial incomparable 
closed ideals of  $\mathfrak{A}_{X\oplus Y}$.

For the final claim recall from Facts \ref{ap1}.(ii) 
that for $1 \le p <  2 < q < \infty$ there are  closed subspaces 
$X \subset \ell^p$ and $Y \subset \ell^q$ 
such that $\mathfrak{A}_{X} \neq \{0\}$ and $\mathfrak{A}_{Y} \neq \{0\}$.
Moreover, recall that  $X \subset \ell^p$ has cotype $2$ for 
$1 \le p < 2$ and  $Y \subset \ell^q$ has type $2$ for $2 < q < \infty$. 
\end{proof}

It is relevant to ask how many non-trivial closed ideals of 
$\mathcal K(X\oplus Y)$ we may construct for direct sums $X \oplus Y$ 
belonging to the class of spaces in Theorem \ref{ideals}.
For instance, the closed ideal $\mathcal I \cap \mathcal J$ of 
$\mathcal K(X\oplus Y)$ is non-trivial if and only if  
$\mathcal A(X,Y) \varsubsetneq \mathcal K(X,Y)$.
In Example \ref{8ideal} we will construct a
direct sum $X \oplus Y$ from this class of spaces for which $\mathcal K(X\oplus Y)$, 
and consequently also $\mathfrak{A}_{X \oplus Y}$,
contains at least 8 non-trivial closed ideals. Such examples 
require more preparation, including a study of the strict inclusion  
$\mathcal A(X,Y) \varsubsetneq \mathcal K(X,Y)$
among closed subspaces $X \subset \ell^p$ and $Y \subset \ell^q$ for $p \neq q$, 
as well as non-classical approximation properties associated to Banach operator ideals. 

\medskip

Recall that the Banach algebra $A$ is \textit{nilpotent} if there is $m \in \mathbb N$ such that 
$x_1 \cdots x_m = 0$ for all $x_1,\ldots, x_m \in A$.
It follows from a general result, see   \cite[Proposition 1.5.6.(iv)]{D00}, that a 
non-nilpotent complex radical Banach algebra is infinite-dimensional. 
In the sequel we also wish to apply this fact to 
the real quotient algebra $\mathfrak{A}_X = \mathcal K(X)/\mathcal A(X)$
in the case of real Banach spaces $X$.
One reason is that the problem 
of the closed ideals of $\mathfrak{A}_X$, that is, 
the closed ideals $\mathcal A(X) \subset \mathcal J \subset \mathcal  K(X)$,  
is also relevant for real spaces $X$.
To justify this application  we must verify that
for real Banach spaces $X$ the real Banach algebra $\mathfrak{A}_X$ is radical 
in the interpretation \eqref{rrad} from Section \ref{intro},  which is one of the equivalent 
conditions of radicality for complex Banach algebras. 
Actually, we will need a general version 
for quotient algebras which are formed from nested closed ideals contained 
in the class of inessential operators. 

Recall that the operator $S \in \mathcal L(X,Y)$ is \textit{inessential}, denoted
$S \in \mathcal{R}(X,Y)$ if $I_X - TS$ is a Fredholm operator for all
$T \in \mathcal L(Y,X)$, that is,
the kernel $Ker(I_X - TS)$ is finite-dimensional and the range $Im(I_X - TS)$
has finite codimension in $X$. The closed Banach operator ideal $\mathcal{R}$
was introduced by Kleinecke \cite{Kl}. Thus  
$\mathcal K(X) \subset \mathcal R(X)$, and it is known that
$\mathcal{R}(X)$ is the largest closed ideal of $\mathcal L(X)$ 
for which Atkinson's characterisation of Fredholm operators 
holds for any Banach space $X$  (see below). 
Recall that the Banach operator ideal  $\mathcal I$  is closed if the components 
$\mathcal{I}(X,Y)$ are closed in the operator norm 
for all spaces $X$ and $Y$.

\begin{prop}\label{real}
Let $X$ be an infinite-dimensional real Banach space.  
If $\mathcal I$ and $\mathcal J$ are closed Banach operator ideals such that
$\mathcal A(X) \subset \mathcal I(X) \subset \mathcal J(X) \subset \mathcal{R}(X)$,
then the real quotient algebra 
\[
\mathfrak{J}^{\mathcal I}_X  = : \mathcal J(X)/\mathcal I(X) 
\]
is radical in the sense of \eqref{rrad}. In particular, 
 $\mathfrak{A}_X$ is a real radical Banach algebra.
\end{prop}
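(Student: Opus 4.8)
The plan is to realise the unitisation $(\mathfrak{J}^{\mathcal{I}}_X)^{\#}$ as a quotient of a concrete unital subalgebra of $\mathcal{L}(X)$, and then to produce inverses by hand using Riesz--Fredholm theory, which is insensitive to the scalar field.

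First I would record that, since $\dim X = \infty$, the identity $I_X$ does not lie in $\mathcal{R}(X)$: otherwise, taking $T = I_X$ in the defining condition for inessential operators, the zero operator $I_X - I_X$ would be Fredholm on $X$, forcing $\dim X < \infty$. Hence $I_X \notin \mathcal{J}(X)$ and $I_X \notin \mathcal{I}(X)$, so $\mathcal{C} := \mathbb{R} I_X + \mathcal{J}(X)$ is a norm-closed unital subalgebra of $\mathcal{L}(X)$ in which the decomposition $\lambda I_X + S$ with $\lambda \in \mathbb{R}$ and $S \in \mathcal{J}(X)$ is unique, $\mathcal{I}(X)$ is a proper two-sided ideal of $\mathcal{C}$, and the assignment $\lambda I_X + S \mapsto \lambda 1 + (S + \mathcal{I}(X))$ induces an isomorphism of unital real algebras $\mathcal{C}/\mathcal{I}(X) \xrightarrow{\ \sim\ } (\mathfrak{J}^{\mathcal{I}}_X)^{\#}$ (a routine check on products and on the kernel). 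Since an algebra isomorphism preserves invertibility of elements, for each $S \in \mathcal{J}(X)$ the real spectrum of $S + \mathcal{I}(X)$ computed in $(\mathfrak{J}^{\mathcal{I}}_X)^{\#}$ equals the real spectrum of the coset of $S$ in $\mathcal{C}/\mathcal{I}(X)$. Thus \eqref{rrad} reduces to showing: for every $S \in \mathcal{J}(X)$ and every $\lambda \in \mathbb{R} \setminus \{0\}$ the operator $\lambda I_X - S$ is invertible modulo $\mathcal{I}(X)$ inside $\mathcal{C}$.

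To that end I would fix such $S$ and $\lambda$. Since $S \in \mathcal{J}(X) \subseteq \mathcal{R}(X)$, applying the defining property of the inessential operators with $T = \tfrac{1}{\lambda} I_X$ shows that $I_X - \tfrac{1}{\lambda} S$, and hence $\lambda I_X - S$, is a Fredholm operator on $X$; this step uses only classical Riesz--Fredholm theory, which holds verbatim over $\mathbb{R}$. By Atkinson's theorem there is a parametrix $T \in \mathcal{L}(X)$ with $(\lambda I_X - S)T - I_X \in \mathcal{F}(X)$ and $T(\lambda I_X - S) - I_X \in \mathcal{F}(X)$. Writing $(\lambda I_X - S)T = I_X + F$ with $F \in \mathcal{F}(X)$ and solving for $T$ gives $T = \tfrac{1}{\lambda} I_X + \tfrac{1}{\lambda}(ST + F)$; since $ST \in \mathcal{J}(X)$ by the ideal property and $F \in \mathcal{F}(X) \subseteq \mathcal{A}(X) \subseteq \mathcal{J}(X)$, this places $T$ inside $\mathcal{C}$. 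As both finite-rank remainders also lie in $\mathcal{F}(X) \subseteq \mathcal{A}(X) \subseteq \mathcal{I}(X)$, passing to $\mathcal{C}/\mathcal{I}(X)$ exhibits the coset of $T$ as a two-sided inverse of the coset of $\lambda I_X - S$, so $\lambda$ is not in the real spectrum of $S + \mathcal{I}(X)$.

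Letting $\lambda$ range over $\mathbb{R} \setminus \{0\}$ then yields $\sigma_{\mathbb{R}}(S + \mathcal{I}(X)) \subseteq \{0\}$; conversely $0$ lies in this spectrum whenever $\mathcal{J}(X) \neq \mathcal{I}(X)$, since $\mathcal{J}(X)/\mathcal{I}(X)$ is a proper ideal of $(\mathfrak{J}^{\mathcal{I}}_X)^{\#}$ and contains no invertible element (the assertion being vacuous when $\mathcal{J}(X) = \mathcal{I}(X)$). Hence $\mathfrak{J}^{\mathcal{I}}_X$ is radical in the sense of \eqref{rrad}, and specialising to the closed Banach operator ideals $\mathcal{I} = \mathcal{A}$ and $\mathcal{J} = \mathcal{K}$, which satisfy $\mathcal{A}(X) \subseteq \mathcal{K}(X) \subseteq \mathcal{R}(X)$, shows that $\mathfrak{A}_X$ is a real radical Banach algebra. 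I expect the only genuinely delicate point to be the bookkeeping in the third paragraph guaranteeing that the Fredholm parametrix $T$ can be chosen inside $\mathcal{C} = \mathbb{R} I_X + \mathcal{J}(X)$ and not merely in $\mathcal{L}(X)$; everything else is a transcription of the complex-scalar argument, once one notes that Atkinson's theorem and the inessential ideal $\mathcal{R}$ behave identically over $\mathbb{R}$ and $\mathbb{C}$.
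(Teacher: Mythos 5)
Your proof is correct and follows essentially the same route as the paper: both identify the unitisation $(\mathfrak{J}^{\mathcal I}_X)^{\#}$ with the concrete subalgebra $(\mathbb R I_X + \mathcal J(X))/\mathcal I(X)$ of $\mathcal L(X)/\mathcal I(X)$ and then invoke scalar-field-independent Riesz--Fredholm theory to invert $\lambda I_X - S$ modulo $\mathcal I(X)$ inside that subalgebra. The only (harmless) difference is that you obtain the inverse from Atkinson's parametrix and check via the identity $T = \tfrac{1}{\lambda}(I_X + ST + F)$ that it lies in $\mathbb R I_X + \mathcal J(X)$, whereas the paper reaches the same conclusion by citing Kleinecke's equivalence of invertibility modulo $\mathcal A(X)$ and modulo $\mathcal R(X)$.
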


\begin{proof}
Put $\mathbb K = \mathbb R$. Recall that 
the abstract unitisation $(\mathfrak{J}^{\mathcal I}_X)^{\#}$ of $\mathfrak{J}^{\mathcal I}_X$ 
consists of $\mathbb K \oplus \mathfrak{J}^{\mathcal I}_X$ equipped with the product
\[
(\alpha,S+\mathcal I(X)) \cdot (\beta,T+\mathcal I(X)) =
(\alpha \beta, \alpha T + \beta S + ST + \mathcal I(X)), \quad 
\alpha,  \beta \in \mathbb K,\  S, T \in \mathcal J(X),
\]
and the algebra norm 
$\Vert (\alpha,S+\mathcal I(X))\Vert = \vert \alpha \vert + \Vert S + \mathcal I(X)\Vert$.
Observe next that the unitisation $(\mathfrak{J}^{\mathcal I}_X)^{\#}$  can also be concretely identified with the closed subalgebra
\[
B =: \big( \mathbb KI_X  \oplus \mathcal J(X)\big)/\mathcal I(X) 
\]
of $\mathcal L(X)/\mathcal I(X)$. (Since $\mathcal J(X) \subset \mathcal R(X)$ 
this observation will connect the unitisation to 
classical Fredholm theory, which is independent of the scalar field.)
To see this identification note that the map 
\[
(\alpha,S+\mathcal I(X)) \mapsto \alpha I_X + S + \mathcal I(X)
\]
defines an algebra isomorphism $\theta: \mathbb K \oplus \mathfrak{J}^{\mathcal I}_X \to B$, since
\[
(1/3)(\vert \alpha \vert + \Vert S + \mathcal I(X)\Vert) \le \Vert \alpha I_X + S + \mathcal I(X) \Vert
\le \vert \alpha \vert + \Vert S + \mathcal I(X)\Vert
\]
for $\alpha \in \mathbb K$ and $S \in \mathcal J(X)$. In fact,  
for $S \in \mathcal J(X)$ we have
\[
\vert \alpha \vert =  \Vert \alpha I_X + \mathcal J(X) \Vert  =
\Vert \alpha I_X + S + \mathcal J(X) \Vert \le 
\Vert  \alpha I_X + S + \mathcal I(X) \Vert,
\]
so that $\vert \alpha \vert + \Vert S + \mathcal I(X)\Vert \le 3 
\Vert \alpha I_X + S + \mathcal I(X) \Vert$ by the triangle inequality.
Above we used that $\Vert I_X + \mathcal J(X) \Vert = 1$. Namely, if $\Vert I_X - U \Vert < 1$
for some $U \in \mathcal J(X)$, then 
$U = I_X - (I_X - U)$ is invertible.
This is impossible since  $X$ is infinite-dimensional
and  $U \in  \mathcal J(X) \subset \mathcal R(X)$.

Next let $\alpha \neq 0$ and $S \in \mathcal J(X)$ be arbitrary. To identify 
the inverse $(\alpha I_X + S + \mathcal I(X))^{-1}$ in $B$, 
let $\pi: \mathcal L(X) \to \mathcal L(X)/\mathcal J(X)$ be the quotient map.
Recall that  an operator $U \in \mathcal L(X)$ is invertible modulo 
$\mathcal A(X)$ (that is, a Fredholm operator) if and only if it is invertible modulo 
$\mathcal R(X)$, see e.g. \cite[Theorem 2]{Kl} or \cite[section 26.3 and 26.7.2]{Pie}. 
Note that this result holds equally well for real Banach spaces.
We know that  
$\pi(\alpha I_X + S) = \alpha I_X + \mathcal J(X)$
has the inverse $\alpha^{-1} I_X + \mathcal J(X)$
in $\mathcal L(X)/\mathcal J(X)$.
Hence the above invertibility fact implies that there is  
$V \in \mathcal J(X)$ and $R_1, R_2 \in \mathcal I(X)$ for which
\[
(\alpha^{-1}I_X + V)(\alpha I_X + S) = I_X + R_1 \textrm{ and } 
(\alpha I_X + S) (\alpha^{-1}I_X + V) = I_X + R_2.
\]
In other words, the inverse 
$(\alpha I_X + S + \mathcal I(X))^{-1} = \alpha^{-1}I_X + V + \mathcal I(X)$
belongs to $B$, 
so that  $\sigma_{\mathbb R}(S + \mathcal I(X)) = \{0\}$ 
for all $S \in \mathcal J(X)$ (that is, condition \eqref{rrad} holds).

Finally, $\mathfrak{A}_X$ is obtained for $\mathcal I(X) =
\mathcal A(X)$ and $\mathcal J(X)= \mathcal K(X)$.
\end{proof}

The above argument is also valid for the complex scalars $\mathbb K = \mathbb C$, 
so that $\mathfrak{J}^{\mathcal I}_X  = \mathcal J(X)/\mathcal I(X)$
is a radical Banach algebra in the classical sense.
We also point out that the non-zero
real quotient algebras $\mathfrak{A}_X \neq \{0\}$ (as well as 
$\mathfrak{J}^{\mathcal I}_X \neq \{0\}$)
cannot have a unit element in view of Proposition \ref{real}. 
In fact, if $S + \mathcal A(X)$ were the unit in $\mathfrak{A}_X$ for some 
$S \in \mathcal K(X)$, then $I_X - S + \mathcal A(X)$ is invertible in the
unitisation $B$, because $\sigma_{\mathbb R}(-S + \mathcal A(X)) = \{0\}$. 
Let $\alpha I_X + T + \mathcal A(X)$ be the inverse in $B$, so that
\begin{align*}
I_X + \mathcal A(X) & = (I_X - S + \mathcal A(X))  (\alpha I_X + T + \mathcal A(X)) 
= \alpha I_X - \alpha S +  T - ST + \mathcal A(X) \\
& =  \alpha I_X - \alpha S  + \mathcal A(X),
\end{align*}
since $ST - T \in  \mathcal A(X)$ by assumption.
This implies that $\alpha = 1$ and  $S \in \mathcal A(X)$, which contradicts the assumption.

\medskip

We showed in  \cite[Proposition 3.1.(i)]{TW} that if the Banach space $X$
has the B.C.A.P., but fails the A.P.,
then there is a compact operator $U \in \mathcal K(X)$ such that 
$U^m \notin \mathcal A(X)$ for any $m \in \mathbb N$. In particular, the quotient
algebra  $\mathfrak{A}_X$ is non-nilpotent.
In  \cite[Proposition  2.2 and Corollary 2.4]{TW} we constructed linear isomorphic 
embeddings $\psi: c_0 \to \mathfrak{A}_X$ for certain closed subspaces $X \subset \ell^p$, 
where $1 \le p < \infty$ and $p \neq 2$, and $X \subset c_0$. 
However, by  \cite[Proposition 2.5]{TW}
the embedding $\psi$ cannot preserve any of the multiplicative structure of $c_0$, and
the construction does not ensure that $\mathfrak{A}_X$ is non-nilpotent. 
We next improve and complement these results from \cite{TW},  
and show there are also closed subspaces  $X \subset \ell^p$, where 
$1 \le p < \infty$ and $p \neq 2$, and $X \subset c_0$, 
such that the quotient algebra $\mathfrak{A}_X$ is non-nilpotent and infinite-dimensional.
These subspaces will also be used in later examples.
By contrast with \cite[section 2]{TW} the subsequent construction in Theorem \ref{nil} 
will be  based on  
a general compact factorisation result of Bachelis \cite{B}, which allows for 
greater generality and additional features.

\smallskip

Let $\mathcal E$ be a real Banach space with a $1$-unconditional basis $(e_n)$, and
suppose that $(E_n)$ is a sequence of Banach spaces. The corresponding 
$\mathcal E$-direct sum  is defined by
\[
\big( \oplus_{n \in \mathbb N} E_n \big)_{\mathcal E} 
= \{(x_n): x_n \in E_n, \ \sum_{n=1}^\infty \Vert x_n\Vert e_n \textrm{ converges in } \mathcal E\}.
\]
It is not difficult to check by using \cite[Proposition 1.c.7]{LT1} that 
\[
\Vert (x_n)\Vert_{\mathcal E} = 
\big \Vert \sum_{n=1}^\infty \Vert x_n\Vert e_n \big \Vert_{\mathcal E}, \quad
(x_n) \in \big( \oplus_{n \in \mathbb N} E_n \big)_{\mathcal E},
\]
defines a complete norm in $\big( \oplus _{n \in \mathbb N} E\big)_{\mathcal E}$.
We use $Y \approx Z$ for linearly isomorphic 
Banach spaces $Y$ and $Z$.

\begin{thm}\label{nil}
Suppose that the Banach space $E$ has the B.A.P., and that 
\[
\big( \oplus _{n \in \mathbb N} E\big)_{\mathcal E} \approx E
\]
for some real Banach space $\mathcal E$  which has a $1$-unconditional basis. 
If  $E$ has a closed subspace $X_0$ that fails the A.P.,
then there is a closed subspace $X \subset E$ such that 
$\mathfrak{A}_X$ is infinite-dimensional,  and for which there is an operator $U \in \mathcal K(X)$
that satisfies $U^m \notin \mathcal{A}(X)$ for all $m \in \mathbb N$.
In particular,  $\mathfrak{A}_X$ is a non-nilpotent quotient algebra.

The assumptions apply e.g. to  $E = \ell^p$ for $1 \le p < \infty$ and
$p \neq 2$, or $E = c_0$. 
\end{thm}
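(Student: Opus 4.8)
The plan is to extract from the failure of the A.P.\ a non-approximable compact operator, decompose it into an infinite chain of compact operators that lives inside $E$, and then realise this chain as the sequence of iterates of a single weighted-shift operator on an $\mathcal E$-direct sum. To begin, since $X_0 \subset E$ fails the A.P., \cite[Theorem 1.e.4]{LT1} provides a Banach space $W$ and a compact operator $T \in \mathcal K(W,X_0) \setminus \mathcal A(W,X_0)$. The essential tool is the compact factorisation theorem of Bachelis \cite[Theorem 2']{B}: because $E$ has the B.A.P.\ and $\big( \oplus_{n} E \big)_{\mathcal E} \approx E$, every compact operator with range in $E$ factors compactly through a closed subspace of $E$. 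Applying this to $T$ and then iterating it on the successive left factors, one obtains closed subspaces $Z_1, Z_2, \ldots \subset E$, compact operators $A_1 \in \mathcal K(W,Z_1)$ and $A_{n+1} \in \mathcal K(Z_n,Z_{n+1})$, and compact operators $B_n \in \mathcal K(Z_n,X_0)$, with $T = B_1 A_1$ and $B_n = B_{n+1}A_{n+1}$ for all $n$. Hence $T = B_{m+1}\, \Pi_m\, A_1$, where
\[
\Pi_m := A_{m+1} A_m \cdots A_2 \in \mathcal K(Z_1,Z_{m+1}),
\]
so that $\Pi_m \notin \mathcal A(Z_1,Z_{m+1})$ for every $m$ --- otherwise $T$ would be approximable.

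Next I would put $X := \big( \oplus_{n \in \mathbb N} Z_n \big)_{\mathcal E}$, which is a closed subspace of $\big( \oplus_{n} E \big)_{\mathcal E}$; transporting through the isomorphism $\big( \oplus_{n} E \big)_{\mathcal E} \approx E$, one may regard $X$ as a closed subspace of $E$. Fixing scalars $\lambda_n > 0$ with $\lambda_n \Vert A_{n+1}\Vert \to 0$, define $U \in \mathcal L(X)$ by the weighted forward shift
\[
U\big( (x_n)_n \big) = \big( 0,\ \lambda_1 A_2 x_1,\ \lambda_2 A_3 x_2,\ \lambda_3 A_4 x_3,\ \ldots \big).
\]
Using the $1$-unconditionality of $(e_n)$ one verifies that $U$ is a well-defined bounded operator, and that $U$ is compact: it is the uniform limit of the operators $U \circ Q_N$, where $Q_N$ is the norm-one coordinate projection of $X$ onto its first $N$ summands and each $U \circ Q_N$ is a finite sum of compact operators. (For $E = \ell^p$ or $E = c_0$, with $\mathcal E$ chosen accordingly, all of this is immediate.)

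A direct computation then gives $(U^m x)_k = 0$ for $k \le m$ and $(U^m x)_{n+m} = \lambda_{n+m-1}\cdots \lambda_n\, A_{n+m}\cdots A_{n+1}\, x_n$; consequently, if $\iota_1 \colon Z_1 \to X$ denotes the canonical inclusion onto the first summand and $P_{m+1} \colon X \to Z_{m+1}$ the corresponding coordinate projection, then $P_{m+1} U^m \iota_1 = (\lambda_m \cdots \lambda_1)\,\Pi_m$. Since $\mathcal A$ is an operator ideal and $\Pi_m \notin \mathcal A(Z_1,Z_{m+1})$, this forces $U^m \notin \mathcal A(X)$ for every $m \in \mathbb N$, i.e.\ $q(U)^m \neq 0$ in $\mathfrak A_X$ for all $m$; in particular $\mathfrak A_X$ is non-nilpotent. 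As $\mathfrak A_X$ is a radical Banach algebra (Proposition \ref{real}, and its complex counterpart) and a finite-dimensional radical algebra is nilpotent by \cite[Proposition 1.5.6.(iv)]{D00}, $\mathfrak A_X$ must be infinite-dimensional. For the concluding examples one uses that $\ell^p$ ($1 \le p < \infty$) and $c_0$ have a monotone basis, hence the B.A.P., that $\big( \oplus_n \ell^p \big)_{\ell^p} = \ell^p$ and $\big( \oplus_n c_0 \big)_{c_0} = c_0$ isometrically, and that by Facts \ref{ap1}.(i) these spaces contain closed subspaces failing the A.P.

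I expect the main difficulty to be the infinite-chain factorisation in the first paragraph: one must apply the appropriate form of Bachelis's theorem repeatedly while keeping every intermediate space $Z_n$ inside $E$ (equivalently, inside $\big( \oplus_n E \big)_{\mathcal E}$) and, simultaneously, ensuring that each finite partial product $\Pi_m$ remains non-approximable. By contrast, the construction and analysis of the shift operator $U$ is routine, apart from the minor point that its boundedness and compactness genuinely rely on the $1$-unconditionality of the basis of $\mathcal E$.
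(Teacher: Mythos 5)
Your factorisation chain is exactly the paper's: both proofs start from a compact non-approximable $T\colon W\to X_0$, apply Bachelis's theorem \cite[Theorem 2']{B} repeatedly to the left factors to get $Z_n\subset E$, $A_{n+1}\in\mathcal K(Z_n,Z_{n+1})$ and $B_n=B_{n+1}A_{n+1}$, and observe that the partial products stay non-approximable. Where you genuinely diverge is the last step. The paper takes $X=\big(X_0\oplus(\oplus_n Z_n)\big)_{\mathcal E}$, uses the individual operators $\widehat B_n=J_0B_nP_n$ and $\widehat A_n=J_nA_nP_{n-1}$ (each obviously bounded and compact, no infinite sums) to conclude that $\mathfrak A_X$ is non-nilpotent, and only then produces the single operator $U$ with $U^m\notin\mathcal A(X)$ by citing Grabiner's Baire-category result \cite{G}, as in \cite[Proposition 3.1]{TW}. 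You instead construct $U$ explicitly as a weighted forward shift whose $m$-th power compresses to $(\lambda_m\cdots\lambda_1)\Pi_m$. That is a more constructive and self-contained route to the same conclusion, and the computation $P_{m+1}U^m\iota_1=(\lambda_m\cdots\lambda_1)\Pi_m$, together with the ideal property of $\mathcal A$, does give $U^m\notin\mathcal A(X)$ once $U$ is known to be a compact operator.

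That last point is where you have a genuine gap. For a general real Banach space $\mathcal E$ with a $1$-unconditional basis, the condition $\lambda_n\Vert A_{n+1}\Vert\to 0$ does \emph{not} make the weighted shift bounded: $1$-unconditionality only lets you compare two vectors whose coefficients sit in the \emph{same} positions, whereas your operator moves the $n$-th coefficient to the $(n+1)$-st position, and the right shift $e_n\mapsto e_{n+1}$ need not be bounded on $\mathcal E$ (take $\mathcal E=\ell^2$ renormed with rapidly increasing weights $w_n$; the unit vectors remain $1$-unconditional but the shift has norm $\sup_n(w_{n+1}/w_n)^{1/2}=\infty$). The fix is easy but must be stated: from $1$-unconditionality one has $\Vert x_n\Vert\le\Vert x\Vert/\Vert e_n\Vert$, so choosing the weights with $\sum_n\lambda_n\Vert A_{n+1}\Vert\,\Vert e_{n+1}\Vert/\Vert e_n\Vert<\infty$ makes $U$ bounded by the triangle inequality and compact as the norm limit of the $UQ_N$. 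With that strengthened choice of $(\lambda_n)$ your argument is complete; the remaining steps (non-approximability of $\Pi_m$, radicality via Proposition \ref{real}, infinite-dimensionality via \cite[Proposition 1.5.6.(iv)]{D00}, and the verification for $E=\ell^p$ and $E=c_0$) all match the paper.
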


\begin{proof}
Since $X_0$ fails the A.P. there is  a Banach space $W$ and a 
compact non-approximable operator $T_0: W \to X_0$, see e.g.  \cite[Theorem 1.e.4]{LT1}. 
By Bachelis' factorisation theorem
 \cite[Theorem 2']{B} there is a closed subspace 
$Z_1$ of $E$ and a compact factorisation 
$T_0 = B_1A_1$, that is,  $A_1 \in \mathcal K(W, Z_1)$ and $B_1 \in \mathcal K(Z_1,X_0)$.
Note that $B_1 \notin \mathcal A(Z_1,X_0)$,
since $T_0$ is not an approximable operator. 

By successive applications of \cite[Theorem 2']{B} we obtain a
sequence $(Z_n)$ of closed subspaces of $E$ as well as sequences $(A_n)$ and $(B_n)$
of compact operators, such that
\[
B_n = B_{n+1}A_{n+1}, \ \textrm{ where } A_{n+1} \in \mathcal K(Z_n,Z_{n+1})\  \textrm{ and }  
B_{n+1} \in \mathcal K(Z_{n+1},X_0) \setminus \mathcal A(Z_{n+1},X_0) 
\]
for all $n \in \mathbb N$. 

Let $X = \big( X_0 \oplus ( \oplus_{n \in \mathbb N} Z_n)\big)_{\mathcal E}$.
By assumption $X$ is, up to a linear isomorphism, a closed linear subspace of $E$.
Let $J_n$ denote the natural inclusion maps and $P_n$ the natural 
coordinate projections for $n \ge 0$,
such that $P_0: X \to X_0$ and $J_0: X_0 \to X$, whereas 
$P_n: X \to Z_n$ and $J_n: Z_n \to X$ for $n \ge 1$. 
(Here we canonically identify $X_0$ and $Z_n$
with closed subspaces of $X$.)
Define operators $\widehat{B}_n \in \mathcal L(X)$ 
for $n \in \mathbb N$
and $\widehat{A}_n \in \mathcal L(X)$ for $n \ge 2$ as follows:
\[
\widehat{B}_n(x,z_1,z_2,\ldots ) = (B_nz_n,0,0,\ldots ) \textrm{ and  }
\widehat{A}_n(x,z_1,\ldots) = (0,0,\ldots,A_{n}z_{n-1},0,\ldots ),
\]
where $A_{n}z_{n-1}\in Z_{n}$  (in the $n$:th position in 
$\oplus_{k \in \mathbb N} Z_k$).
This means that $\widehat{B}_n = J_0B_nP_n \in \mathcal K(X)$ for all
$n \in \mathbb N$ and
$\widehat{A}_n = J_nA_nP_{n-1}  \in \mathcal K(X)$ for all $n \ge 2$. 
Moreover,
\begin{equation}\label{iterate}
\widehat{B}_1 = \widehat{B}_2 \widehat{A}_2 = \widehat{B}_3 \widehat{A}_3 \widehat{A}_2 =
\ldots =  \widehat{B}_n \circ \big( \prod_{k=1}^{n-1} \widehat{A}_{n+1-k} \big)
\end{equation}
for $n \in \mathbb N$. Namely, by successive evaluations one gets that
\begin{align*}
\widehat{B}_n \widehat{A}_n & \widehat{A}_{n-1} \cdots \widehat{A}_2(x_1,z_1,z_2,\ldots)
= (B_nA_n \ldots A_2z_1,0,\ldots )\\ 
& = (B_1z_1,0,\ldots ) = \widehat{B}_1(x_1,z_1,z_2,\ldots)
\end{align*}
for all $(x_1,z_1,\ldots) \in X$. It follows from \eqref{iterate}  that the 
quotient algebra $\mathfrak{A}_X$ is not nilpotent, since $\widehat{B}_1 \notin \mathcal A(X)$. 

We are now in position to repeat the argument of \cite[Proposition  3.1]{TW}.
In fact, $\mathfrak{A}_X$ is a  non-nilpotent radical Banach algebra, 
which is infinite-dimensional by \cite[Proposition  1.5.6.(iv)]{D00}. 
Note that for real Banach spaces $X$ 
we need Proposition \ref{real} to ensure that 
$\mathfrak{A}_X$ is a real radical Banach algebra  in the sense of  \eqref{rrad}.
Moreover, by the Baire category argument from \cite{G}
there is an operator $U \in \mathcal K(X)$ 
such that $U^m \notin \mathcal A(X)$ for any $m \in \mathbb N$. 
\end{proof}
 
In the case of complex scalars the operator $U \in \mathcal K(X)$
from Theorem \ref{nil} has the stronger property that 
$U^n - U^m \notin \mathcal A(X)$ for all $n \neq m$.
Namely, if $U^{n+k} - U^n  \in \mathcal A(X)$ for some $n, k \in \mathbb N$, then
by iteration $U^{n+ ks} - U^n \in \mathcal A(X)$ for all $s \in \mathbb N$. Conclude that
\[
\lim_{s \to \infty} \Vert U^{n+ ks} + \mathcal A(X)\Vert^{1/(n+ks)} = 
\lim_{s \to \infty}  \Vert U^n + \mathcal A(X)\Vert^{1/(n+ks)} =  1,
\]
since $U^n \notin \mathcal A(X)$.
This contradicts the fact that $\mathfrak{A}_X$ is a radical Banach algebra, because 
$\lim_{m\to\infty} \Vert V^m + \mathcal A(X)\Vert^{1/m} = 0$  for all $V \in \mathcal K(X)$
by the spectral radius formula.

\medskip
 
Theorem \ref{nil} enables us to revisit the setting of 
\cite[Proposition  2.2 and Corollary 2.4]{TW}, and to obtain
linear isomorphic embeddings $c_0 \to \mathfrak{A}_X$ that display quite extreme properties
for particular closed subspaces  $X \subset \ell^p$, 
where $1 \le p < \infty$ and $p \neq 2$, or $X \subset c_0$.

\begin{ex} 
There is a closed subspace  $X \subset \ell^p$
for $1 \le p < \infty$ and $p \neq 2$, or $X \subset c_0$ for which

(i) there is a linear isomorphic embedding 
$\theta: c_0 \to \mathfrak{A}_X$, such that
$\theta(a)^n \neq 0$ for all $a \neq 0$ and $n \in \mathbb N$, or

(ii) there is  a linear isomorphic embedding 
$\psi: c_0 \to \mathfrak{A}_X$, such that
$\psi(a) \psi(b) = 0$ for all $a, b \in c_0$. In particular,
the closed subalgebra 
$\mathcal A(\psi(c_0))$ of $\mathfrak{A}_X$ generated by $\psi(c_0)$ satisfies 
\[
\mathcal A(\psi(c_0)) = \psi(c_0).
\]
\end{ex}

\begin{proof} 
For unity of notation we only construct the desired closed subspaces $X \subset \ell^p$. 
The case where $X \subset c_0$ is similar.

\smallskip

(i) By Theorem \ref{nil} 
there is a closed linear subspace $Y \subset \ell^p$ for $p \neq 2$
as well as $U \in \mathcal K(Y)$ such that $U^m \notin \mathcal A(Y)$ for all $m \in \mathbb N$.
Consider  $X = \big( \oplus_{n\in \mathbb N} Y \big)_{\ell^p}$,
which can be identified with a closed subspace of $\ell^p$. 
Define $\beta: c_0 \to \mathcal K(X)$ by 
\[
\beta(a) = \sum_{k=1}^\infty a_kU_k, \quad a = (a_k) \in c_0,
\]
where $U_k$ denotes the operator $U$ defined on the $k$:th copy of $Y$ in $X$. 
Let $a = (a_k) \in c_0$ be arbitrary. 
Since $U_k \circ U_r = 0$ for all $k \neq r$ by definition, we get that
\[
\big(\sum_{k=1}^m a_kU_k\big) \circ \big(\sum_{r=1}^m a_kU_k\big) = 
\sum_{k=1}^m a_k^2 U_k^2 
\]
for all $m \in \mathbb N$. Since $(a_k) \in c_0$ we may pass  to the limit in $\mathcal K(X)$ 
as $m \to \infty$, and obtain that 
$\beta(a)^2 = \beta(a) \cdot \beta(a) =  \sum_{k=1}^\infty a_k^2 U_k^2$. 
By induction we get that
\[
\beta(a)^n = \sum_{k=1}^\infty a_k^n U_k^n, \quad a = (a_k) \in c_0.
\]
Suppose next that $a = (a_k) \neq 0$ and pick $k \in \mathbb N$ such that $a_k \neq 0$. 
Clearly $\beta(a)^n \notin \mathcal A(X)$, since the $k$:th term 
$a_k^n U_k^n \notin \mathcal A(X)$.

Finally, one verifies as in the proof of  \cite[Proposition 2.2]{TW} that 
$\theta = q \circ \beta$ is a linear isomorphic embedding $c_0 \to \mathfrak{A}_X$,
where $q: \mathcal K(X) \to \mathfrak{A}_X$ is the quotient map. 
From the above computation it follows that
$\theta(a)^n \neq 0$ whenever $a \neq 0$ and $n \in \mathbb N$.

\smallskip

(ii) The argument modifies the original construction in \cite[section 2]{TW}.
According to Facts \ref{ap1}.(ii) we may pick a closed linear subspace $Z \subset \ell^p$
and an operator $U \in \mathcal K(Z) \setminus \mathcal A(Z)$.
Consider $Y = Z \oplus Z$ and define $V \in \mathcal K(Y)$ by 
$V(x,y) = (0,Ux)$ for $(x,y) \in Z \oplus Z$. Then $V \notin \mathcal A(Y)$ and $V^2 = 0$.

Consider $X = \big( \oplus_{n\in \mathbb N} Y \big)_{\ell^p}$, and
define $\beta: c_0 \to \mathcal K(X)$ by 
\[
\beta(a) = \sum_{k=1}^\infty a_kV_k, \quad a = (a_k) \in c_0,
\]
where $V_k$ denotes the operator $V$ on the $k$:th copy of $Y = Z \oplus Z$ in $X$. 
As above $\psi = q \circ \beta$ is a linear isomorphic embedding
$c_0 \to  \mathfrak{A}_X$. Let $a = (a_k), b = (b_k) \in c_0$
be arbitrary. Since $V_k \circ V_r = 0$ for all $k, r \in \mathbb N$ by definition, we get that
\[
(\sum_{k=1}^m a_kV_k) \circ (\sum_{r=1}^m b_kV_k) = \sum_{k=1}^m a_kb_k V_k^2 
= 0
\]
for all $m \in \mathbb N$. Deduce that in the limit
$\beta(a) \beta(b) = 0$, so  that  $\psi(a) \psi(b) = 0$. 

Finally, finite linear combinations of the products 
$\psi(u_1) \cdots \psi(u_s)$,
where $u_1, \ldots, u_s \in c_0$ and $s \in \mathbb N$, are dense in 
$\mathcal A(\psi(c_0))$. 
This implies that $\mathcal A(\psi(c_0)) = \psi(c_0)$. 
\end{proof}

\section{Compact non-approximable operators between subspaces of $\ell^p$
and $\ell^q$}\label{compactnona}

In this section we first  discuss the existence of compact non-approximable operators between closed subspaces  $X \subset \ell^p$ and $Y \subset \ell^q$ for  $p \neq  q$, 
that is, whether there are such subspaces for which 
\begin{equation}\label{nKA}
\mathcal A(X,Y) \varsubsetneq \mathcal K(X,Y).
\end{equation} 
Recall from Facts \ref{ap1}.(ii) 
that for any $p \neq 2$ there are 
closed subspaces $Z \subset \ell^p$ for which 
$\mathcal A(Z) \varsubsetneq \mathcal K(Z)$. 
However, Theorem \ref{KMJ} implies that \eqref{nKA} cannot hold
for $q < 2 < p$, so the situation becomes subtler  
once $X$ and $Y$ are specified from different classes of spaces. 
The cases $p = 2$ or $q = 2$  are excluded from our discussion
since \eqref{nKA} is impossible in this event. 
 
Our motivation is the quest for additional examples in the setting of Theorem \ref{ideals},
but the question in  \eqref{nKA} has fundamental interest. 
It turns out to involve results  and concepts
which were devised for other purposes. 
In particular, we will use the Banach operator ideal 
of the operators that factor compactly through a subspace of $\ell^r$ as well as 
non-classical approximation properties. We apply these results 
to exhibit several examples related to Theorem \ref{ideals}, including a direct sum 
$X \oplus Y$ in Example \ref{8ideal}, where  $\mathcal K(X \oplus Y)$ contains (at least) 
8 non-trivial closed ideals.

\smallskip

The following Banach factorisation ideals will be essential for our purposes.
Let  $r \in [1,\infty)$ be fixed. For Banach spaces $X$ and $Y$ we define 
\begin{align}\label{rfactor}
\begin{split}
\mathcal{KS}_r(X,Y) = \{T \in \mathcal K(X,Y):\  T & = B  A,\ A \in \mathcal K(X,Z), \  
B \in \mathcal K(Z,Y),\\
& Z \subset \ell^r \textrm{  a closed subspace}\}.
\end{split}
\end{align}
The associated factorisation norm 
of $T \in \mathcal{KS}_r(X,Y) $ is 
\[
\vert T \vert_{\mathcal{KS}_r} = \inf \{ \Vert B\Vert \cdot \Vert A\Vert : T = BA \textrm{ factors as in } \eqref{rfactor}\}
\]
Recall further that the class of classical $r$-compact operators $\mathcal K_r$
is defined by 
\[
\mathcal K_r(X,Y)=\{T\in\mathcal K(X,Y) : T=BA, \textrm{ where }  
A\in\mathcal K(X,\ell^r),\  B\in\mathcal K(\ell^r,Y)\},
\]
and the related factorisation norm is 
\[
\vert T\vert_{\mathcal K_r} = \inf \{||B|| \cdot ||A|| : T= BA, \textrm{ where } 
A\in\mathcal K(X,\ell^r),\  B\in\mathcal K(\ell^r,Y)\}.
\] 
It is known that $(\mathcal K_r,\vert \cdot \vert_{\mathcal K_r})$ is a Banach operator ideal, 
see e.g. \cite[Proposition 1]{J71} and \cite[Theorem 2.1]{FS79}, or \cite[18.3]{Pie}.
The class $(\mathcal{KS}_r,\vert \cdot \vert_{\mathcal{KS}_r})$ 
is also a Banach operator ideal, but this fact is less well-documented in the literature.
For $r = 2$ one has $\mathcal{KS}_2(X,Y)  = \mathcal  K_2(X,Y) \subset \mathcal A(X,Y)$
for all spaces $X$ and $Y$, so this case will not be of interest for us.  

Let $(\mathcal I,\vert \cdot\vert_{\mathcal I})$ 
be a Banach operator ideal.
Recall that the injective hull $\mathcal I^{inj}$ of $\mathcal I$ is 
the Banach operator ideal
\[
\mathcal I^{inj}(X,Y) =\{T\in \mathcal L(X,Y)\mid  J_YT\in \mathcal I(X,\ell^\infty(B_{Y^*})\},
\]
which is normed by $\vert T\vert_{\mathcal I^{inj}} =: \vert J_YT\vert_\mathcal I$
for $T \in \mathcal I^{inj}(X,Y)$. 
Here $J_Y$ is the natural isometric embedding from $Y$ into $\ell^\infty(B_{Y^*})$.
The following result was proved by
Fourie  \cite[Theorem 2.1]{F83} (it is also stated without proof 
on lines 4-6 of \cite[p. 529]{Pie14}).

\begin{prop}\label{Kfactor1} 
If $1 \le r < \infty$, then 
\[
\mathcal{KS}_r=\mathcal  K_r^{inj},
\]
with equality of the respective operator ideal norms.
In particular,  $(\mathcal{KS}_r,\vert \cdot \vert_{\mathcal{KS}_r})$ 
is a Banach operator ideal.
\end{prop}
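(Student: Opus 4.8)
The plan is to prove Proposition~\ref{Kfactor1} directly by a double inclusion, showing both $\mathcal{KS}_r \subset \mathcal K_r^{inj}$ and $\mathcal K_r^{inj} \subset \mathcal{KS}_r$, and tracking the factorisation norms along the way. Since the injective hull $\mathcal K_r^{inj}$ is automatically a Banach operator ideal (the injective hull of a Banach operator ideal is always one, by Pietsch), the "in particular" clause follows immediately once the equality $\mathcal{KS}_r = \mathcal K_r^{inj}$ with matching norms is established; so the whole proposition reduces to this single identity.

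For the inclusion $\mathcal{KS}_r \subset \mathcal K_r^{inj}$, suppose $T \in \mathcal{KS}_r(X,Y)$ factors as $T = BA$ with $A \in \mathcal K(X,Z)$, $B \in \mathcal K(Z,Y)$ and $Z \subset \ell^r$ a closed subspace. Compose with the isometric embedding $J_Y : Y \to \ell^\infty(B_{Y^*})$: then $J_YT = (J_YB)A$ factors through $Z \subset \ell^r$, so the point is to upgrade the factorisation through a \emph{subspace} of $\ell^r$ into a genuine factorisation through $\ell^r$ itself, at the cost of leaving $Y$ (which is why we passed to $\ell^\infty(B_{Y^*})$). The standard tool here is that a compact operator $B : Z \to \ell^\infty(B_{Y^*})$ on a subspace $Z$ of $\ell^r$ admits a compact extension $\widetilde{B} : \ell^r \to \ell^\infty(B_{Y^*})$ with the same norm (this is essentially the injectivity of $\ell^\infty(B_{Y^*})$ combined with a perturbation/approximation argument to preserve compactness — compactness of an extension is the one slightly delicate point, and can be handled by first extending finite-rank approximants of $B$ and patching, or by invoking the known fact that $\mathcal K$ is an injective operator ideal). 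Then $J_YT = \widetilde{B}A$ with $A \in \mathcal K(X,\ell^r)$ and $\widetilde{B} \in \mathcal K(\ell^r,\ell^\infty(B_{Y^*}))$, so $J_YT \in \mathcal K_r(X,\ell^\infty(B_{Y^*}))$, i.e. $T \in \mathcal K_r^{inj}(X,Y)$, with $\vert T\vert_{\mathcal K_r^{inj}} = \vert J_YT\vert_{\mathcal K_r} \le \Vert \widetilde{B}\Vert \Vert A\Vert = \Vert B\Vert \Vert A\Vert$; taking the infimum over factorisations gives $\vert T\vert_{\mathcal K_r^{inj}} \le \vert T\vert_{\mathcal{KS}_r}$.

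For the reverse inclusion $\mathcal K_r^{inj} \subset \mathcal{KS}_r$, suppose $T \in \mathcal K_r^{inj}(X,Y)$, so $J_YT \in \mathcal K_r(X,\ell^\infty(B_{Y^*}))$ factors as $J_YT = B_0A_0$ with $A_0 \in \mathcal K(X,\ell^r)$ and $B_0 \in \mathcal K(\ell^r,\ell^\infty(B_{Y^*}))$, and $\Vert B_0\Vert \Vert A_0\Vert$ arbitrarily close to $\vert T\vert_{\mathcal K_r^{inj}}$. Put $Z = \overline{A_0(X)} \subset \ell^r$, a closed subspace, and factor $A_0 = \iota A_1$ where $A_1 \in \mathcal K(X,Z)$ and $\iota : Z \hookrightarrow \ell^r$ is the inclusion. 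Then $J_YT = (B_0\iota)A_1$, with $B_0\iota \in \mathcal K(Z,\ell^\infty(B_{Y^*}))$. The remaining issue is to descend the target from $\ell^\infty(B_{Y^*})$ back to $Y$: since $J_YT$ actually takes values in the subspace $J_Y(Y) \cong Y$, and $J_YT = (B_0\iota)A_1$, the operator $B_0\iota$ restricted to $\overline{A_1(X)}$ already lands in $J_Y(Y)$, but one must be careful because $B_0\iota$ need not map all of $Z$ into $J_Y(Y)$. This is handled by the usual trick: replace $A_1$ by $A_1$ itself and $B_0\iota$ by $J_Y^{-1}(B_0\iota)$ defined only on $\overline{A_1(X)}$ — more precisely, set $Z' = \overline{A_1(X)} \subset Z \subset \ell^r$, note $B_0\iota$ maps $Z'$ into $J_Y(Y)$ (by density and continuity, since it does so on the dense set $A_1(X)$), let $B = J_Y^{-1}(B_0\iota)|_{Z'} \in \mathcal K(Z',Y)$ and $A = A_1$ viewed into $Z'$, whence $T = BA$ is a compact factorisation through the closed subspace $Z' \subset \ell^r$ with $\Vert B\Vert \Vert A\Vert \le \Vert B_0\Vert \Vert A_0\Vert$. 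Taking infima yields $\vert T\vert_{\mathcal{KS}_r} \le \vert T\vert_{\mathcal K_r^{inj}}$, completing the proof. I expect the main obstacle to be the compact-extension step in the first inclusion (ensuring the extension of the compact operator off a subspace of $\ell^r$ to all of $\ell^r$ can itself be chosen compact, rather than merely bounded), which is where the injectivity of the Banach operator ideal $\mathcal K$ — or an explicit approximation argument — must be invoked cleanly.
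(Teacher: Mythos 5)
Your double-inclusion argument is essentially correct, but note that the paper does not prove this proposition at all: it is quoted from Fourie \cite[Theorem 2.1]{F83}, and the subsequent Remark only sketches the available routes. Your proof is precisely the self-contained ``direct approach'' the authors allude to there: the inclusion $\mathcal K_r^{inj}\subset\mathcal{KS}_r$ is the easy corestriction argument (your $Z'=\overline{A_0(X)}$ step is fine, since $B_0$ maps the dense set $A_0(X)$ into the closed subspace $J_Y(Y)$), and the inclusion $\mathcal{KS}_r\subset\mathcal K_r^{inj}$ hinges on exactly the point the Remark names, namely the \emph{compact} extension property of $\ell^\infty(I)$. Fourie instead invokes \cite[Theorem 2.3]{FS79} to replace the compact factor $B$ by a merely bounded one, which sidesteps the extension problem; your route avoids that external result at the price of having to justify the compact extension. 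On that one delicate point your proposal is slightly loose: the ``injectivity of the Banach operator ideal $\mathcal K$'' is not the relevant fact (injectivity of an ideal $\mathcal I$ concerns the implication $J_YT\in\mathcal I\Rightarrow T\in\mathcal I$, not extensions off subspaces). What does work is your other suggestion: since $\ell^\infty(B_{Y^*})$ has the metric approximation property, $J_YB$ is approximable on $Z$, each finite-rank approximant extends to $\ell^r$ with the same norm by Hahn--Banach applied coordinatewise, and a telescoping series of such extensions produces a compact extension $\widetilde B$ with $\Vert\widetilde B\Vert\le\Vert B\Vert+\varepsilon$, which is enough for the norm equality after taking infima. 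With that step made precise, your argument is a complete and arguably more transparent proof than the chain of citations in the paper.
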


\begin{remarks}
(i) The argument in \cite{F83} uses the additional fact \cite[Theorem 2.3]{FS79} 
that in the factorisation $T = BA$ from \eqref{rfactor} it suffices to assume that $B$ 
is a bounded operator. We note that it is possible to avoid this step by using 
the compact extension property of $\ell^\infty(I)$. 
There are also direct approaches: 
$(\mathcal{KS}_r,\vert \cdot \vert_{\mathcal{KS}_r})$ 
can be shown a Banach operator ideal either by modifying the argument 
\cite[Hilfssatz 1]{Pie70} for the class of 
operators that factors boundedly through $\ell^r$, or
the argument from \cite[Proposition 1]{J71} for the case $\mathcal K_r(X,Y)$.
We leave the details for the reader.

(ii) There is an explicit characterisation of 
$\mathcal  K_r^{inj}$ in \cite[Theorem 3.6]{F18}. 
\end{remarks}

Our primary interest lies in the closed ideals
$\overline{\mathcal{KS}_r(X)}$, for which
$\mathcal A(X) \subset \overline{\mathcal{KS}_r(X)} \subset \mathcal K(X)$ 
for all Banach spaces $X$. To make our notation less cumbersome 
we have chosen to introduce 
the abbreviation $\mathcal{KS}_r =: \mathcal  K_r^{inj}$.
We note that Proposition \ref{Bfactor}.(ii) below implies that for any 
$r \neq 2$ there is a Banach space $X$ such that
\[
\overline{\mathcal  K_r(X)} \varsubsetneq \mathcal{KS}_r(X) = \mathcal  K_r^{inj}(X).
\]
Moreover, the class $\mathcal{KS}_r$ is not monotone for 
$r \in [1,\infty)$ by Proposition \ref{ideals2}.

\smallskip

We first observe that certain ideal components $\mathcal{KS}_r(X,Y)$  
and $\overline{\mathcal{KS}_r(X,Y)}$ can be identified among the 
closed subspaces of 
$\ell^p$-spaces  
thanks to Theorem \ref{KMJ} and the compact factorisation theorems  of
Figiel \cite{F} and  Bachelis  \cite{B}.

\begin{prop}\label{Bfactor}
Suppose that $p, q \in [1,\infty)$, where $p \neq 2$ and $q \neq 2$, and let $X \subset \ell^p$
and  $Y \subset \ell^q$ be arbitrary closed subspaces. Then the following holds:

\smallskip

(i) $\mathcal K(Z,Y) = \mathcal{KS}_q(Z,Y) = \overline{\mathcal{KS}_q(Z,Y)}$ for any Banach space $Z$, so that  
\[
\mathcal K(Y) = \mathcal{KS}_q(Y) = \overline{\mathcal{KS}_q(Y)}.
\]

(ii) There is a closed subspace $Z \subset \ell^p$ such that 
$\overline{\mathcal  K_p(Z)} \varsubsetneq \mathcal{KS}_p(Z)=\mathcal  K_p^{inj}(Z)$.

(iii) If $p < 2 < q$,  then 
\[
\overline{\mathcal{KS}_q(Z,X)} = \mathcal A(Z,X) \textrm{ and }
\overline{\mathcal{KS}_p(Y,Z)} = \mathcal A(Y,Z)
\]
for any Banach spaces $Z$.
\end{prop}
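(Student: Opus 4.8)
The plan is to treat the three parts in order, each time exploiting that a closed subspace of $\ell^p$ automatically factors maps through a Hilbert-type or $\ell^r$-type intermediate space, and then feeding the factorisation through either Theorem \ref{KMJ} or one of the compact factorisation theorems of Figiel \cite{F} and Bachelis \cite{B}.

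For part (i), let $Z$ be an arbitrary Banach space and $T \in \mathcal K(Z,Y)$ with $Y \subset \ell^q$. The key observation is that Bachelis' theorem \cite[Theorem 2']{B} (as already used in the proof of Theorem \ref{nil}) gives a compact factorisation $T = B_1 A_1$ through a closed subspace $Z_1 \subset \ell^q$, which is exactly the statement $T \in \mathcal{KS}_q(Z,Y)$; hence $\mathcal K(Z,Y) \subset \mathcal{KS}_q(Z,Y)$, and the reverse inclusion is trivial since $\mathcal{KS}_q \subset \mathcal K$. Since $\mathcal{KS}_q$ is a Banach operator ideal (Proposition \ref{Kfactor1}), its components sit between themselves and their norm-closures, so $\mathcal{KS}_q(Z,Y) = \overline{\mathcal{KS}_q(Z,Y)}$ follows from the equality with the (closed) space $\mathcal K(Z,Y)$. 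Taking $Z = Y$ gives the displayed identity.

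For part (ii), the point is to realise the strict inclusion $\overline{\mathcal K_p(Z)} \varsubsetneq \mathcal K_p^{\mathrm{inj}}(Z)$ on a concrete $Z$. By Proposition \ref{Kfactor1}, $\mathcal K_p^{\mathrm{inj}} = \mathcal{KS}_p$, so I want a space $Z$ carrying a compact operator that factors compactly through some closed subspace of $\ell^p$ but not through $\ell^p$ itself, even after norm-closure. The natural candidate is $Z = S \oplus \ell^p$ where $S \subset \ell^p$ is a closed subspace failing the approximation property (Facts \ref{ap1}.(i)): the inclusion-type operator $Z \to Z$ sending $(x,y) \mapsto (0, \iota x)$, where $\iota: S \hookrightarrow \ell^p$, lies in $\mathcal{KS}_p(Z)$ by construction but cannot lie in $\overline{\mathcal K_p(Z)}$, because $\mathcal K_p \subset \mathcal A$ on spaces built from $\ell^p$ would force $S$ to be approximated — more precisely, one uses that an operator factoring compactly through $\ell^p$ and landing in $S$ would be approximable, contradicting the failure of A.P. of $S$. (Here I expect to need the standard fact, via \cite[Theorem 1.e.4]{LT1}, linking failure of A.P. to the existence of a non-approximable compact operator into $S$.) The main obstacle in this part is getting the non-closure statement cleanly: one must check that no sequence of operators factoring through $\ell^p$ converges in norm to the chosen operator, which amounts to showing $\overline{\mathcal K_p(Z)} \subset \mathcal A(Z)$ fails — this should follow by combining Figiel's theorem \cite[Theorem 7.4]{F} with the observation that $\mathcal K_p$-factorisations through $\ell^p$ itself collapse into approximable operators when the target is a subspace failing A.P., but the bookkeeping of which subspace the non-approximable operator lives on needs care.

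For part (iii), assume $p < 2 < q$. For the first identity, take $T \in \mathcal{KS}_q(Z,X)$ with $X \subset \ell^p$; then $T = BA$ with $B \in \mathcal K(W,X)$ for some closed $W \subset \ell^q$. Since $W \subset \ell^q$ has type $2$ (as $2 < q$) and $X \subset \ell^p$ has cotype $2$ (as $p < 2$), Theorem \ref{KMJ} gives $B \in \mathcal A(W,X)$, hence $T = BA \in \mathcal A(Z,X)$; thus $\mathcal{KS}_q(Z,X) \subset \mathcal A(Z,X)$, and taking norm-closures (using that $\mathcal A(Z,X)$ is closed) yields $\overline{\mathcal{KS}_q(Z,X)} \subset \mathcal A(Z,X)$; the reverse inclusion is automatic since every approximable operator factors through a finite-dimensional space, which embeds (isometrically, up to $1+\varepsilon$) into any $\ell^q$, so $\mathcal A(Z,X) \subset \mathcal{KS}_q(Z,X)$. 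The symmetric identity $\overline{\mathcal{KS}_p(Y,Z)} = \mathcal A(Y,Z)$ for $Y \subset \ell^q$ is proved the same way: a $\mathcal{KS}_p$-factorisation routes through a closed subspace of $\ell^p$, which has cotype $2$, while $Y$ has type $2$, so the first factor is approximable by Theorem \ref{KMJ}. I anticipate no serious difficulty here once part (i)'s pattern is in place; the only thing to be careful about is the direction of the type/cotype hypotheses in Theorem \ref{KMJ} (type $2$ on the domain, cotype $2$ on the range) and making sure the intermediate $\ell^r$-subspace is placed on the correct side of the relevant factor.
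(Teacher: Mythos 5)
Parts (i) and (iii) of your proposal are essentially the paper's own argument and are fine: (i) is exactly the Figiel--Bachelis compact factorisation of any $T \in \mathcal K(Z,Y)$ through a closed subspace of $\ell^q$, and (iii) is the factorisation through a type-$2$ subspace $W \subset \ell^q$ followed by Theorem \ref{KMJ} applied to the second factor. One small slip in (iii): approximable operators do not factor through finite-dimensional spaces (finite-rank operators do); the inclusion $\mathcal A(Z,X) \subset \overline{\mathcal{KS}_q(Z,X)}$ that you actually need follows simply from $\mathcal F \subset \mathcal{KS}_q$ (property (BOI2)) after taking operator-norm closures.

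Part (ii) has a genuine gap. Your proposed witness $U\colon (x,y)\mapsto (0,\iota x)$ on $Z = S\oplus \ell^p$, with $\iota\colon S\hookrightarrow \ell^p$ the inclusion, is an isometry on the first summand and hence is not compact when $S$ is infinite-dimensional; it cannot belong to $\mathcal{KS}_p(Z)\subset \mathcal K(Z)$. Moreover, even a compact operator whose range lies in the $\ell^p$-summand would be approximable (since $\ell^p$ has the A.P.), hence would already lie in $\overline{\mathcal K_p(Z)}$, so no operator of this shape can separate the two ideals. Note also that the inclusion you say must fail, namely $\overline{\mathcal K_p(Z)}\subset \mathcal A(Z)$, in fact always holds, with equality: if $T=BA$ with $A\in\mathcal K(X,\ell^p)$ and $B\in\mathcal K(\ell^p,Y)$, then $BP_nA\to T$ in norm for the basis projections $P_n$ of $\ell^p$, so $\mathcal K_p\subset\mathcal A$, while $\mathcal F\subset\mathcal K_p$ gives the converse after closure. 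This identity is precisely what makes the correct argument one line: choose $Z\subset\ell^p$ with $\mathcal A(Z)\varsubsetneq\mathcal K(Z)$ --- this is Facts \ref{ap1}.(ii), not merely a subspace failing the A.P. as in Facts \ref{ap1}.(i) --- and then $\overline{\mathcal K_p(Z)}=\mathcal A(Z)\varsubsetneq\mathcal K(Z)=\mathcal{KS}_p(Z)$, the last equality by part (i).
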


\begin{proof}
(i) Suppose that $T \in \mathcal K(Z,Y)$ is arbitrary. The compact factorisation results in
\cite[Theorem 7.4]{F} or \cite[Theorem 2']{B} imply that 
there is a closed subspace $W \subset \ell^q$ together with  compact operators
$A \in \mathcal K(Z,W)$, $B \in \mathcal K(W,Y)$ such that  $T = BA$. In other words,
$\mathcal K(Z,Y) = \mathcal{KS}_q(Z,Y)$.

\smallskip

(ii) Recall from Facts \ref{ap1}.(ii) that for all $p \neq 2$ 
there is a closed subspace $Z \subset \ell^p$ such that 
$\mathcal A(Z) \varsubsetneq  \mathcal K(Z)$. Clearly 
$\overline{\mathcal  K_p(Z)} = \mathcal A(Z)$, while 
$\mathcal{KS}_p(Z) = \mathcal K(Z)$ by part (i). 

\smallskip

(iii) If $T \in \mathcal{KS}_q(Z,X)$, then there is  a closed subspace $W \subset \ell^q$
and compact operators $A \in \mathcal K(Z,W)$, $B \in \mathcal K(W,X)$ such that  $T = BA$.
Theorem \ref{KMJ} implies that here $B \in \mathcal A(W,X)$, 
since $p < 2 < q$.
The first equality follows after passing to the uniform closure. The argument 
for the second equality  is similar.
\end{proof}

In Proposition \ref{Bfactor}.(i)  the equality  $\mathcal K(Z,Y) = \mathcal{KS}_q(Z,Y)$
implies that $\vert \cdot \vert_{\mathcal{KS}_q}$
and $\Vert \cdot \Vert$ are equivalent norms on $\mathcal K(Z,Y)$ by the open mapping theorem.

\medskip

We will split our discussion of the examples of strict inclusion \eqref{nKA} 
into the cases $p, q \in [1,2)$ and $p,q \in (2,\infty)$, since they require quite different tools.
We first record for comparison the following (essentially known) 
version of Pitt's theorem for closed subspaces of 
$\ell^r$-spaces. The argument is a straightforward modification of the classical perturbation 
argument for that result, see e.g. \cite[Theorem 2.1.4]{AK} or  \cite[Proposition 2.c.3]{LT1},
and it will not be reproduced here.

\begin{prop}\label{pitt}
Suppose that $1 \le s < r < \infty$, and let $X \subset \ell^r$ and $Y \subset \ell^s$ be 
arbitrary closed subspaces. Then
\[
\mathcal L(X,Y) = \mathcal K(X,Y).
\]
This identity is also valid for closed subspaces $X \subset c_0$.  
\end{prop}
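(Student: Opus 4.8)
\emph{Proof plan.} The idea is to transcribe the classical gliding--hump/perturbation proof of Pitt's theorem to the present setting of subspaces. Fix $T\in\mathcal L(X,Y)$ and assume, for contradiction, that $T\notin\mathcal K(X,Y)$. The goal of the first step is to produce a normalized weakly null sequence $(x_n)$ in $X$ with $\inf_n\Vert Tx_n\Vert=:\delta>0$. When $X\subset\ell^r$ with $1<r<\infty$ (note $s\ge 1$ forces $r>1$), the space $X$ is reflexive, so from a sequence in $B_X$ whose $T$-image is not totally bounded we extract a weakly convergent subsequence $x_n\rightharpoonup x$, pass to a further subsequence along which $\Vert T(x_n-x)\Vert\ge\delta>0$ for some $\delta$, and renormalize $x_n-x$ (the norms stay bounded below because $T$ is bounded). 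When $X\subset c_0$, the space contains no copy of $\ell^1$, so by Rosenthal's $\ell^1$ theorem we may take the sequence in $B_X$ to be weakly Cauchy; replacing it by successive differences $z_n=x_{2n}-x_{2n-1}$ gives a weakly null sequence in $X$, and a Ramsey-type extraction (using that a non-totally-bounded set contains an infinite $\delta$-separated subset) keeps $\Vert Tz_n\Vert\ge\delta$, after which we renormalize.

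Next comes the selection argument on both ends. By the Bessaga--Pe\l czy\'nski selection principle applied inside $\ell^r$ (resp. $c_0$), a subsequence of $(x_n)$ is a basic sequence equivalent to a normalized block basic sequence of the unit vector basis, and such a block basis is isometrically equivalent to the unit vector basis of $\ell^r$ (resp. of $c_0$); hence we may assume $(x_n)$ itself is equivalent to the unit vector basis of $\ell^r$ (resp. $c_0$) and spans a copy of that space in $X$. Since $T$ is weak-to-weak continuous, $(Tx_n)$ is weakly null in $\ell^s$ and satisfies $\delta\le\Vert Tx_n\Vert\le\Vert T\Vert$, so applying Bessaga--Pe\l czy\'nski once more yields a further subsequence $(x_{n_j})$ for which $(Tx_{n_j})$ is equivalent to the unit vector basis of $\ell^s$. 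On $[x_{n_j}]$ we then get, for $z=\sum_j a_j x_{n_j}$, the comparisons $\Vert z\Vert\asymp\Vert(a_j)\Vert_{\ell^r}$ (resp. $\Vert(a_j)\Vert_{c_0}$) and $\Vert Tz\Vert\asymp\Vert(a_j)\Vert_{\ell^s}$. Because $s<r$ (resp. $s<\infty$) we have $\Vert(a_j)\Vert_{\ell^s}\ge\Vert(a_j)\Vert_{\ell^r}$ (resp. $\ge\Vert(a_j)\Vert_{c_0}$), so $T$ is bounded below on $[x_{n_j}]$, and therefore $T$ restricted to $[x_{n_j}]$ is an isomorphism onto a subspace of $Y\subset\ell^s$ that is isomorphic to $\ell^r$ (resp. $c_0$). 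This contradicts the classical fact that $\ell^r$ (resp. $c_0$) does not embed isomorphically into $\ell^s$ when $r\neq s$ ($\ell^s$ being reflexive for $s>1$ and having the Schur property for $s=1$). Hence $T\in\mathcal K(X,Y)$.

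The step I expect to require the most care is the first one in the $c_0$ case, where reflexivity is unavailable: the passage through a weakly Cauchy subsequence and its differences, together with keeping the $T$-images norm-bounded below, is the only nontrivial deviation from the textbook argument for $\mathcal L(\ell^r,\ell^s)$. The remaining steps are a routine adaptation, relying solely on the reproduction of the unit vector basis by normalized block bases in $\ell^r$ and $c_0$ and on the non-embedding of $\ell^r$ and $c_0$ into $\ell^s$; for this reason we have chosen not to write out the details in the paper.
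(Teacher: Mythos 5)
Your argument is correct and is precisely the ``straightforward modification of the classical perturbation argument'' that the paper invokes without reproducing (it cites Theorem 2.1.4 of Albiac--Kalton and Proposition 2.c.3 of Lindenstrauss--Tzafriri): extract a normalized weakly null sequence on which $T$ is bounded below in norm, use Bessaga--Pe\l czy\'nski on both ends to reproduce the unit vector bases of $\ell^r$ (resp.\ $c_0$) and of $\ell^s$, and compare the norms. The one point deserving a line of care is the case $s=1$, where the second Bessaga--Pe\l czy\'nski extraction cannot literally be performed since $\ell^1$ has the Schur property; but there the weakly null sequence $(Tx_n)$ with $\Vert Tx_n\Vert\ge\delta$ already gives the contradiction outright, so the proof stands.
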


The cases $p, q \in [1,2)$ related to  \eqref{nKA} revisit  
a factorisation theorem of Figiel \cite{F} for 
compact operators mapping into $L^p(\mu)$-spaces.
Let $X \subset \ell^p$ and $Y \subset \ell^q$ be closed subspaces.
The following result demonstrates that 
Theorem \ref{KMJ} fails to hold in the range
$1 \le p < q < 2$, and that Proposition \ref{pitt} 
does not include the class $\mathcal A(X,Y)$ for $1 \le q < p < 2$.
(Analogous remarks also apply to the cases
 $2 < q < p < \infty$ in view of  Theorem \ref{reinov}  below.)
 
 \smallskip

 \begin{thm}\label{Figiel}
(i) If   $1 \le q \le p < 2$, then 
\[
\mathcal{KS}_q(X) = \overline{\mathcal{KS}_q(X)} = \mathcal K(X)
\]
 for any closed  subspace  $X \subset \ell^p$.

(ii)  Suppose that $1 \le p, q < 2$ and $p \neq q$. Then there are closed subspaces
$X \subset \ell^p$ and $Y \subset \ell^q$  for which the strict inclusion \eqref{nKA} holds.

(iii) For $1 \le p < q < 2$ there is a closed subspace $X \subset \ell^p$ such that 
the quotient algebra
\[
\overline{\mathcal{KS}_q(X)}/\mathcal A(X)
\]
is non-nilpotent and infinite-dimensional.
\end{thm}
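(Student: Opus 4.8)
The plan is to treat the three parts separately; I expect only (iii) to require real work, with (i) and (ii) being routine once Figiel's factorisation theorem is granted.

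\emph{Part (i).} Since $\mathcal{KS}_q(X)\subseteq\mathcal K(X)$ and $\mathcal K(X)$ is norm closed, it is enough to prove $\mathcal K(X)\subseteq\mathcal{KS}_q(X)$. Because $1\le q\le p<2$, the space $\ell^p$ embeds into $L^q[0,1]$ ($p$-stable random variables when $q<p$, the trivial inclusion when $q=p$), so $X$ may be regarded as a closed subspace of $L^q$. For $T\in\mathcal K(X)$, view $T$ as a compact operator into a closed subspace of $L^q$; Figiel's compact factorisation theorem \cite[Theorem 7.4]{F} then yields a closed subspace $Z\subseteq\ell^q$ and compact operators $A\in\mathcal K(X,Z)$, $B\in\mathcal K(Z,X)$ with $T=BA$ (the compactness of $T$ allows $B$ to be corestricted so that its range lies in $X$). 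Hence $T\in\mathcal{KS}_q(X)$, and therefore $\mathcal{KS}_q(X)=\overline{\mathcal{KS}_q(X)}=\mathcal K(X)$.

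\emph{Part (ii).} Assume first that $1\le p<q<2$. By Facts \ref{ap1}.(i) there is a closed subspace $Y\subset\ell^q$ failing the A.P., so by \cite[Theorem 1.e.4]{LT1} there are a Banach space $W$ and a compact operator $S\in\mathcal K(W,Y)\setminus\mathcal A(W,Y)$. Since $\ell^q$ embeds into $L^p$ (as $p<q\le2$), $Y$ is a closed subspace of $L^p$, and applying Figiel's theorem to $S$ as in part (i) yields a closed subspace $X\subset\ell^p$ and a factorisation $S=BA$ with $A\in\mathcal K(W,X)$, $B\in\mathcal K(X,Y)$. Then $B\notin\mathcal A(X,Y)$ because $S\notin\mathcal A(W,Y)$, so \eqref{nKA} holds. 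For $1\le q<p<2$ one argues symmetrically, starting from a closed subspace of $\ell^p$ failing the A.P. and factoring the resulting non-approximable compact operator through a subspace of $\ell^q$ (again via part (i)); here $\mathcal L(X,Y)=\mathcal K(X,Y)$ already by Proposition \ref{pitt}.

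\emph{Part (iii).} Fix $1\le p<q<2$. The idea is to reproduce the telescoping construction from the proof of Theorem \ref{nil}, but inside $\ell^p$ and with every connecting operator lying in $\mathcal{KS}_q$. By part (ii) there are closed subspaces $X_0\subset\ell^p$, $Y_0\subset\ell^q$ and $\beta_0\in\mathcal K(X_0,Y_0)\setminus\mathcal A(X_0,Y_0)$; note $\beta_0\in\mathcal{KS}_q(X_0,Y_0)$ by Proposition \ref{Bfactor}.(i). Iterating the compact factorisation theorems of Bachelis \cite[Theorem 2']{B} (on the $\ell^q$ side) and Figiel \cite[Theorem 7.4]{F} (to route back through $\ell^p$, exactly as in part (i)), one aims to produce closed subspaces $X_0,Z_1,Z_2,\ldots\subset\ell^p$ and compact operators $A_n\in\mathcal K(Z_{n-1},Z_n)$ ($n\ge2$), $B_n\in\mathcal K(Z_n,X_0)$ ($n\ge1$) with
\[
B_1=B_n\circ A_n\circ A_{n-1}\circ\cdots\circ A_2\quad(n\in\mathbb N),\qquad B_1\notin\mathcal A(Z_1,X_0),
\]
\emph{and with the key extra feature that $A_n\in\mathcal{KS}_q(Z_{n-1},Z_n)$ and $B_n\in\mathcal{KS}_q(Z_n,X_0)$ for all $n$}. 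Put $X=\big(X_0\oplus(\oplus_{n\in\mathbb N}Z_n)\big)_{\ell^p}$, a closed subspace of $\ell^p$, and let $\widehat A_n,\widehat B_n\in\mathcal K(X)$ be the operators defined precisely as in the proof of Theorem \ref{nil}. Since $\mathcal{KS}_q$ is an operator ideal, $\widehat A_n\in\mathcal{KS}_q(X)$ and $\widehat B_n\in\mathcal{KS}_q(X)$, while $\widehat B_1\notin\mathcal A(X)$ (as in Theorem \ref{nil}, since $B_1\notin\mathcal A(Z_1,X_0)$); and the identity $\widehat B_1=\widehat B_n\circ\widehat A_n\circ\cdots\circ\widehat A_2$ exhibits, for every $m\in\mathbb N$, a product of $m$ elements of $\overline{\mathcal{KS}_q(X)}$ that is not approximable. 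Thus $\overline{\mathcal{KS}_q(X)}/\mathcal A(X)$ is non-nilpotent. Being a closed ideal of the radical Banach algebra $\mathfrak A_X$ it is itself radical, hence infinite-dimensional by \cite[Proposition 1.5.6.(iv)]{D00}; for real $X$ one invokes Proposition \ref{real} to ensure radicality in the sense of \eqref{rrad}.

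\emph{Where the difficulty lies.} Parts (i) and (ii) are routine modulo Figiel's theorem. The real obstacle is the clause in italics in part (iii): the tower spaces $X_0,Z_n$ must stay inside $\ell^p$ (so that $X\subset\ell^p$), yet each connecting operator must factor compactly through a closed subspace of $\ell^q$, and for $q>p$ one has $\mathcal{KS}_q(Z,Z')\varsubsetneq\mathcal K(Z,Z')$ in general for subspaces $Z,Z'\subset\ell^p$. Reconciling these competing demands requires a careful choice of the factorisations at each stage — feeding the $\ell^q$-valued ``waystation'' supplied by the current operator's $\mathcal{KS}_q$-factorisation into the next Figiel factorisation back to $\ell^p$ — and it is precisely here that the fine structure of the theorems of Figiel \cite{F} and Bachelis \cite{B}, together with part (ii), must be exploited.
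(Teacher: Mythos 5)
Parts (i) and (ii) are correct and essentially coincide with the paper's argument: (i) is the isometric embedding $X \subset \ell^p \hookrightarrow L^q(0,1)$ followed by Figiel's compact factorisation, and (ii) is obtained by factoring a compact non-approximable operator with range in a subspace of the $\ell$-space with the smaller exponent; your two symmetric constructions package the same information as the paper's single factorisation $T=BA$, from which neither factor can be approximable.

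Part (iii), however, contains a genuine gap, and you have in effect flagged it yourself. Your plan is to rebuild the telescoping tower of Theorem \ref{nil} inside $\ell^p$ while forcing every connecting operator $A_n$, $B_n$ into $\mathcal{KS}_q$, but the construction of such a tower is never carried out: the phrase ``one aims to produce'' and your closing paragraph acknowledge that the reconciliation of the two competing demands (spaces in $\ell^p$, factorisations through $\ell^q$ with $q>p$) is exactly the unproved step. Since for $p<q$ membership in $\mathcal{KS}_q(Z,Z')$ is a nontrivial restriction for subspaces $Z,Z'\subset\ell^p$ (part (i) only gives $\mathcal{KS}_p$, i.e.\ factorisation through the \emph{smaller} exponent for free), the italicised clause cannot be waved through, and as written the proof of (iii) is incomplete. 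The paper avoids the tower entirely by a one-operator trick: apply Theorem \ref{nil} in $\ell^q$ to get a closed subspace $Z\subset\ell^q$ and $T\in\mathcal K(Z)$ with $T^n\notin\mathcal A(Z)$ for all $n$; by part (i) (with the roles of the exponents as $p<q$) factor $T=BA$ compactly through a closed subspace $X\subset\ell^p$, with $A\in\mathcal K(Z,X)$ and $B\in\mathcal K(X,Z)$; then $AB\in\mathcal{KS}_q(X)$ because $AB$ factors compactly through $Z\subset\ell^q$, and $(AB)^n\notin\mathcal A(X)$ for every $n$ since $B(AB)^nA=T^{n+1}\notin\mathcal A(Z)$. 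The powers of this single element of $\mathcal{KS}_q(X)$ already witness non-nilpotence of $\overline{\mathcal{KS}_q(X)}/\mathcal A(X)$, and infinite-dimensionality then follows from radicality exactly as in your last step (via Proposition \ref{real} in the real case). I recommend replacing your part (iii) by this argument, or else supplying the missing inductive construction in full.
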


\begin{proof}
(i)  Let  $1 \le q \le p < 2$ and $X \subset \ell^p$ be a closed subspace. Suppose that 
$T \in \mathcal K(X)$ is arbitrary.  It is a classic fact that 
$\ell^p$ embeds isometrically into $L^q(0,1)$ 
for $1 \le q \le p < 2$, see e.g. \cite[Theorem 6.4.18]{AK}, 
so that  $X$ embeds isometrically into $L^q(0,1)$. 
Hence it follows from \cite[Theorem 7.4]{F} that $T \in  \mathcal K(X)$ factors 
compactly through a  closed subspace of $\ell^q$, that is,
 there is a closed subspace $Z \subset \ell^q$
as well as  compact operators $A \in \mathcal K(X,Z)$ and $B \in \mathcal K(Z,X)$ so that
$T = BA$. In other words,
$\mathcal K(X) = \mathcal{KS}_q(X) = \overline{\mathcal{KS}_q(X)}$. 

\smallskip

(ii)  Suppose that  $1 \le q < p < 2$. 
According to  Facts \ref{ap1}.(ii) there is  a 
closed subspace $X \subset \ell^p$ that carries a compact non-approximable operator 
$T \in  \mathcal K(X) \setminus \mathcal A(X)$. By part (i) we know that 
$T \in \mathcal{KS}_q(X) = \mathcal K(X)$, so there is 
a closed subspace $Z \subset \ell^q$ together with 
$A \in \mathcal K(X,Z)$, $B \in \mathcal K(Z,X)$, 
so that $T = BA$. Here neither $A$ nor $B$ can be  approximable operators, 
so that
\begin{equation}\label{p2}
\mathcal A(X,Z) \varsubsetneq \mathcal K(X,Z)  \textrm{ and } 
\mathcal A(Z,X) \varsubsetneq \mathcal K(Z,X).
\end{equation}
The first strict inclusion gives the claim for $1 \le q < p < 2$, while 
the claim for $1 \le p < q < 2$ follows from the second strict inclusion in 
\eqref{p2} after exchanging the roles of $p$ and $q$. (In fact, part (iii) 
contains a much stronger result for $1 \le p < q < 2$.)

\smallskip

(iii) Let $1 \le p < q < 2$. 
In view of Theorem \ref{nil} there is   a closed subspace
$Z \subset \ell^q$ and a compact operator $T \in \mathcal K(Z)$ 
for which $T^n \notin \mathcal A(Z)$ for any $n \in \mathbb N$. 
Since $1 \le p < q < 2$,  
we know that $T \in \mathcal{KS}_p(Z)$ by part (i), so there is a closed subspace 
$X \subset \ell^p$
and a factorisation  $T = BA$, where $A \in \mathcal K(Z,X)$ and $B \in \mathcal K(X,Z)$. 
Here $AB \in \mathcal {KS}_q(X)$, because 
$AB$ factors compactly through $Z \subset \ell^q$.
Moreover,  $(AB)^n \notin \mathcal A(X)$ for any 
$n \in \mathbb N$, since 
\[
T^{n+1} = (BA)^{n+1} = B(AB)^nA \notin \mathcal A(X).
\]

For complex scalars the conclusion that 
$\overline{\mathcal{KS}_q(X)}/\mathcal A(X)$ is infinite-dimensional 
follows from general  Banach algebra theory. Namely,  
the quotient $\overline{\mathcal{KS}_q(X)}/\mathcal A(X)$ is a 
closed ideal of the radical Banach algebra  $\mathcal K(X)/\mathcal A(X)$, 
so that  $\overline{\mathcal{KS}_q(X)}/\mathcal A(X)$
is also a radical algebra. Moreover, since  $(AB)^n \notin \mathcal A(X)$ for any 
$n \in \mathbb N$,  it follows that $\overline{\mathcal{KS}_q(X)}/\mathcal A(X)$ is non-nilpotent.
We conclude that the radical quotient algebra  $\overline{\mathcal{KS}_q(X)}/\mathcal A(X)$
is infinite-dimensional by  \cite[Proposition 1.5.6(iv)]{D00}.

For real scalars we apply Proposition \ref{real} to 
the closed ideals $\mathcal A(X) \subset  \overline{\mathcal{KS}_q(X)}$ of
$\mathcal K(X)$ and get that 
the real quotient algebra $\overline{\mathcal{KS}_q(X)}/\mathcal A(X)$
is radical in the sense of \eqref{rrad}. 
Since $\overline{\mathcal{KS}_q(X)}/\mathcal A(X)$ 
is non-nilpotent by construction, 
we may also apply \cite[Proposition 1.5.6(iv)]{D00} in the real case to
deduce that $\overline{\mathcal{KS}_q(X)}/\mathcal A(X)$ is infinite-dimensional.
\end{proof}

The uniform closures $\overline{\mathcal{KS}_r(Z)}$ 
provide examples of non-trivial closed ideals of
$\mathcal K(Z)$
for certain Banach spaces $Z$. This also
gives an alternative approach to particular instances of the examples
contained in Theorem \ref{ideals}. 

\begin{prop}\label{ideals2}
Suppose that $1 \le p < 2 < q < \infty$, and use Facts \ref{ap1}.(ii) to pick closed subspaces
$X \subset \ell^p$ and $Y \subset \ell^q$ 
for which $\mathcal A(X) \varsubsetneq \mathcal K(X)$ and
$\mathcal A(Y) \varsubsetneq \mathcal K(Y)$. Then 
\begin{equation}\label{Kr}
\mathcal A(X\oplus Y) \varsubsetneq \overline{\mathcal{KS}_r(X\oplus Y)} \varsubsetneq 
\mathcal K(X\oplus Y)
\end{equation}
for $r = p$ and $r = q$, where $\overline{\mathcal{KS}_p(X\oplus Y)}$ and  
$\overline{\mathcal{KS}_q(X\oplus Y)}$ are incomparable ideals.

In addition, $\mathcal{KS}_p(X\oplus Y)$ and  
$\mathcal{KS}_q(X\oplus Y)$ are also incomparable, so the
classes $\mathcal{KS}_r$ are not monotone for $r \in [1,\infty)$. 
\end{prop}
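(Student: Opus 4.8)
The plan is to reduce everything to a componentwise analysis in the $2\times 2$ operator-matrix representation of $\mathcal K(X\oplus Y)$, and then to read off the relevant corner components from Proposition~\ref{Bfactor}. Since $(\mathcal{KS}_r,\vert\cdot\vert_{\mathcal{KS}_r})$ is a Banach operator ideal (Proposition~\ref{Kfactor1}), an operator $U\in\mathcal K(X\oplus Y)$ lies in $\mathcal{KS}_r(X\oplus Y)$ if and only if every entry $U_{ij}$ lies in $\mathcal{KS}_r$; approximating the four entries separately in the operator norm and recombining, one also gets
\[
\overline{\mathcal{KS}_r(X\oplus Y)}=\left(\begin{array}{cc}\overline{\mathcal{KS}_r(X)}&\overline{\mathcal{KS}_r(Y,X)}\\ \overline{\mathcal{KS}_r(X,Y)}&\overline{\mathcal{KS}_r(Y)}\end{array}\right).
\]
In particular, writing $\widetilde S$ for the operator matrix on $X\oplus Y$ whose only non-zero entry is an operator $S$ placed in the $(1,1)$- or $(2,2)$-corner, one has $\widetilde S\in\overline{\mathcal{KS}_r(X\oplus Y)}$ iff $S\in\overline{\mathcal{KS}_r(X)}$ (resp.\ $\overline{\mathcal{KS}_r(Y)}$), and likewise without closures. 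So the whole proposition is decided by the two diagonal corner components.

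\textbf{Identifying the corners.} I would invoke Proposition~\ref{Bfactor}: as $X\subset\ell^p$, part (i) gives $\mathcal{KS}_p(X)=\overline{\mathcal{KS}_p(X)}=\mathcal K(X)$, and as $p<2<q$, part (iii) gives $\overline{\mathcal{KS}_q(X)}=\mathcal A(X)$; symmetrically $\mathcal{KS}_q(Y)=\overline{\mathcal{KS}_q(Y)}=\mathcal K(Y)$ and $\overline{\mathcal{KS}_p(Y)}=\mathcal A(Y)$. By hypothesis we may fix $U\in\mathcal K(X)\setminus\mathcal A(X)$ and $V\in\mathcal K(Y)\setminus\mathcal A(Y)$, with corresponding corner matrices $\widetilde U$, $\widetilde V$ on $X\oplus Y$.

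\textbf{The chains \eqref{Kr} and incomparability.} For $r=p$: $\widetilde U\in\mathcal{KS}_p(X\oplus Y)\subset\overline{\mathcal{KS}_p(X\oplus Y)}$ but $\widetilde U\notin\mathcal A(X\oplus Y)$, giving the first strict inclusion in \eqref{Kr}; and $\widetilde V\in\mathcal K(X\oplus Y)$ while its $(2,2)$-entry $V\notin\mathcal A(Y)=\overline{\mathcal{KS}_p(Y)}$, so $\widetilde V\notin\overline{\mathcal{KS}_p(X\oplus Y)}$, giving the second. The case $r=q$ is identical after interchanging $X\leftrightarrow Y$ and $U\leftrightarrow V$. (That $\overline{\mathcal{KS}_r(X\oplus Y)}$ is a closed two-sided ideal of $\mathcal K(X\oplus Y)$ is automatic, being the uniform closure of an operator ideal.) For incomparability of the closed ideals, the $(1,1)$-corner of $\overline{\mathcal{KS}_p(X\oplus Y)}$ is $\mathcal K(X)$, which is not contained in the $(1,1)$-corner $\mathcal A(X)$ of $\overline{\mathcal{KS}_q(X\oplus Y)}$, so $\overline{\mathcal{KS}_p(X\oplus Y)}\not\subset\overline{\mathcal{KS}_q(X\oplus Y)}$; comparing $(2,2)$-corners gives the opposite non-inclusion. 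The same two operators separate the non-closed ideals: $\widetilde U\in\mathcal{KS}_p(X\oplus Y)$, but if $\widetilde U\in\mathcal{KS}_q(X\oplus Y)$ then $U\in\mathcal{KS}_q(X)\subset\overline{\mathcal{KS}_q(X)}=\mathcal A(X)$, a contradiction, and symmetrically for $\widetilde V$. Hence $\mathcal{KS}_p(X\oplus Y)$ and $\mathcal{KS}_q(X\oplus Y)$ are incomparable, and since $p<q$ this rules out both directions of monotonicity for $r\mapsto\mathcal{KS}_r$ on $[1,\infty)$.

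\textbf{Main obstacle.} There is no genuine obstacle here: once Proposition~\ref{Bfactor} is available — and that is where the real content, Theorem~\ref{KMJ} together with the Figiel/Bachelis compact factorisation theorems, is used — the argument is bookkeeping. The only point requiring a line of care is the componentwise description of $\mathcal{KS}_r(X\oplus Y)$ and of its uniform closure, namely that norm-approximation of a $2\times2$ operator matrix is equivalent to simultaneous norm-approximation of its four entries and that the coordinate inclusions and projections can be composed through the approximating sequences inside $\mathcal{KS}_r$; both are immediate from the operator-ideal axioms (BOI1)--(BOI3).
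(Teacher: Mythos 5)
Your proposal is correct and follows essentially the same route as the paper: both reduce to the $2\times 2$ operator-matrix components, identify the diagonal corners via Proposition~\ref{Bfactor}.(i) and (iii), and separate the two ideals (and their closures) with corner operators $U\in\mathcal K(X)\setminus\mathcal A(X)$ and $V\in\mathcal K(Y)\setminus\mathcal A(Y)$. The only difference is that you spell out the (correct) identification of $\overline{\mathcal{KS}_r(X\oplus Y)}$ with the matrix of closed components, which the paper uses implicitly.
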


\begin{proof}
Since $\mathcal{KS}_p$ is a Banach  operator ideal it follows from parts (i) and (iii) of Proposition \ref{Bfactor} that the components of the closure $\overline{\mathcal{KS}_p(X\oplus Y)}$
satisfy
\[
\overline{\mathcal{KS}_p(X\oplus Y)} = 
 \left( \begin{array}{ccc}
\mathcal K(X) &  \mathcal A(Y,X)  \\
\overline{\mathcal{KS}_p(X,Y)}  &  \mathcal A(Y) \\
\end{array} \right).
\]
The assumptions on the diagonal components imply that \eqref{Kr} holds.

For $r = q$ one similarly get from parts (i) and (iii) of Proposition \ref{Bfactor} that
\[
\overline{\mathcal{KS}_q(X\oplus Y)}  = 
\left( \begin{array}{ccc}
\mathcal A(X) &  \mathcal A(Y,X)  \\
 \mathcal K(X,Y)  &  \mathcal K(Y) \\
\end{array} \right).
\]
Our assumptions on $X$ and $Y$ again 
yield that $\overline{\mathcal{KS}_q(X\oplus Y)}$ is a non-trivial closed ideal of 
$\mathcal K(X \oplus Y)$, which cannot be compared to 
$\overline{\mathcal{KS}_p(X\oplus Y)}$.

For $\mathcal{KS}_p(X\oplus Y)$ the respective components satisfy
$\mathcal{KS}_p(X) = \mathcal K(X)$
and $\mathcal{KS}_p(Y) \subset \mathcal A(Y)$ by parts (i) and (iii) of Proposition \ref{Bfactor}.
Similarly, $\mathcal{KS}_q(X) \subset \mathcal A(X)$ and 
$\mathcal{KS}_q(Y) = \mathcal K(Y)$, so the classes $\mathcal{KS}_p(X\oplus Y)$ and  
$\mathcal{KS}_q(X\oplus Y)$ cannot be compared.
\end{proof}

The strict inclusion \eqref{nKA} for the cases  $p,q \in (2,\infty)$ 
involves non-classical approximation properties associated to certain Banach operator ideals.
Let $1 \le p < \infty$ and let  $X$ be a Banach space. We denote by
$\ell^p_s(X)$ the vector space of the strongly $p$-summable sequences in $X$, 
and by $\ell^p_w(X)$ that of the weakly $p$-summable sequences in $X$. Recall that
$\ell^p_s(X)$ and $\ell^p_w(X)$  are Banach spaces equipped with their respective natural  norms,
\begin{align*}
\Vert (x_k) \Vert_{p}  & = \big( \sum_{k=1}^\infty  \Vert x_k\Vert^p\big)^{1/p}, \quad (x_k) \in 
\ell^p_s(X),\\
\Vert (x_k) \Vert_{p,w} & = \Big(\sup_{x^* \in B_{X^*}} \sum_{k=1}^\infty \vert  x^*(x_k)\vert^p\Big)^{1/p}, \quad  (x_k) \in \ell^p_w(X).
\end{align*}

We will require the following Banach operator ideals.
Let $1 \le p < \infty$, and $X, Y$ be Banach spaces. 
Recall from \cite[18.1 and 18.2]{Pie} that $U \in \mathcal L(X,Y)$ is \textit{$p$-nuclear},
denoted  $U \in \mathcal N_p(X,Y)$, if  there is a scalar sequence
$(\sigma_j) \in \ell^p$, a bounded sequence $(x^*_j) \subset X^*$,
and a weakly $p'$-summable sequence $(y_j) \in \ell^{p'}_w(Y)$ such that 
\begin{equation}\label{pn}
Ux = \sum_{j=1}^\infty \sigma_j x_j^*(x)y_j, \quad x \in X.
\end{equation}
Here $p'$ is the dual exponent of $p$. The $p$-nuclear norm is  
\[
\vert U\vert_{\mathcal N_{p}} = \inf 
\{ \Vert (\sigma_j)\Vert_{\ell^p} \cdot  \Vert (x_j^*)\Vert_\infty \cdot \Vert (y_j)\Vert_{p',w}:
\eqref{pn} \textrm{ holds}\}.
\]
It is known that  $(\mathcal N_p,\vert \cdot \vert_{\mathcal N_{p}})$ is a Banach operator ideal,
see \cite[18.1 and 18.2]{Pie}.

Following Persson and Pietsch \cite[section 4]{PP}, the operator 
$U \in \mathcal L(X,Y)$ is \textit{quasi $p$-nuclear}, denoted $U \in \mathcal{QN}_p(X,Y)$, 
if there is a strongly $p$-summable sequence $(x^*_j) \in \ell^p_s(X^*)$ such that 
\begin{equation}\label{qn}
\Vert Ux\Vert \le \big( \sum_{j=1}^\infty  \vert x_j^*(x)\vert^p \big)^{1/p}, \quad  x \in X.
\end{equation}
Then $(\mathcal{QN}_p,\vert \cdot \vert_{\mathcal QN_{p}} )$ is also a Banach operator ideal, where 
\[
\vert U\vert_{\mathcal QN_{p}}  = \inf \{ \Vert (x_j^*) \Vert_{p} : 
\eqref{qn} \textrm { holds for } (x^*_j)\}.
\]
We will require the following facts:  $\mathcal{QN}_p = \mathcal{N}_p^{inj}$,
see  \cite[Satz 39]{PP}, and
$\mathcal{QN}_p \subset \mathcal{QN}_q$
for $p < q$, see \cite[Satz 24]{PP}. Moreover, 
\begin{equation}\label{qnfactor}
\mathcal{QN}_p(X,Y) \subset \mathcal {KS}_p(X,Y)
\end{equation}
for any $1 \le p < \infty$ and any  $X$ and $Y$, see 
the proof of \cite[Lemma 5]{PP}.
Another proof is obtained by  combining Proposition \ref{Kfactor1} with
\cite[Theorem 6]{Pie14} and the inclusion displayed on line 9 of \cite[page 529]{Pie14}.

We say that $T \in \mathcal L(X,Y)$ is a \textit{Sinha-Karn $p$-compact} operator, 
denoted $T \in \mathcal{SK}_p(X,Y)$, if there is 
a strongly $p$-summable sequence $(x_n) \in \ell^p_s(X)$ such that
\[
T(B_X)  \subset \big \{\sum_{n=1}^\infty a_nx_n: (a_n) \in B_{\ell^{p'}}\big \}.
\] 
There is a complete norm $\vert \cdot \vert_{\mathcal{SK}_p}$ on $\mathcal{SK}_p(X,Y)$
such that $(\mathcal{SK}_p,\vert \cdot \vert_{\mathcal{SK}_p})$ is a Banach operator ideal,
see e.g. \cite[Theorem 4.2]{SK} or \cite[Theorem 1]{Pie14}. 
This class was introduced by Sinha and Karn \cite{SK}, and we adopt the 
above terminology and the notation $\mathcal{SK}_p$ in order to distinguish 
it from the (historically earlier) class $\mathcal K_p$ of the $p$-compact operators.  
The relationship between 
 $\mathcal{SK}_p$ and  $\mathcal K_p$ is discussed in \cite[section 3.2]{O12} and 
 \cite[page 529]{Pie14}.
Moreover, it holds that $\mathcal{SK}_p(X,Y) \subset \mathcal K(X,Y)$ 
for any $1 \le p < \infty$ and any spaces $X$ and $Y$,
see e.g. \cite[page 949]{O12}.  
We will also require the following duality relation 
(see \cite[Proposition 3.8]{DPS10} as well as
\cite[Theorem 7]{Pie14} or \cite[Corollary 2.7]{GLT}).

\begin{fact}\label{skp}
Let  $1 \le p < \infty$,  and $X,Y$ be Banach spaces. Then 
\[
T \in \mathcal{SK}_p(X,Y) \textrm{ if and only if }  T^* \in \mathcal{QN}_p(Y^*,X^*).
\]
\end{fact}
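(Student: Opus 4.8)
The plan is to prove the two implications separately; throughout, $k_Y\colon Y\to Y^{**}$ and $k_X\colon X\to X^{**}$ denote the canonical isometric embeddings. For the implication $T\in\mathcal{SK}_p(X,Y)\Rightarrow T^*\in\mathcal{QN}_p(Y^*,X^*)$ I would argue directly. Let $T(B_X)\subseteq\{\sum_n a_ny_n:(a_n)\in B_{\ell^{p'}}\}$ for a strongly $p$-summable sequence $(y_n)$ in the range space $Y$. Given $y^*\in Y^*$ and $x\in B_X$, write $Tx=\sum_n a_ny_n$ with $\Vert(a_n)\Vert_{p'}\le 1$; then H\"older's inequality gives
\[
\vert\langle T^*y^*,x\rangle\vert=\vert\langle y^*,Tx\rangle\vert\le\Vert(a_n)\Vert_{p'}\Big(\sum_n\vert\langle y^*,y_n\rangle\vert^p\Big)^{1/p}\le\Big(\sum_n\vert\langle y^*,y_n\rangle\vert^p\Big)^{1/p}.
\]
Taking the supremum over $x\in B_X$ yields $\Vert T^*y^*\Vert\le(\sum_n\vert\langle k_Yy_n,y^*\rangle\vert^p)^{1/p}$ for all $y^*\in Y^*$, which is exactly the condition \eqref{qn} witnessing $T^*\in\mathcal{QN}_p(Y^*,X^*)$ via the sequence $(k_Yy_n)\in\ell^p_s(Y^{**})$; optimising over representations also gives $\vert T^*\vert_{\mathcal{QN}_p}\le\vert T\vert_{\mathcal{SK}_p}$.

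For the converse, suppose $T^*\in\mathcal{QN}_p(Y^*,X^*)$ with witnessing sequence $(\phi_j)\in\ell^p_s(Y^{**})$, so $\Vert T^*y^*\Vert\le(\sum_j\vert\langle\phi_j,y^*\rangle\vert^p)^{1/p}$ for all $y^*$. I would first note that $T$ is compact: by \eqref{qnfactor} we have $T^*\in\mathcal{KS}_p(Y^*,X^*)$, hence $T^*$ is compact (operators in $\mathcal{KS}_p$ factor compactly through a subspace of $\ell^p$), and so $T$ is compact by Schauder's theorem. The defining inequality says that $T^*=\Theta\Delta$, where $\Delta\colon Y^*\to\overline{\Delta(Y^*)}\subseteq\ell^p$ is the diagonal map $y^*\mapsto(\langle\phi_j,y^*\rangle)_j$ and $\Vert\Theta\Vert\le 1$. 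Dualising, $T^{**}=\Delta^*\Theta^*$; composing $\Delta^*$ with the canonical quotient map $q\colon\ell^{p'}\to(\overline{\Delta(Y^*)})^*$ yields $\Delta^*q\colon\ell^{p'}\to Y^{**}$, $a\mapsto\sum_j a_j\phi_j$, and lifting $\Theta^*(k_Xx)$ through the metric surjection $q$ shows that for every $x\in B_X$ and $\varepsilon>0$ there is $(a_j)\in(1+\varepsilon)B_{\ell^{p'}}$ with $k_Y(Tx)=T^{**}(k_Xx)=\sum_j a_j\phi_j$. Since the $p$-hull $\{\sum_j b_j\phi_j:(b_j)\in B_{\ell^{p'}}\}$ is norm compact (the sequence $(\phi_j)$ being strongly $p$-summable), the $(1+\varepsilon)$-loss is absorbed, and we conclude $k_YT\in\mathcal{SK}_p(X,Y^{**})$.

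The main obstacle is the remaining \emph{descent} from $Y^{**}$ to $Y$: we must replace $(\phi_j)\subseteq Y^{**}$ by a strongly $p$-summable sequence \emph{in $Y$} with the analogous property for $T$. This genuinely requires the compactness of $T$, since $\mathcal{SK}_p$ is not injective in general and there is no automatic passage $\mathcal{SK}_p(X,Y^{**})\to\mathcal{SK}_p(X,Y)$. One approach: given $\varepsilon>0$, truncate $(\phi_j)$ at an index $N$ so that the tail is $\ell^p_s$-small; then the compact set $k_Y(\overline{T(B_X)})$ lies within $\varepsilon$ of the finite-dimensional subspace $[\phi_1,\dots,\phi_N]\subseteq Y^{**}$, and the Principle of Local Reflexivity — applied to this subspace, to a finite $\varepsilon$-norming set of functionals on $\overline{T(B_X)}$, and to a finite $\varepsilon$-net of $\overline{T(B_X)}$ — produces $y_1,\dots,y_N\in Y$ with $\Vert y_j\Vert\le(1+\varepsilon)\Vert\phi_j\Vert$ for which $\overline{T(B_X)}$ lies within $O(\varepsilon)$ of the $p$-hull of $(y_1,\dots,y_N)$. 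Running this for $\varepsilon=2^{-k}$, scaling the blocks so the errors form a strongly $p$-summable sequence and concatenating, yields a single $(y_j)\in\ell^p_s(Y)$ with $T(B_X)\subseteq\{\sum_j a_jy_j:(a_j)\in B_{\ell^{p'}}\}$, i.e.\ $T\in\mathcal{SK}_p(X,Y)$. A cleaner variant replaces this by the following: using $\mathcal{QN}_p=\mathcal N_p^{\mathrm{inj}}$ together with the $w^*$-to-$w^*$ continuous isometric embedding $\iota\colon X^*\to\ell^\infty(B_X)$, $x^*\mapsto(x^*(x))_{x\in B_X}$, one has $T^*\in\mathcal{QN}_p$ iff $\iota T^*\in\mathcal N_p(Y^*,\ell^\infty(B_X))$; since $\iota T^*$ is itself an adjoint operator, one can select a $p$-nuclear representation of it whose functional legs are $w^*$ continuous, hence lie in $k_Y(Y)$, and then the factorisation $X\to\ell^{p'}\to Y$ witnessing $T\in\mathcal{SK}_p$ drops out at once. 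All of this aside, the proof is a routine H\"older-and-duality exercise; it is only the $Y^{**}\to Y$ descent that carries real content.
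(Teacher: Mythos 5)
The paper does not actually prove this statement: it is quoted as a known \emph{Fact} with references to \cite[Proposition 3.8]{DPS10}, \cite[Theorem 7]{Pie14} and \cite[Corollary 2.7]{GLT}, so there is no in-paper argument to compare yours with. Your forward implication is correct and complete: the H\"older estimate shows that $(k_Yy_n)\in\ell^p_s(Y^{**})$ witnesses \eqref{qn} for $T^*$, and this is indeed the routine half of the cited results. Your reduction of the converse is also sound up to and including the conclusion $k_YT\in\mathcal{SK}_p(X,Y^{**})$ (compactness of $T$ via \eqref{qnfactor} and Schauder, the factorisation $T^*=\Theta\Delta$, the pointwise lifting through the metric surjection $q$, and the absorption of the $(1+\varepsilon)$-loss using closedness of the $p$-hull).

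The genuine gap is exactly where you place it, the descent from $Y^{**}$ to $Y$, and neither of your two routes closes it. In route (a), local reflexivity does produce, for each $\varepsilon>0$, finitely many $y_1,\dots,y_N\in Y$ with $\bigl(\sum_j\Vert y_j\Vert^p\bigr)^{1/p}\le(1+\varepsilon)\Vert(\phi_j)\Vert_p$ and $T(B_X)\subseteq\{\sum_ja_jy_j:(a_j)\in B_{\ell^{p'}}\}+C\varepsilon B_Y$. But the concluding concatenation does not go through: to scale the blocks into a single strongly $p$-summable sequence you need the covering of the $k$-th remainder set to have $\ell^p_s$-norm comparable to the (small) diameter of that remainder, whereas your construction only yields, for every remainder, coverings of the fixed size $\approx\Vert(\phi_j)\Vert_p$ (a remainder is contained in a difference of two such hulls plus a small ball, and the Minkowski sum of two $p$-hulls has $\ell^p_s$-norm of the order of the sum of the norms, not of the approximation accuracy). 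The implication ``within $\varepsilon$ of finite $p$-hulls of uniformly bounded $\ell^p_s$-norm for every $\varepsilon$ $\Rightarrow$ relatively $p$-compact'' is precisely the nontrivial content of \cite[Corollary 3.6 and Proposition 3.8]{DPS10} (and, isometrically, of \cite{GLT}); it is not a formal iteration. Route (b) is worse: the assertion that a $p$-nuclear operator which happens to be an adjoint admits a $p$-nuclear representation with $w^*$-continuous functional legs is unjustified, and it is exactly the kind of duality statement for $\mathcal N_p$ that Reinov's examples \cite{Rei82} (revisited in Section 3 of this paper) show can fail; moreover that route never uses the compactness of $T$, which must enter essentially. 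For the converse you should therefore either cite one of the listed references or reproduce their argument in full, rather than rely on the sketched iteration.
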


Let $(\mathcal I,\vert \cdot \vert_{\mathcal I})$ be a Banach operator ideal  such that
$\mathcal I \subset \mathcal K$,  that is, 
$\mathcal I(X,Y) \subset \mathcal K(X,Y)$ for all spaces $X$ and $Y$.
We will say that the Banach space $X$ has the \textit{uniform $\mathcal I$-approximation property}
(abbreviated uniform $\mathcal I$-A.P.), if 
\begin{equation}\label{uiap}
\mathcal I(Y,X) \subset \overline{\mathcal F(Y,X)} 
 = \mathcal A(Y,X)
\end{equation}
holds for all Banach spaces $Y$.  

Special instances of the uniform $\mathcal I$-A.P. have recently 
been studied (under varying terminology) 
for various Banach operator ideals $\mathcal I$, see e.g. \cite{SK},
\cite{DOPS}, \cite{CK}, \cite{LT} and \cite{Kim19}. For example,
Sinha and Karn \cite{SK} introduced (an equivalent version of) the 
uniform $\mathcal{SK}_p$-A.P. as the $p$-approximation property,
see \cite[Theorem 2.1]{DOPS} for  the
equivalence with \eqref{uiap}.
We have found it convenient to slightly modify the terminology 
suggested by Lassalle and Turco  \cite[page 2460]{LT} 
in order to distinguish the uniform $\mathcal I$-A.P. from the following property 
considered e.g. by Oja \cite{O12}:
the Banach space $X$ is said to have the \textit{$\mathcal I$-approximation property} 
($\mathcal I$-A.P.) if 
\[
\mathcal I(Y,X) = \overline{\mathcal F(Y,X)}^{\vert \cdot \vert_{\mathcal I}} 
\]
holds for all Banach spaces $Y$. Note that if $X$ has the $\mathcal I$-A.P., then $X$ 
also has the uniform $\mathcal I$-A.P., since $\Vert \cdot \Vert \le \vert \cdot \vert_{\mathcal I}$.
The classical A.P. coincides with the uniform 
$\mathcal K$-A.P. as well as the $\mathcal K$-A.P.
We will here (mostly) be concerned with the uniform $\mathcal I$-A.P., since we are 
interested in closed ideals of $\mathcal K(X)$.

\smallskip 

We will require the following auxiliary results, which connect the failure of the 
uniform $\mathcal I$-A.P. for
certain Banach operator ideals $\mathcal I$ 
to the existence of non-approximable operators in $\mathcal {KS}_p$.
In parts (ii) and (iii) the uniform 
$\mathcal{SK}_p$-A.P. is only relevant for 
$2 < p < \infty$, since any Banach space $X$ has this property 
for $1 \le p \le 2$, see \cite[Theorem 6.4]{SK} or \cite[Corollary 2.5]{DOPS}.
Recall for Banach operator ideals $\mathcal I$ and $\mathcal J$ that $\mathcal I
\subset \mathcal J$ means that $\mathcal I(X,Y) \subset \mathcal J(X,Y)$
for all $X, Y$, and $\vert \cdot \vert_{\mathcal J} \le \vert \cdot \vert_{\mathcal I}$.

\begin{lma}\label{kpap}
(i) Let $1 \le p < \infty$ and suppose that the Banach space $X$ 
fails to have the uniform $\mathcal I$-A.P. for some
Banach operator ideal $\mathcal I \subset \mathcal{KS}_p$. Then there is a closed subspace 
$Z \subset \ell^p$ such that 
\[
\mathcal A(Z,X) \varsubsetneq \mathcal K(Z,X).
\]

(ii) Let  $2 < p < \infty$ and suppose that $X$ 
fails to have the uniform $\mathcal{SK}_p$-A.P.
Then there is a closed subspace $Z \subset \ell^p$ such that 
\[
\mathcal A(X^*,Z) \varsubsetneq \mathcal K(X^*,Z).
\]

(iii) Let  $2 < p < \infty$ and suppose that  $X$ 
fails to have the uniform $\mathcal{SK}_p$-A.P.
Then there is a closed subspace $Z \subset \ell^p$ such that 
\[
\mathcal A(X^{**},Z) \varsubsetneq \mathcal K(X^{**},Z).
\]
If  $X$ is reflexive, then $\mathcal A(X,Z) \varsubsetneq \mathcal K(X,Z)$.
\end{lma}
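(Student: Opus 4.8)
The plan is to treat the three parts uniformly: in each case a failure of an approximation property produces a compact operator $T$ lying in a Banach operator ideal contained in $\mathcal{KS}_p$ but not in $\mathcal A$, and then the compact factorisation of $T$ through a closed subspace of $\ell^p$ afforded by $\mathcal{KS}_p = \mathcal K_p^{inj}$ (Proposition \ref{Kfactor1}) forces one of the two factors to be non-approximable, which is exactly the asserted strict inclusion. For (i) this is immediate: by definition of the uniform $\mathcal I$-A.P. there are a Banach space $Y$ and $T \in \mathcal I(Y,X) \setminus \mathcal A(Y,X)$; since $\mathcal I \subseteq \mathcal{KS}_p$ we may write $T = BA$ with $A \in \mathcal K(Y,Z)$, $B \in \mathcal K(Z,X)$ and $Z \subseteq \ell^p$ a closed subspace, and were $B$ approximable the ideal property of $\mathcal A$ would give $T = BA \in \mathcal A(Y,X)$; hence $B \in \mathcal K(Z,X) \setminus \mathcal A(Z,X)$. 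Thus all the genuine content of (i) sits in Proposition \ref{Kfactor1} and in the factorisation theorems behind it.

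For (ii), failure of the uniform $\mathcal{SK}_p$-A.P. of $X$ gives a Banach space $W$ and $T \in \mathcal{SK}_p(W,X) \setminus \mathcal A(W,X)$. By Fact \ref{skp} the adjoint satisfies $T^* \in \mathcal{QN}_p(X^*,W^*)$, hence $T^* \in \mathcal{KS}_p(X^*,W^*)$ by \eqref{qnfactor}, so $T^* = BA$ with $A \in \mathcal K(X^*,Z)$, $B \in \mathcal K(Z,W^*)$ and $Z \subseteq \ell^p$ a closed subspace. I claim $A \notin \mathcal A(X^*,Z)$, which yields $\mathcal A(X^*,Z) \varsubsetneq \mathcal K(X^*,Z)$: otherwise $T^* = BA \in \mathcal A(X^*,W^*)$, and then $T$ itself would be approximable, contradicting the choice of $T$. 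The implication ``$T$ compact and $T^*$ approximable $\Rightarrow$ $T$ approximable'' is the one step here that is not purely formal, and I expect it to be the main obstacle, shared by (ii) and (iii).

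I would deduce that implication from the principle of local reflexivity. If $T^*$ is approximable then so is $T^{**}$, and hence so is $\kappa_X T = T^{**}\kappa_Y : Y \to X^{**}$. Given $\varepsilon > 0$, pick a finite $\varepsilon$-net $k_1, \dots, k_m$ of the compact set $\overline{T(B_Y)} \subseteq X$ and a finite-rank operator $G : Y \to X^{**}$ with $\Vert \kappa_X T - G \Vert < \varepsilon$, set $F = \mathrm{span}\{\kappa_X k_1, \dots, \kappa_X k_m\} + G(Y) \subseteq X^{**}$, and let $u : F \to X$ be a local-reflexivity operator with $\Vert u \Vert \le 1 + \varepsilon$ acting as the identity on $F \cap \kappa_X(X)$. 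Then $uG : Y \to X$ is finite-rank, and for $y \in B_Y$, choosing $i$ with $\Vert Ty - k_i \Vert < \varepsilon$, one obtains $\Vert Ty - uGy \Vert \le \Vert Ty - k_i \Vert + \Vert u \Vert \, \Vert \kappa_X k_i - Gy \Vert = O(\varepsilon)$; hence $T \in \mathcal A(Y,X)$.

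For (iii), I would run the argument of (ii) at the level of the bidual. First, $X^{**}$ also fails the uniform $\mathcal{SK}_p$-A.P.: if $T(B_W)$ lies in the $p$-hull $\{\, \sum_n a_n x_n : (a_n) \in B_{\ell^{p'}} \,\}$ of some $(x_n) \in \ell^p_s(X)$, then for $2 < p < \infty$ this set is norm-compact, hence weak$^*$-compact and weak$^*$-closed in $X^{**}$, so $T^{**}(B_{W^{**}}) \subseteq \{\, \sum_n a_n \kappa_X x_n : (a_n) \in B_{\ell^{p'}} \,\}$ with $(\kappa_X x_n) \in \ell^p_s(X^{**})$, whence $T^{**} \in \mathcal{SK}_p(W^{**}, X^{**})$; moreover $T^{**}$ is compact and, by the equivalence of the preceding paragraph, again non-approximable. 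Running the argument of (ii) once more, now starting from $T^{**}$ and using in addition that every closed subspace $Z \subseteq \ell^p$ is reflexive (so that the preduals entering the construction are quotients of $\ell^{p'}$), then yields a closed subspace $Z \subseteq \ell^p$ with $\mathcal A(X^{**}, Z) \varsubsetneq \mathcal K(X^{**}, Z)$. When $X$ is reflexive, $X = X^{**}$ and the final assertion follows.
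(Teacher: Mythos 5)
Parts (i) and (ii) of your proposal are correct and coincide with the paper's argument: the factorisation through $\mathcal{KS}_p$ plus the ideal property of $\mathcal A$ gives (i), and in (ii) the passage to $T^*\in\mathcal{QN}_p(X^*,W^*)\subset\mathcal{KS}_p(X^*,W^*)$ followed by the observation that $T^*$ cannot be approximable is exactly what the paper does; your local-reflexivity proof of the implication ``$T$ compact and $T^*$ approximable $\Rightarrow$ $T$ approximable'' is the standard argument that the paper simply cites from \cite[Propositions 2.5.1--2.5.2]{CS}.

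Part (iii), however, has a genuine gap. What you prove is that $X^{**}$ fails the uniform $\mathcal{SK}_p$-A.P., witnessed by $T^{**}\in\mathcal{SK}_p(W^{**},X^{**})\setminus\mathcal A(W^{**},X^{**})$. That is an operator \emph{into} $X^{**}$, and ``running the argument of (ii)'' on it produces, via Fact \ref{skp}, the operator $(T^{**})^*=T^{***}\in\mathcal{QN}_p(X^{***},W^{***})$; factoring this through a closed subspace $Z\subset\ell^p$ yields a non-approximable compact operator defined on $X^{***}$, i.e.\ the conclusion $\mathcal A(X^{***},Z)\varsubsetneq\mathcal K(X^{***},Z)$ --- one dual too high. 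To obtain a non-approximable compact operator \emph{from} $X^{**}$ by the scheme of part (ii) you need an operator in $\mathcal{SK}_p(\,\cdot\,,X^*)\setminus\mathcal A(\,\cdot\,,X^*)$, that is, the failure of the uniform $\mathcal{SK}_p$-A.P.\ for $X^*$ rather than for $X^{**}$. This descent from $X$ to $X^*$ is the one non-formal ingredient of (iii), and it is exactly what the paper imports: the duality theorem of Choi and Kim \cite[Theorem 2.7]{CK}, after which part (ii) applied to $X^*$ gives $\mathcal A(X^{**},Z)\varsubsetneq\mathcal K(X^{**},Z)$. Your bidual computation (weak$^*$-closedness of the norm-compact $p$-hull, non-approximability of $T^{**}$) is correct in itself but moves in the wrong direction along the duality chain, and the parenthetical appeal to reflexivity of subspaces of $\ell^p$ does not repair this. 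The final assertion for reflexive $X$ is then immediate once the main claim of (iii) is in place, as you say.
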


\begin{proof}
(i)  From the failure of \eqref{uiap} there is a Banach space $Y$ and an operator 
$U \in \mathcal I(Y,X)$ such that 
$U \notin \mathcal A(Y,X)$. Since 
$U \in \mathcal I(Y,X) \subset \mathcal{KS}_p(Y,X)$ by assumption,
there is a closed subspace $Z \subset \ell^p$ and a compact factorisation $U = BA$,
where $A \in \mathcal K(Y,Z)$ and $B \in \mathcal K(Z,X)$. 
Here  $B \notin \mathcal A(Z,X)$.

\smallskip

(ii) If $X$ fails to have the uniform $\mathcal{SK}_p$-A.P., then from
\eqref{uiap} there is a Banach space $Y$ and 
an operator $T \in \mathcal{SK}_p(Y,X) \setminus \mathcal A(Y,X)$.
From Fact \ref{skp}  and the inclusion \eqref{qnfactor} we have
\[
T^* \in \mathcal{QN}_p(X^*,Y^*) \subset \mathcal{KS}_p(X^*,Y^*).
\]
Hence there is a closed subspace $Z \subset \ell^p$ and a compact factorisation 
$T^* = BA$ through $Z$.  Note that 
$T^*$ cannot be approximable. Namely, if $T^*$ were approximable $X^{*} \to Y^*$,
then  the principle of local reflexivity implies that 
$T$ is also approximable, see e.g. \cite[Propositions 2.5.1-2.5.2]{CS} or 
\cite[Theorem 11.7.4]{Pie}. We get that 
$\mathcal A(X^*,Z) \varsubsetneq \mathcal K(X^*,Z)$, 
since $A \in \mathcal K(X^*,Z)$ cannot be an approximable operator.

(iii)  If $X$ fails to have the uniform $\mathcal{SK}_p$-A.P., then
$X^*$ also fails  this property by 
the duality result in \cite[Theorem 2.7]{CK}.
Part (ii) implies that there is a closed subspace $Z \subset \ell^p$ such that 
$\mathcal A(X^{**},Z) \varsubsetneq \mathcal K(X^{**},Z).$
\end{proof}

The desired examples of a strict inclusion  \eqref{nKA} for $p,q \in (2,\infty)$ 
revisits an intricate construction  of Reinov \cite{Rei82}
concerning the failure of duality for $p$-nuclear operators, 
which we reinterpret in terms of the uniform $\mathcal{QN}_p$-A.P.
Reinov's argument in part (i) of \cite[Lemma 1.1 and Corollary 1.1]{Rei82} 
covers the case $p = q \in (2,\infty)$, which extends by monotonicity 
to $2 < q < p < \infty$ (see the argument of Theorem \ref{reinov}).  
For $2 < p < q < \infty$ 
we somewhat modify Reinov's construction, and in the interest of readability 
we have added details to the condensed explanation in \cite{Rei82}. 
Recall that $\mathcal{QN}_p \subset \mathcal A$ for $1 \le p \le 2$
by monotonicity and \cite[18.1.8 and 18.1.4]{Pie}, 
so any Banach space $X$ has the uniform $\mathcal{QN}_p$-A.P. for $p \in [1,2]$.

\begin{thm}\label{reinov}
Let $2<p, q<\infty$ and $p \neq q$. Then there exists a closed subspace 
$Y\subset\ell^q$ such that $Y$ fails to have the uniform $\mathcal{QN}_p$-A.P.,
and a closed subspace $X \subset \ell^p$ for which
\[
\mathcal A(X,Y) \varsubsetneq \mathcal K(X,Y).
\]
\end{thm}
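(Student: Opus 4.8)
The plan is to split the statement into its two halves: first, the existence of a closed subspace $Y\subset\ell^q$ failing the uniform $\mathcal{QN}_p$-A.P., and second — once such a $Y$ is available — the existence of a closed subspace $X\subset\ell^p$ with $\mathcal A(X,Y)\varsubsetneq\mathcal K(X,Y)$. The second half is immediate from what we already have: by the inclusion \eqref{qnfactor} we have $\mathcal{QN}_p\subset\mathcal{KS}_p$, so Lemma \ref{kpap}(i), applied with the Banach operator ideal $\mathcal I=\mathcal{QN}_p$ and with the space there taken to be $Y$, produces a closed subspace $X\subset\ell^p$ such that $\mathcal A(X,Y)\varsubsetneq\mathcal K(X,Y)$. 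Thus the whole burden is to construct $Y$, and here I would distinguish two cases according to the relative size of $p$ and $q$ (recall $p\neq q$ and $p,q>2$; for $p\le 2$ the property is vacuous, since $\mathcal{QN}_p\subset\mathcal A$).

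\emph{Case $2<q<p<\infty$.} Here I would invoke Reinov's construction in the form of part (i) of \cite[Lemma 1.1 and Corollary 1.1]{Rei82}, which yields, in our terminology, a closed subspace $Y\subset\ell^q$ together with a Banach space $W$ and an operator $U\in\mathcal{QN}_q(W,Y)\setminus\mathcal A(W,Y)$; equivalently, $Y$ fails the uniform $\mathcal{QN}_q$-A.P. Since $q<p$, the monotonicity $\mathcal{QN}_q\subset\mathcal{QN}_p$ from \cite[Satz 24]{PP} gives $U\in\mathcal{QN}_p(W,Y)\setminus\mathcal A(W,Y)$, so the same $Y$ already fails the uniform $\mathcal{QN}_p$-A.P.

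\emph{Case $2<p<q<\infty$.} Now monotonicity points the wrong way, and I would instead revisit Reinov's construction itself and adapt it so that the ambient sequence space is $\ell^q$ while the non-approximable operator produced is $p$-nuclear (hence quasi-$p$-nuclear), rather than merely $q$-nuclear. Concretely, one assembles, in the Enflo--Davie style, a block sequence of finite-dimensional pieces spanning a closed subspace $Y\subset\ell^q$ that fails the A.P., together with a series $U=\sum_n U_n$ of block operators $U_n\colon W\to Y$ whose partial sums are finite rank; writing each $U_n$ in $p$-nuclear form $\sum_j\sigma_j^{(n)}\,x_j^{*}\otimes y_j$ and collecting the pieces, one must arrange the coefficient sequence to lie in $\ell^p$ — the decisive constraint, since $\ell^p\subsetneq\ell^q$ — while keeping the functional part bounded and the $Y$-valued part weakly $p'$-summable (this last requirement is the \emph{easier} one, as $p'>q'$, and is essentially inherited from the $\ell^q$-geometry of $Y$), and \emph{simultaneously} retaining the Enflo-type lower bound that rules out finite-rank approximation. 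This forces a careful choice of the block dimensions and weights: tapering the weights to make $\sum_n$ converge in the $p$-nuclear norm competes with the A.P.-failure estimate, and one compensates by letting the dimensions grow fast enough. I expect precisely this balancing — showing $U$ is genuinely $p$-nuclear while remaining non-approximable — to be the main technical obstacle; the non-approximability estimate and the general bookkeeping can be taken over from \cite{Rei82} and the standard references on subspaces of $\ell^q$ without the A.P.

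Combining the two cases, in all situations $p\neq q$ one obtains a closed subspace $Y\subset\ell^q$ failing the uniform $\mathcal{QN}_p$-A.P., and the second half of the argument then delivers the closed subspace $X\subset\ell^p$ with $\mathcal A(X,Y)\varsubsetneq\mathcal K(X,Y)$, completing the proof.
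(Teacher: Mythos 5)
Your overall architecture matches the paper exactly: the reduction of the second claim to Lemma \ref{kpap}(i) via the inclusion $\mathcal{QN}_p\subset\mathcal{KS}_p$ is precisely what the paper does, and the case $2<q<p$ (Reinov's result for the exponent $q$ plus the monotonicity $\mathcal{QN}_q\subset\mathcal{QN}_p$) is also identical. The gap is in the case $2<p<q$, which is the substantive content of the theorem: you correctly identify that Reinov's construction must be adapted so that the ambient space is $\ell^q$ while the non-approximable operator is quasi $p$-nuclear, but you do not carry out the adaptation. You list the competing constraints and then declare the balancing to be ``the main technical obstacle'' without resolving it; as written, this case is a plan rather than a proof.

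It is also worth noting that the paper resolves the difficulty by a device lighter than the block-by-block bookkeeping you envisage. One takes Davie's matrix $A$ with $A^2=0$, $\mathrm{tr}\,A=1$ and $\lambda_n=\sup_k|a_{nk}|$ satisfying $\sum_n\lambda_n^{\alpha}<\infty$ for all $\alpha>2/3$, writes $A=\Delta V$ with $\Delta$ the diagonal operator with weights $(\lambda_n)$, and splits $\Delta=\Delta_2\, i\,\Delta_1$, where $\Delta_1:\ell^\infty\to\ell^p$ is the diagonal with weights $\lambda_n^{1-\alpha}$, $i:\ell^p\to\ell^q$ is the inclusion, and $\Delta_2:\ell^q\to\ell^1$ has weights $\lambda_n^{\alpha}$. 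The choice $2/3<\alpha<1-\tfrac{2}{3p}$ (possible because $p>2$) makes $(\lambda_n^{1-\alpha})\in\ell^p$ and $(\lambda_n^{\alpha})\in\ell^1$, so $\Delta_1$ is $p$-nuclear and its restriction to $Y_0=\overline{V\ell^1}$ is quasi $p$-nuclear into $Y=\overline{i\Delta_1 Y_0}\subset\ell^q$; the non-approximability of the resulting operator $U$ is then a trace-duality argument using $A^2=0$ and $\mathrm{tr}\,A=1$, taken over from Reinov, not a fresh Enflo-type lower-bound estimate with new block dimensions. So the step you leave open is genuinely the crux, and its resolution is a reweighting of Davie's diagonal combined with the inclusion $\ell^p\hookrightarrow\ell^q$ rather than a new block construction.
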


\begin{proof}
We first consider the details for $2 < p < q < \infty$, where  
we will exhibit (by closely following Reinov's outline) 
a closed subspace $Y\subset\ell^q$ together with 
an operator 
\[
U\in \mathcal{QN}_p(Z,Y)\setminus \mathcal A(Z,Y)
\]
for some Banach space $Z$. 
The second claim follows immediately from Lemma \ref{kpap}.(i), since 
$\mathcal{QN}_p \subset \mathcal{KS}_p$ by \eqref{qnfactor}. Hence
the compact factorisation $U = BA$ through some closed subspace 
$X \subset \ell^p$ provides an operator 
$B \in \mathcal K(X,Y) \setminus \mathcal A(X,Y)$.

\smallskip

The starting point is the construction of Davie \cite{Da73} (see also \cite[Section 2.d]{LT1}):
There is a matrix $A=(a_{ij})_{i,j=1}^\infty$ with the following properties: 
\begin{enumerate}[(i)]
    \item $A^2=0$, 
    \item tr $A=\sum_{k=1}^\infty a_{kk}=1$ and 
    \item $\sum_{n=1}^\infty \lambda_n^\alpha<\infty$ for all $\alpha>2/3$, where 
    $\lambda_n=\sup_{k\in\mathbb N}|a_{nk}|>0$ for all $n\in\mathbb N$.
\end{enumerate}
The matrix $A$ defines a $1$-nuclear operator  
$\ell^1\to\ell^1$ through 
\[Ax=(\sum_{n=1}^\infty a_{kn}x_n)_{k=1}^\infty,\quad x=(x_n)\in \ell^1.\]
Define bounded operators $V:\ell^1\to\ell^\infty$ and $\Delta:\ell^\infty\to\ell^1$ as follows:
\[
Vx=(\lambda_{k}^{-1}\sum_{n=1}^\infty a_{kn}x_n)_{k=1}^\infty, \quad x=(x_n)\in \ell^1,\
\textrm{ and }\ 
\Delta x=(\lambda_kx_k),\quad x=(x_k)\in\ell^\infty.
\] 
Clearly $A=\Delta V$, where $\Delta=\sum_{n=1}^\infty \lambda_ne_n\otimes e_n$ is a $1$-nuclear diagonal operator. Here $(e_n)$ denotes the unit vector basis in $\ell^1$ (and
subsequently also  in  $\ell^r$ for appropriate $r$). 
It follows from (i) and (ii) that
\begin{enumerate}
\item[(iv)]    $(\Delta V)^2=0$ and 
\item[(v)] tr$(\Delta V)=\sum_{k=1}^\infty \langle \Delta Ve_k,e_k\rangle = 
\sum_{k=1}^\infty a_{kk} =1$.
\end{enumerate} 

Let $\frac{2}{3}<\alpha<1-\frac{2}{3p}$ (this is possible because $p > 2$)
and put $\lambda_n^{(1)}=\lambda_n^{1-\alpha}$ and $\lambda_n^{(2)}=\lambda_n^\alpha$ 
for all $n\in\mathbb N$. It follows from  (iii) 
that $(\lambda_n^{(1)})\in\ell^p$ and
$(\lambda_n^{(2)})\in\ell^1$. 
Consider the following commuting diagram:

\begin{center}
$\xymatrix{
\operatorname{\ell^1}\ar[rr]^V\ar[d]^{V_1}&&\operatorname{\ell^\infty}\ar[rr]^\Delta\ar[d]^{\Delta_1}&&\operatorname{\ell^1}&\\
\operatorname{\emph Z}\ar[r]^{\tilde V}\ar[rrd]^U&\operatorname{\emph Y_0}\ar[ru]^{j_1}\ar[rd]^{\tilde\Delta_1}&\operatorname{\ell^\emph p}\ar[r]^{i}&\operatorname{\ell^\emph q}\ar[ru]^{\Delta_2}\\
&&\operatorname{\emph Y}\ar[ru]^{j_2}
}
$
\end{center}

\noindent Here $Z=\ell^1/\ker V$ and $Y_0=\overline{V\ell^1} \subset \ell^\infty$; 
the operators $V_1$ and $\tilde V$ are induced by $V$ and $j_1$ is the isometric inclusion
map; 
\[
\Delta_1=\sum_{k=1}^\infty\lambda_k^{(1)}e_k\otimes e_k \textrm{ and } 
\Delta_2=\sum_{k=1}^\infty\lambda_k^{(2)}e_k\otimes e_k,
\]
and $i$ is the canonical inclusion; $Y=\overline{i\Delta_1 j_1 Y_0} \subset \ell^q$ and $\tilde\Delta_1=j_2^{-1}i\Delta_1j_1$,
where $j_2$ is the isometric inclusion map and  $U=\tilde\Delta_1\tilde V$. 

Clearly $\Delta_1\in \mathcal N_p(\ell^\infty,\ell^p)\subset \mathcal{QN}_p(\ell^\infty,\ell^p)$, and it is easy to check by using \eqref{qn}
that the restriction $\tilde\Delta_1\in \mathcal{QN}_p(Y_0,Y)$. Deduce that 
$U =\tilde\Delta_1\tilde V \in \mathcal{QN}_p(Z,Y)$. 
Next we verify that $U$ is not an approximable operator. Towards this 
define the bounded functional $\Phi \in \mathcal L(Z,Y)^*$ by
\[
\Phi(S) =  \sum_{k=1}^\infty \lambda_k^{(2)}\langle SV_1e_k,j_2^*e_k\rangle 
\textrm{ for } S \in \mathcal L(Z,Y).
\]

\textit{Claim:}   $\Phi(T)=0$ for all $T\in \mathcal A(Z,Y)$. 

\smallskip

By linearity and continuity it will be enough to show that 
$\Phi(z^*\otimes y)=0$ for all $z^*\in Z^*$ and $y\in Y$. Moreover, by the definition of $Y$ 
we may also (by continuity) assume that $y\in j_2^{-1}i\Delta_1V\ell^1$. 
Thus there is $a\in\ell^1$ such that $y=j_2^{-1}i\Delta_1 Va$.  We get that
\begin{align*}
\Phi(z^*&\otimes y)=\sum_{k=1}^\infty \lambda_k^{(2)}z^*(V_1e_k)\langle y,j_2^*e_k\rangle=\sum_{k=1}^\infty \lambda_k^{(2)}z^*(V_1e_k)\langle i\Delta_1 Va,e_k\rangle\\
&=z^*\Big(V_1\big(\sum_{k=1}^\infty \langle\lambda_k^{(2)}i\Delta_1Va,e_k\rangle e_k\big)\Big)=z^*(V_1\Delta_2i\Delta_1 Va)=z^*(V_1\Delta Va)=0.
\end{align*}
The last equality follows from the fact that $V_1\Delta V=0$. Namely, by (iv) 
we have $0 = \Delta V \Delta V  = \Delta j_1 \tilde V V_1\Delta V$, where 
$\Delta j_1 \tilde V$ is injective.
Thus $\Phi(T)=0$ for all $T\in \mathcal A(Z,Y)$. On the other hand,
condition (v) implies that
\[
\Phi(U)=\sum_{k=1}^\infty \lambda_k^{(2)}\langle UV_1e_k,j_2^*e_k\rangle=\sum_{k=1}^\infty \langle \lambda_k^{(2)}j_2UV_1e_k,e_k\rangle=\sum_{k=1}^\infty\langle \Delta Ve_k,e_k\rangle=1.
\]
Hence $U\notin \mathcal A(E,F)$, while $U \in \mathcal{QN}_p(Z,Y)$.

\smallskip

Suppose next that  $2 < q < p < \infty$. According to part (i) of 
\cite[Lemma 1.1 and Corollary 1.1]{Rei82} 
there is a closed subspace $Y \subset \ell^q$ together with an operator
$U \in \mathcal{QN}_q(Z,Y) \setminus \mathcal A(Z,Y)$, 
where $Z$ is a suitable Banach space.
Since $q < p$ we get from the monotonicity of the classes 
$\mathcal{QN}_r$ and \eqref{qnfactor}  that 
\[
U \in \mathcal{QN}_q(Z,Y) \subset \mathcal{QN}_p(Z,Y) \subset \mathcal{KS}_p(Z,Y).
\]
Thus $Y$ fails the uniform $\mathcal{QN}_p$-A.P.
Moreover, there is a closed subspace $X \subset \ell^p$ and a compact factorisation
$U = BA$ through $X$. In particular, $B \in \mathcal K(X,Y) \setminus \mathcal A(X,Y)$
is the desired operator.
\end{proof} 

\begin{remarks} 
(i) The diagram of Reinov \cite[page 127]{Rei82}  for the cases
$p = q \in (2,\infty)$ is similar to the one displayed above.  
Our diagram adds the inclusion map $i: \ell^p \to \ell^q$,
but removes the DFJP-factorisation of $V$ through a reflexive space, which is not relevant
for our purposes.  The argument of Reinov produces a closed subspace 
$Y\subset\ell^p$ such that $Y$ fails the uniform $\mathcal{QN}_p$-A.P.,
which strengthens Facts \ref{ap1}.(i) for $p > 2$ (see also Example \ref{kim} below).

\smallskip
 
(ii) It is possible to recover part of Theorem \ref{reinov} 
for certain combinations of $p, q \in (2,\infty)$ 
from an example of Choi and Kim \cite{CK} concerning the uniform 
$\mathcal {SK}_p$-A.P. (Their example is also based on 
Davie's construction  \cite{Da73}.) In fact, for $2 < p < \infty$ and $q > \frac{2p}{p-2}$
there is  in view of \cite[Corollary 2.9]{CK} a closed subspace $X \subset \ell^q$ 
that fails the uniform $\mathcal{SK}_p$-A.P. 
Hence Lemma \ref{kpap}.(iii)  provides 
a closed subspace $Y \subset \ell^p$ for which 
$\mathcal A(X,Y) \varsubsetneq \mathcal K(X,Y)$.
\end{remarks}

\medskip

After these preparations we return to the setting of  Theorem  \ref{ideals}
and the question of how many non-trivial closed ideals we may find
in $\mathfrak{A}_Z$ for spaces $Z$ belonging to that class of direct sums. 
We  proceed step by step with a series of examples,
and first display  a direct sum $Z = X \oplus Y$
for which $\mathfrak{A}_Z$ has at least three closed ideals.

\begin{ex}\label{ideals3} 
Suppose that $2 < p < \infty$ and $q > 2p/(p-2)$. Then there are closed subspaces 
$X \subset \ell^p$ and  $Y\subset \ell^q$ such that the quotient algebra 
$\mathfrak A_{X^*\oplus Y}$ contains (at least) three non-trivial closed ideals. 
\end{ex}

\begin{proof}
From \cite[Corollary 2.9]{CK} we find a closed subspace 
$X_0\subset\ell^q$ that fails the uniform $\mathcal{SK}_p$-A.P. 
According to part (ii) of Lemma \ref{kpap} there is a closed subspace  $Y_0\subset\ell^p$
together with an operator 
\begin{equation}\label{S}
S\in\mathcal K(X_0^*,Y_0)\setminus\mathcal A(X^*_0,Y_0).
\end{equation}
Next we use Facts \ref{ap1}.(ii) to pick closed subspaces 
$M\subset\ell^q$ and $N \subset \ell^p$ such that $\mathfrak A_M\neq\{0\}$
and $\mathfrak A_N\neq\{0\}$. Put
\[
X=X_0\oplus M\text{ and } Y=Y_0\oplus N.
\] 
Thus $X$ is a closed subspace
of $\ell^q$, where $q > 2$, and $Y$ a closed subspace
of $\ell^p$, so that both $X$ and $Y$ have type 2. 
It follows from known facts about type and cotype
that $X^* = X_0^* \oplus M^*$ has cotype 2,  see e.g. \cite[Proposition 11.10]{DJT}. 
Since $\mathfrak A_M\neq\{0\}$ and  $\mathfrak A_N\neq\{0\}$, the quotient algebras 
$\mathfrak A_{X}$ and $\mathfrak A_{Y}$  are both non-trivial by 
\cite[Proposition 4.2]{TW}. Finally, since $\mathfrak A_{X}$ embeds isometrically 
into $\mathfrak A_{X^*}$ by duality  \cite[Proposition 4.1]{TW}, we also know that
$\mathfrak A_{X^*}\neq\{0\}$.
Altogether $X^*$ and $Y$ satisfy the conditions of Theorem  \ref{ideals}. 

 Let $Z = X^* \oplus Y$, and let $\mathcal I$ and $\mathcal J$ be the closed ideals 
of $\mathcal K(Z)$ obtained in Theorem  \ref{ideals}, for which 
$q(\mathcal I)$ and $q(\mathcal J)$ define two non-trivial incomparable closed ideals 
of $\mathfrak A_{Z}$.

\medskip

\textit{Claim}.  $\mathcal I \cap \mathcal J = \left( \begin{array}{ccc}
\mathcal A(X^*) & \mathcal A(Y,X^*) \\
\mathcal K(X^*,Y)  &  \mathcal A(Y) \\
\end{array} \right) $ is a third non-trivial ideal of 
$\mathcal K(X^* \oplus Y)$.

\medskip

In fact,  $\mathcal I \cap \mathcal J \varsubsetneq \mathcal I$  and 
$\mathcal I \cap \mathcal J \varsubsetneq \mathcal J$, since 
$\mathcal A(X^*)  \varsubsetneq \mathcal K(X^*)$ and 
$\mathcal A(Y)  \varsubsetneq \mathcal K(Y)$ by construction. 
Pick $S\in\mathcal K(X_0^*,Y_0)\setminus\mathcal A(X^*_0,Y_0)$ by (\ref{S}), 
and let $\tilde S=:JSP: X^* \to Y$, where
$P:X^*\to X_0^*$ is the natural projection and $J:Y_0\to Y$ is the inclusion map.
Hence $\tilde S \in \mathcal K(X^*,Y) \setminus\mathcal A(X^*,Y)$, and 
\[
\left( \begin{array}{ccc}
0 & 0\\
\tilde S & 0\\
\end{array} \right)
\in \big( \mathcal I \cap \mathcal J\big)  \setminus \mathcal A(X^* \oplus Y).
\]
Consequently, by Proposition \ref{quotient} 
we get the  non-trivial closed ideal  $q(\mathcal I)\cap q(\mathcal J)$  of 
$\mathfrak A_{Z}$, which differs from both $q(\mathcal I)$ and $q(\mathcal J)$. 

We note in passing that $q(\mathcal I)\cap q(\mathcal J)$ is a nilpotent ideal of  
$\mathfrak A_{Z}$, that is, 
if $U, V \in \mathcal I \cap \mathcal J$, then 
$UV \in \mathcal A(X \oplus Y)$. In fact, by \eqref{prod} the component  
\[
(UV)_{2,1} = U_{21}V_ {11} + U_{22}V_{21} \in \mathcal A(X,Y),
\]
since $V_{11}$ and $U_{22}$ are approximable operators. 
\end{proof}

Let $Z = \ell^p$, where $1 \le p < \infty$ and $ p \neq 2$, or $Z = c_0$.
 A classical result of Gohberg, Markus and Feldman 
says that   $\mathcal K(Z)$ is the unique non-trivial closed ideal in $\mathcal L(Z)$, see
e.g. \cite[Section 5.2]{Pie}. It is relevant to ask whether there are 
non-trivial closed ideals
\[
\mathcal A(X) \varsubsetneq \mathcal J \varsubsetneq  \mathcal K(X)
\]
among the closed subspaces $X \subset Z$. 
Our next two results demonstrate  that this is indeed the case  (at least) for $p > 4$
and for $c_0$. For subspaces of  $\ell^p$ this  is based on 
\cite[Corollary 2.9]{CK} and Theorem \ref{nil}, as well as 
properties of quasi $p$-nuclear operators. We stress that the resulting direct sums  
do \textit{not} belong to the class of spaces in Theorem \ref{ideals}.

\begin{ex}\label{newideal}
Let $p>2$ and $q>2p/(p-2)$. By combining \cite[Corollary 2.9]{CK}  and 
\cite[Theorem 2.7]{CK}, there is a closed subspace $X\subset\ell^q$ such that $X^*$ fails
the uniform  $\mathcal{SK}_p$-A.P. By definition there is an operator 
$T\in\mathcal{SK}_p(X_0,X^*)\setminus\mathcal A(X_0,X^*)$ for some Banach space $X_0$. Consequently $T^*\in\mathcal{QN}_p(X,X_0^*)\setminus\mathcal A(X,X_0^*)$  
by Fact \ref{skp} and the reflexivity of $X$. 
 According to the proof of \cite[Lemma 5]{PP}  there is a factorisation $T^*=BA$ through a closed subspace $Y\subset\ell^p$ 
such that $A\in\mathcal{QN}_p(X,Y)$. Moreover, by Theorem \ref{nil} 
there is a closed subspace $Z\subset\ell^p$ together with a compact operator 
$S\in\mathcal K(Z)$ such that 
\begin{equation}\label{examplereference}
S^n\notin\mathcal A(Z)\text{ for all }n\in\mathbb N.
\end{equation}
We consider the closed subspace $W=X\oplus Y\oplus Z$ of $\ell^q\oplus\ell^p$
and claim that
\begin{equation}\label{ideal}
\mathcal A(W)\subsetneq \overline{\mathcal{QN}_p(W)}\subsetneq \mathcal K(W),
\end{equation}
\begin{equation}\label{dideal}
\mathcal A(W^*)\subsetneq \overline{\mathcal{SK}_p(W^*)}\subsetneq \mathcal K(W^*).\end{equation}

In particular, for $p=q>4$ there is a closed subspace $W\subset\ell^p$ 
such that (\ref{ideal}) holds.
\end{ex}

\begin{proof}
First note that $A$ is not approximable, since otherwise $T^*$ would also be approximable. 
Thus $A\in\overline{\mathcal{QN}_p(X,Y)}\setminus \mathcal A(X,Y)$, which implies 
for $W=X\oplus Y\oplus Z$ that
\[
\mathcal A(W)\subsetneq\overline{\mathcal{QN}_p(W)}.
\]  
We claim that $S\in\mathcal K(Z)\setminus\overline{\mathcal{QN}_p(Z)}$ 
(which immediately yields that $\overline{\mathcal{QN}_p(W)}\subsetneq \mathcal K(W)$).
Suppose to the contrary that $S\in\overline{\mathcal{QN}_p(Z)}$ and let 
$(R_n)\subset\mathcal{QN}_p(Z)$ be a sequence such that 
\begin{equation}\label{sequence}
||R_n-S||\to 0\text{ as }n\to\infty.
\end{equation} 
Recall from \cite[section 7]{PP} that $UV \in \mathcal{QN}_r$ 
whenever $U \in \mathcal{QN}_s$ and 
$V \in \mathcal{QN}_t$ are compatible operators for which $1/r=1/s+1/t\le 1$. Fix an integer $m$ such that $p/2\le m<p$. 
By iterating the above product formula 
we get that $R_n^m\in\mathcal{QN}_{p/m}(Z)$ for all $n\in\mathbb N$.
Since 
\begin{equation}\label{qnp}
\mathcal{QN}_{p/m}\subset\mathcal{QN}_2= \mathcal N_2\subset\mathcal A
\end{equation}
by monotonicity and \cite[18.1.8 and 18.1.4]{Pie}, 
the operators $R_n^m\in\mathcal A(Z)$ for all $n\in\mathbb N$.
Conclude from (\ref{sequence}) that $S^m\in\mathcal A(Z)$, which contradicts
(\ref{examplereference}). Thus $S\notin\overline{\mathcal{QN}_p(Z)}$.

The strict inclusions in \eqref{dideal} follow by duality.
In fact, since $W$ is reflexive,  Fact \ref{skp} implies 
that $T \in \overline{\mathcal{QN}_p(W)}$ if and only if 
$T^* \in \overline{\mathcal{SK}_p(W^*)}$.
Finally, note that it is possible to choose $p = q$ for $p > 4$ and $q > 2p/(p-2)$, 
in which case  $W$ is a closed subspace of $\ell^p$. 
\end{proof}

In the analogous example of a closed subspace $X \subset c_0$ we may simultaneously 
involve the closures 
$\overline{\mathcal{QN}_p(X)}$  and $\overline{\mathcal{SK}_p(X)}$, and the
details are somewhat different. 

\begin{ex}\label{c0v2}
Let $2<p<\infty$. Then there is a closed subspace $X\subset c_0$ such that
\[\mathcal A(X)\subsetneq \overline{\mathcal{SK}_p(X)}\subsetneq \mathcal K(X)\text{ and }\mathcal A(X)\subsetneq \overline{\mathcal{QN}_p(X)}\subsetneq\mathcal K(X),
\]
where $\overline{\mathcal{SK}_p(X)}$ and $\overline{\mathcal{QN}_p(X)}$ are incomparable closed ideals.
\end{ex}
\begin{proof} We establish the claim in two parts.

\smallskip

\emph{Claim 1.} There are closed subspaces $M_0$ and $M_1$ of $c_0$ such that \begin{equation}\label{Henriklabel1}
\overline{\mathcal{SK}_p(M_0,M_1)}\not\subset \overline{\mathcal{QN}_p(M_0,M_1)}.\end{equation}

\emph{Claim 2.} There are closed subspaces $M_2$ and $M_3$ of $c_0$ such that \begin{equation}\label{Henriklabel2}
\overline{\mathcal{QN}_p(M_2,M_3)}\not\subset \overline{\mathcal{SK}_p(M_2,M_3)}.\end{equation}

\smallskip

We may then take
\[
X=M_0\oplus M_1\oplus M_2\oplus M_3\subset c_0.
\]
Denote $\mathcal I=\overline{\mathcal{SK}_p(X)}$ and $\mathcal J=\overline{\mathcal{QN}_p(X)}$.
Then (\ref{Henriklabel1}) implies that $\mathcal I\not\subset\mathcal J$ and (\ref{Henriklabel2}) implies that $\mathcal J\not\subset\mathcal I$. Hence $\mathcal I$ and $\mathcal J$ are incomparable closed ideals. Since $\mathcal I$ and $\mathcal J$ 
lie between $\mathcal A(X)$ and $\mathcal K(X)$, we get the strict inclusions 
$\mathcal A(X)\subsetneq\mathcal I\subsetneq\mathcal K(X)$ 
and  $\mathcal A(X)\subsetneq \mathcal J\subsetneq\mathcal K(X)$.

\smallskip

\emph{Proof of Claim 1.} Let $F$ be a reflexive Banach space with type 2 that fails the uniform 
$\mathcal{SK}_p$-A.P. \cite[Corollary 2.9]{CK}. In view of 
\cite[Theorem 2.7]{CK} the dual $F^*$ also fails the uniform $\mathcal{SK}_p$-A.P. 
By definition there is an operator $T\in\mathcal{SK}_p(E,F^*)\setminus \mathcal A(E,F^*)$ 
for a suitable Banach space $E$. 

According to \cite[Proposition 2.9]{GLT} we can factor
$T=VRV_0$ where $V_0$ and $V$ are compact operators and $R \in \mathcal{SK}_p$.
Moreover, $V_0$ factors compactly through a reflexive space $W$ by the Figiel-Johnson factorisation result \cite[Proposition 3.1]{F} or the DFJP-factorisation 
 (see e.g. \cite[Theorem 2.g.11]{LT2} or \cite[Theorem 3.2.1]{GMA}).
Let $V_0=BA$ be the corresponding factorisation, so that $T=VRBA$. We claim that\begin{equation}\label{Henriklabel3}
S=:VRB\in\mathcal{SK}_p(W,F^*)\setminus\overline{\mathcal{QN}_p(W,F^*)}.
\end{equation}
Firstly, $S\in\mathcal{SK}_p(W,F^*)$ because $R \in \mathcal{SK}_p$.
Secondly, in view of  inclusion (3.6) and Example \ref{kim}.(ii) 
below for the cotype 2 space $F^*$, we obtain that
\[
\mathcal{QN}_p(W,F^*)\subset \mathcal{KS}_p(W,F^*)\subset\mathcal A(W,F^*).
\]
Thus if $S\in \overline{\mathcal{QN}_p(W,F^*)}$, then $S$ must be approximable. 
But this would mean that $T=SA$ is approximable,
which is a contradiction. Hence $S\notin \overline{\mathcal{QN}_p(W,F^*)}$

Next we use Terzio\u{g}lu's factorisation result \cite{T} to obtain a compact factorisation $B=B_2B_1$ through a suitable closed subspace $M_0\subset c_0$. 
Similarly, $V$ has a compact factorisation $V=V_2V_1$ through a closed subspace 
$M_1\subset c_0$. Consider $U=:V_1RB_2\in\mathcal{SK}_p(M_0,M_1)$, so that 
\[
S= VRB = V_2V_1RB_2B_1=V_2UB_1.
\] 
From (\ref{Henriklabel3}) we get  that 
$U\in\mathcal{SK}_p(M_0,M_1)\setminus  \overline{\mathcal{QN}_p(M_0,M_1)}$,
 which proves Claim 1.

\smallskip

\emph{Proof of Claim 2.} By using the  reflexivity of $W$ and $F^*$ and the duality in 
Fact \ref{skp} together with (\ref{Henriklabel3}) we obtain that 
\begin{equation}\label{Henriklabel5}
S^*\in \mathcal{QN}_p(F,W^*)\setminus \overline{\mathcal{SK}_p(F,W^*)}.
\end{equation}
According to \cite[Lemma 5]{PP} the operator  $S^*$  factors as $S^*=QT_0P$,
where $P$ and $Q$ are compact operators and $T_0$ is also quasi $p$-nuclear.
As above, by Terzio\u{g}lu's result we can further factor $P$ and $Q$ compactly through closed subspaces $M_2\subset c_0$ and $M_3\subset c_0$, respectively. Let $P=P_2P_1$ and $Q=Q_2Q_1$ be the corresponding factorisations, so that $S^*=Q_2Q_1T_0P_2P_1$. 
Consider  $R=:Q_1T_0P_2\in\mathcal{QN}_p(M_2,M_3)$. 
From (\ref{Henriklabel5}) we obtain that
\[
R\in\mathcal{QN}_p(M_2,M_3)\setminus \overline{\mathcal{SK}_p(M_2,M_3)},
\]
which establishes Claim 2.
\end{proof}

We are now in  position to combine the spaces appearing in 
Examples \ref{ideals3} and \ref{newideal}, 
and to obtain a particular direct sum $X \oplus Y$ in the setting of Theorem \ref{ideals},
for which the quotient algebra $\mathfrak{A}_{X \oplus Y}$ contains
(at least) 8 non-trivial closed ideals.

\begin{ex}\label{8ideal}
Suppose that $p > 2$ and $q > 2p/(p-2)$. Let $W \subset \ell^p \oplus \ell^q$ be the 
closed subspace constructed in  Example \ref{newideal},
for which the strict inclusions \eqref{ideal} and \eqref{dideal} hold. 
By arguing as at the beginning of Example \ref{ideals3},
let $X_0 \subset \ell^q$ and $Y_0 \subset \ell^p$ be closed subspaces
such that $\mathcal A(X_0^*,Y_0) \varsubsetneq 
\mathcal K(X_0^*,Y_0)$.
Consider the direct sum 
\[
X \oplus Y =: (W^* \oplus X_0^*) \oplus (W \oplus Y_0).
\]

It is easy to check that the direct sums $W \oplus X_0$ and $Y = W \oplus Y_0$ have type 2, 
so that $X = W^* \oplus X_0^*$ has cotype 2 by 
\cite[Proposition 11.10]{DJT}.  Hence $X \oplus Y$ belongs to  the 
class of spaces from Theorem \ref{ideals}, for which $\mathcal K(Y,X) = \mathcal A(Y,X)$
by Theorem \ref{KMJ}.  By using suitable component operators  as before
one verifies the following strict inclusions by using \eqref{ideal}, \eqref{dideal} 
and the fact that $\mathcal A(X_0^*,Y_0) \varsubsetneq \mathcal K(X_0^*,Y_0)$:

\smallskip

(i) $\mathcal A(X) \subsetneq \mathcal I_0 =: \overline{\mathcal{SK}_p(X)}\subsetneq 
\mathcal K(X)$,

(ii)  $\mathcal A(Y) \subsetneq \mathcal J_0 =: \overline{\mathcal{QN}_p(Y)}\subsetneq 
\mathcal K(Y)$, and

(iii) $\mathcal A(X,Y) \varsubsetneq \mathcal K(X,Y)$.

\smallskip

\noindent We claim that $\mathcal K(X \oplus Y)$ contains (at least) 
the following 8 non-trivial 
closed ideals, where $\mathcal I$ and $\mathcal J$ are the 
ideals constructed in Theorem \ref{ideals}:
\begin{align*}
\mathcal I & = \left( \begin{array}{ccc}
\mathcal K(X) & \mathcal A(Y,X) \\
\mathcal K(X,Y)  &  \mathcal A(Y) \\
\end{array} \right), \ 
\mathcal J =
\left( \begin{array}{ccc}
\mathcal A(X) & \mathcal A(Y,X) \\
\mathcal K(X,Y)  &  \mathcal K(Y) \\
\end{array} \right), \ 
\mathcal I \cap \mathcal J, \\   
\mathcal J_1 & = \left( \begin{array}{ccc}
\mathcal I_0 & \mathcal A(Y,X) \\
\mathcal K(X,Y)  &  \mathcal K(Y)\\
\end{array} \right), \
\mathcal I_1 = \left( \begin{array}{ccc}
\mathcal  K(X) & \mathcal A(Y,X) \\
\mathcal K(X,Y)  &  \mathcal J_0\\
\end{array} \right),  \ 
\mathcal I_1 \cap \mathcal J_1, \\
\mathcal I_2 & = \left( \begin{array}{ccc}
\mathcal I_0 & \mathcal A(Y,X) \\
\mathcal K(X,Y)  &  \mathcal A(Y)\\
\end{array} \right)  \textrm{ and } 
\mathcal J_2 = \left( \begin{array}{ccc}
\mathcal A(X) & \mathcal A(Y,X) \\
\mathcal K(X,Y)  &  \mathcal J_0\\
\end{array} \right).
\end{align*}
It is a straightforward exercise to verify from  \eqref{prod} 
that the new classes above are closed ideals. 
We consider $\mathcal I_2$ as an example, and leave  
the cases $\mathcal I_1$, $\mathcal J_1$ and $\mathcal J_2$ for the reader. 

Suppose first that $U \in \mathcal I_2$ and $V \in \mathcal K(X \oplus Y)$. 
It follows that the component
\[
(UV)_{1,1} = U_{11}V_ {11} + U_{12}V_{21} \in \mathcal I_0
\]
since $U_{11} \in \mathcal I_0$ , while
$(UV)_{2,2} = U_{21}V_{12} + U_{22}V_{22} \in \mathcal A(Y)$ since
$V_{12}, U_{22} \in \mathcal A$. 
Suppose next that $U \in \mathcal K(X \oplus Y)$ and $V \in \mathcal I_2$.
In this case 
\[
(UV)_{1,1} = U_{11}V_ {11} + U_{12}V_{21} \in \mathcal I_0
\]
since $V_{11} \in \mathcal I_0$ and $U_{12} \in \mathcal A$, while 
$(UV)_{2,2} = U_{21}V_{12} + U_{22}V_{22} \in \mathcal A(Y)$ 
since $V_{12}, V_{22} \in \mathcal A$.
The strict inclusions (i) - (iii) for the component ideals imply that all 
the ideals in the above list are non-trivial, as well as pairwise different. In particular, 
$\mathcal A(X \oplus Y)  \varsubsetneq \mathcal I \cap \mathcal J$ by (iii). 

\smallskip

A closer inspection reveals the order structure among the ideals in the above list is the following, 
where a line indicates a strict inclusion (from left to right):

\medskip

\begin{center}
\begin{tikzcd}[tips=false,column sep=0.6em,row sep=0.6em]
& &\mathcal I_2 \ar{dl}\ar{r}  \ar{dr} &\mathcal I\ar{r}&\mathcal I_1\ar{dl}& \\
\mathcal A(X\oplus Y)\ar{r}&\mathcal I\cap \mathcal J\ar{dr}&&\mathcal I_1\cap \mathcal J_1\ar{dr} \ar{dl}&&\mathcal K(X\oplus Y).\ar{dl}\ar{ul}\\
&&\mathcal J_2\ar{r}& \mathcal J\ar{r}&\mathcal J_1&  \\
\end{tikzcd}
\end{center}

\medskip

\noindent Thus  $\mathcal K(X \oplus Y)$ contains several chains of closed ideals,
as well as incomparable pairs. $\Box$
\end{ex}

Let $1 \le p < q < \infty$. Recall that the closed ideals 
\[
 \left( \begin{array}{ccc}
\mathcal L(\ell^p) & \mathcal K(\ell^q,\ell^p) \\
\mathcal L(\ell^p,\ell^q)  &  \mathcal K(\ell^q) \\
\end{array} \right)   \textrm{ and } 
 \left( \begin{array}{ccc}
\mathcal K(\ell^p) & \mathcal K(\ell^q,\ell^p) \\
\mathcal L(\ell^p,\ell^q)  &  \mathcal L(\ell^q) \\
\end{array} \right)  
\]
are maximal in $\mathcal L(\ell^p \oplus \ell^q)$, see e.g.
\cite[5.3.2]{Pie}. Suppose that  $X \subset \ell^p$ and $Y \subset \ell^q$ are closed subspaces,
where $p < 2 < q$.
It is a natural question whether the corresponding closed ideals 
\[
\mathcal I =  \left( \begin{array}{ccc}
\mathcal K(X) & \mathcal A(Y,X) \\
\mathcal K(X,Y)  &  \mathcal A(Y) \\
\end{array} \right)   \textrm{ and } 
\mathcal J =  \left( \begin{array}{ccc}
\mathcal A(X) & \mathcal A(Y,X) \\
\mathcal K(X,Y)  &  \mathcal K(Y) \\
\end{array} \right) 
\]
given by Theorem \ref{ideals} are maximal in $\mathcal K(X \oplus Y)$. 
The following variant of the preceding examples 
demonstrate that neither $\mathcal I$ nor $\mathcal J$ are in general 
maximal ideals in the setting of Theorem \ref{ideals}. 

\begin{ex}\label{nonmax}
Suppose that $1 \le p < 2$ and $q > 4$. Let $W \subset \ell^q$ be the 
closed subspace from Example \ref{newideal} 
such that \eqref{ideal} and \eqref{dideal} hold.
In view of Facts \ref{ap1}.(ii) let  $X \subset \ell^p$ be
a closed subspace  such that 
$\mathcal A(X) \subsetneq \mathcal K(X)$. The direct sum 
$X \oplus W$ 
satisfies the assumptions of Theorem \ref{ideals}, so that 
$\mathcal K(W,X) = \mathcal A(W,X)$. 
Let $\mathcal J_0 = \overline{\mathcal{QN}_q(W)}$. 
By arguing as in Example \ref{8ideal}  we get that 
\[
\mathcal I =  \left( \begin{array}{ccc}
\mathcal K(X) & \mathcal A(W,X) \\
\mathcal K(X,W)  &  \mathcal A(W) \\
\end{array} \right) \varsubsetneq 
\left( \begin{array}{ccc}
\mathcal K(X) & \mathcal A(W,X) \\
\mathcal K(X,W)  & \mathcal J_0 \\
\end{array} \right)
\]
are both non-trivial closed ideals in $\mathcal K(X \oplus W)$. 
Thus $\mathcal I$ is not a maximal 
ideal in $\mathcal K(X \oplus W)$, where $X \oplus W$
is a closed subspace of $\ell^p \oplus  \ell^q$.

Actually, it is not difficult to check that the Banach algebra
$\mathcal K(X \oplus W)$ contains (at least) the following 
four non-trivial  closed ideals:
$\mathcal I$ and $\mathcal J$ from Theorem \ref{ideals}, together with
\[
\left( \begin{array}{ccc}
\mathcal  A(X) & \mathcal A(W,X) \\
\mathcal K(X,W)  &  \mathcal J_0\\
\end{array} \right)  \textrm{ and } 
\left( \begin{array}{ccc}
\mathcal  K(X) & \mathcal A(W,X) \\
\mathcal K(X,W)  & \mathcal J_0\\
\end{array} \right).
\]

Next, let $r > 2$ and pick a closed subspace $Y \subset \ell^r$ such that 
$\mathcal A(Y) \subsetneq \mathcal K(Y)$. Then $W^* \oplus Y$
satisfies the assumptions of Theorem \ref{ideals}. As in Example 
\ref{8ideal} we get with $\mathcal I_0 = \overline{\mathcal{SK}_p(W^*)}$
from \eqref{dideal} that 
\[
\mathcal J =  \left( \begin{array}{ccc}
\mathcal A(W^*) & \mathcal A(Y,W^*) \\
\mathcal K(W^*,Y)  &  \mathcal K(Y) \\
\end{array} \right) \varsubsetneq 
\left( \begin{array}{ccc}
\mathcal I_0 & \mathcal A(Y,W^*) \\
\mathcal K(W^*,Y)  &  \mathcal K(Y)\\
\end{array} \right)
\]
are again non-trivial closed ideals of $\mathcal K(W^* \oplus Y)$. 
In particular, $\mathcal J$ is not maximal  in 
$\mathcal K(W^* \oplus Y)$. In this case $W^*$ is a quotient space 
of $\ell^{q'}$.  $\Box$
\end{ex}

\smallskip

Recently Kim \cite{Kim19} studied the $\mathcal K_{p,q}^{inj}$-A.P.  and the 
uniform $\mathcal K_{p,q}^{inj}$-A.P.
for a scale $\mathcal K_{p,q}$ of Banach operator ideals that include the classical 
$p$-compact operators as $\mathcal K_p =: \mathcal K_{p,p'}$,
where $p'$ is the dual exponent of $p$. In particular,
he obtained the surprising result \cite[Proposition 4.3]{Kim19} that 
$X$ has the $\mathcal K_p^{inj}$-A.P. if and only if $X$ has the 
uniform $\mathcal K_p^{inj}$-A.P. However, \cite{Kim19} 
does not include explicit examples concerning
this approximation property. Recall that $\mathcal{KS}_p = \mathcal K_p^{inj}$
by Proposition \ref{Kfactor1}, so that the following examples about
the  (uniform) $\mathcal K_p^{inj}$-A.P. are contained in
Proposition \ref{Bfactor}, Theorem \ref{Figiel}  and Theorem \ref{reinov}.

\begin{ex}\label{kim}
Let $1 \le p < \infty$ and $p \neq 2$.

(i) If $X \subset \ell^p$ is a closed subspace, then $X$ has the (uniform) 
$\mathcal{KS}_p$-A.P. if and only if $X$ has the A.P.

(ii) Let $2 < q < \infty$. If $X$ has cotype $2$, then $X$ has the 
(uniform) $\mathcal{KS}_q$-A.P. In particular, for $1 \le p < 2$ 
there is a closed subspace $X\subset \ell^p$
that has the (uniform) $\mathcal{KS}_q$-A.P., but fails the A.P.

(iii) If $p,q, \in [1,2)$ or $p, q \in (2,\infty)$, then there is a closed 
subspace $X \subset \ell^p$ such that $X$ fails the (uniform) $\mathcal{KS}_q$-A.P.
\end{ex}

\begin{proof}
(i) If  $X \subset \ell^p$ is a closed subspace, then 
$\mathcal K(Z,X) = \mathcal {KS}_p(Z,X)$ for any space $Z$ 
by Proposition \ref{Bfactor}.(i).
Hence, if $X$ has the (uniform) $\mathcal{KS}_p$-A.P., then 
\[
 \mathcal K(Z,X) =  \mathcal {KS}_p(Z,X) = \mathcal A(Z,X)
\]
for all  $Z$, so that $X$ has the A.P. Conversely, if $X$ has the A.P., then
\[
\mathcal{KS}_p(Z,X) \subset \mathcal K(Z,X) =  \mathcal A(Z,X)
\]
for any $Z$, so that $X$ has the uniform $\mathcal{KS}_p$-A.P.
(The converse implication is also noted in \cite[Corollary 4.5]{Kim19}.)

\smallskip

(ii) Let  $Z$ be an arbitrary Banach space, and suppose that $T \in \mathcal{KS}_q(Z,X)$.
Hence there is a closed subspace $M \subset \ell^q$ and a compact factorisation $T = BA$, 
where $B \in \mathcal K(M,X)$.  Theorem \ref{KMJ} implies that  $B \in \mathcal A(M,X)$,
since $X$ has cotype $2$ and $M$ has type $2$. 
This means that $\mathcal {KS}_q(Z,X) \subset \mathcal A(Z,X)$, so  that 
$X$ has the (uniform) $\mathcal{KS}_q$-A.P. Finally, recall from 
 Facts \ref{ap1}.(i) that  for $1 \le p < 2$ there are closed subspaces 
$X \subset \ell^p$ that fail the A.P. 

\smallskip

(iii)  We may assume that $p \neq q$, since the case $p = q$ follows from part (i) together with 
Facts \ref{ap1}.(i).
If $1 \le q < p < 2$ , then recall from Theorem \ref{Figiel}.(i) that 
$\mathcal{KS}_q(X) = \mathcal K(X)$ for any closed subspace $X \subset \ell^p$. 
Hence $X$ fails the uniform $\mathcal{KS}_q$-A.P. whenever 
$X \subset \ell^p$ is a closed subspace such that $\mathcal A(X) \varsubsetneq \mathcal K(X)$,
where such subspaces again exists by Facts \ref{ap1}.(ii).

Let $1 \le p < q < 2$. According to the proof of Theorem \ref{Figiel}.(iii) 
there is a closed subspace $X \subset \ell^p$ together with  an operator 
\[
 U = AB \in \mathcal{KS}_q(X) \setminus  \mathcal A(X).
\]
This means that $X$ does not have the uniform $\mathcal{KS}_q$-A.P.

Let $p, q \in (2,\infty)$. By Theorem \ref{reinov} 
there is a closed subspace  $X\subset \ell^p$ 
such that $X$ fails the uniform $\mathcal{QN}_q$-A.P.
Consequently $X$ also fails the (uniform) $\mathcal{KS}_q$-A.P.
in view of  \eqref{qnfactor}.
\end{proof}
 
\section{Concluding examples and problems}\label{quest}

\smallskip

In this section we find a natural Banach operator ideal, which is equipped with 
the operator norm, that induces a non-trivial 
closed ideal of $\mathfrak{A}_X$ for a class of Banach spaces $X$. Moreover,
we exhibit a class of spaces $X$ such that $\mathfrak{A}_X$ has uncountably many 
closed ideals with an explicit order structure. Both examples are related to the failure of 
duality for the A.P.

Let $X$ and $Y$ be Banach spaces. The operator $T \in \mathcal L(X,Y)$
is \textit{compactly approximable}, denoted $T \in \mathcal{CA}(X,Y)$,
if for any compact subset $K \subset X$ and $\varepsilon > 0$ there is a bounded finite rank 
operator $V \in \mathcal F(X,Y)$ so that 
\[
\sup_{x \in K} \Vert Tx - Vx\Vert < \varepsilon.
\]
In other words, $\mathcal{CA}(X,Y) = \overline{\mathcal F(X,Y)}^{\tau}$, 
where the closure is taken in $\mathcal L(X,Y)$ with respect to the 
topology $\tau$  of uniform convergence on compact sets in $\mathcal L(X,Y)$.
By definition  $X$ has the A.P. if and only if the identity operator $I_ X \in \mathcal{CA}(X)$. 

The class $\mathcal{CA}$ was used by Pisier \cite{P80} 
(see also \cite[0.2]{P2} and \cite[31.5]{DF}). 
It defines a Banach operator ideal,  which has not been much studied, though 
the related class $\overline{\mathcal K(X,Y)}^{\tau}$ 
appears in \cite{GS} and \cite{Go}. 
We first list the relevant basic properties of $\mathcal{CA}$.

\begin{prop}\label{ca}
(i) $\mathcal{CA}$ is a closed Banach operator ideal.

(ii) If $X$ or $Y$ has the A.P., then
 $\mathcal{CA}(X,Y) = \mathcal L(X,Y)$.
 
 (iii) $\mathcal{CA}(X,Y) = \overline{\mathcal A(X,Y)}^{\tau}$.

(iv) If $V \in \mathcal K(Z,X)$ and $U \in \mathcal{CA}(X,Y)$, then $UV \in \mathcal A(Z,Y)$.
\end{prop}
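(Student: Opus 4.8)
The plan is to verify the four assertions directly from the definition $\mathcal{CA}(X,Y) = \overline{\mathcal F(X,Y)}^{\tau}$, where $\tau$ denotes the topology of uniform convergence on the compact subsets of $X$, bearing in mind throughout that the norm on $\mathcal{CA}$ is the operator norm, so that $\vert \cdot \vert_{\mathcal{CA}} = \Vert \cdot \Vert$ and the norm inequality in (BOI1) holds trivially.

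For (i), I would first check that each component $\mathcal{CA}(X,Y)$ is norm-closed, hence a Banach space: if $T_n \in \mathcal{CA}(X,Y)$ with $\Vert T_n - T\Vert \to 0$, then for a compact set $K \subset X$ and $\varepsilon > 0$ one picks $n$ with $\Vert T_n - T\Vert \sup_{x \in K}\Vert x\Vert < \varepsilon/2$ and then $V \in \mathcal F(X,Y)$ with $\sup_{x \in K}\Vert T_n x - Vx\Vert < \varepsilon/2$, so that $T \in \mathcal{CA}(X,Y)$. Linearity of $\mathcal{CA}(X,Y)$ is immediate since $\tau$ is a vector topology, and (BOI2) holds because $\mathcal F(X,Y) \subset \mathcal{CA}(X,Y)$ and the norm is the operator norm. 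For the ideal property (BOI3), given $U \in \mathcal{CA}(X,Y)$, bounded operators $A \in \mathcal L(Z,X)$, $B \in \mathcal L(Y,W)$, and a compact set $K \subset Z$, the image $A(K)$ is compact in $X$; choosing $V \in \mathcal F(X,Y)$ with $\sup_{x \in A(K)}\Vert Ux - Vx\Vert < \varepsilon/(1 + \Vert B\Vert)$ gives $BVA \in \mathcal F(Z,W)$ with $\sup_{z \in K}\Vert BUAz - BVAz\Vert < \varepsilon$, and the accompanying norm estimate $\Vert BUA\Vert \le \Vert B\Vert\, \Vert A\Vert\, \Vert U\Vert$ is just submultiplicativity.

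Parts (ii) and (iii) are short. For (ii), if $X$ has the A.P., then for $T \in \mathcal L(X,Y)$, compact $K \subset X$ and $\varepsilon > 0$, pick $R \in \mathcal F(X)$ with $\sup_{x \in K}\Vert x - Rx\Vert < \varepsilon/(1 + \Vert T\Vert)$, so that $TR \in \mathcal F(X,Y)$ approximates $T$ on $K$; if instead $Y$ has the A.P., apply it to the compact set $T(K) \subset Y$ to get $S \in \mathcal F(Y)$ with $\sup_{y \in T(K)}\Vert y - Sy\Vert < \varepsilon$, and then $ST \in \mathcal F(X,Y)$ works. For (iii), the inclusion $\mathcal{CA}(X,Y) = \overline{\mathcal F(X,Y)}^{\tau} \subset \overline{\mathcal A(X,Y)}^{\tau}$ is clear from $\mathcal F \subset \mathcal A$; conversely, since the operator-norm topology is finer than $\tau$ one has $\mathcal A(X,Y) = \overline{\mathcal F(X,Y)}^{\Vert \cdot \Vert} \subset \overline{\mathcal F(X,Y)}^{\tau} = \mathcal{CA}(X,Y)$, and taking $\tau$-closures together with the fact that $\mathcal{CA}(X,Y)$ is $\tau$-closed by definition yields $\overline{\mathcal A(X,Y)}^{\tau} \subset \mathcal{CA}(X,Y)$.

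The assertion I expect to carry the actual content is (iv), which is precisely what will later make $\mathcal{CA}(X) \cap \mathcal K(X)$ a closed ideal of $\mathfrak{A}_X$. The point is that the compactness of $V$ upgrades the $\tau$-approximability of $U$ to genuine norm-approximability of $UV$: the set $\overline{V(B_Z)}$ is compact in $X$, so for $\varepsilon > 0$ there is $R \in \mathcal F(X,Y)$ with $\sup_{x \in \overline{V(B_Z)}}\Vert Ux - Rx\Vert < \varepsilon$; then $RV \in \mathcal F(Z,Y)$ and $\Vert UV - RV\Vert = \sup_{z \in B_Z}\Vert UVz - RVz\Vert \le \sup_{x \in \overline{V(B_Z)}}\Vert Ux - Rx\Vert < \varepsilon$, whence $UV \in \overline{\mathcal F(Z,Y)}^{\Vert \cdot \Vert} = \mathcal A(Z,Y)$. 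There is no real obstacle here; the only care needed is the routine passage from ``uniform convergence on compact sets'' to ``uniform operator norm convergence'' that becomes available exactly when a compact operator is precomposed.
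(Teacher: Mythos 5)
Your proposal is correct and follows essentially the same route as the paper: the norm-closedness of $\mathcal{CA}(X,Y)$ via a two-step approximation, part (ii) via the ideal property applied to $I_X$ or $I_Y$ (which you simply unwind into an explicit $\varepsilon$-argument), and part (iv) by approximating $U$ uniformly on the compact set $\overline{V(B_Z)}$ to get norm-approximation of $UV$. You also supply the verification of (BOI3) and the two-inclusion argument for (iii), which the paper leaves to the reader or describes as a variant of (i); these are fine.
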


\begin{proof}
(i) It is straightforward to check the operator ideal properties of $\mathcal{CA}$,
and we leave this to the reader.  Suppose that $U \in \overline{\mathcal{CA}(X,Y)}$. 
Let  $K \subset X$ be a 
compact subset, $\varepsilon  > 0$ and put $M = \sup_{x \in K} \Vert x\Vert$.
First pick $T \in \mathcal{CA}(X,Y)$ such that $\Vert U - T \Vert < \varepsilon /(M+1)$, and then
$V \in \mathcal F(X,Y)$ such that
 $\Vert Tx - Vx \Vert < \varepsilon$ for all $x \in K$. Hence, for any $x \in K$ we have 
\[
\Vert Ux - Vx\Vert \le M \Vert U- T\Vert  + \Vert Tx - Vx\Vert < 2 \varepsilon.
\]
It follows that   $U \in \mathcal{CA}(X,Y)$. 

(ii) By assumption $I_X$ or $I_Y$ is compactly approximable, 
so that $\mathcal{CA}(X,Y) = \mathcal L(X,Y)$ by the
operator ideal property of  $\mathcal{CA}$.

(iii) This is an simple variant of the argument in part (i). 

(iv) Let $\varepsilon > 0$ be arbitrary. 
Since $\overline{VB_Z}$ is a compact subset of $X$,
there is $T \in \mathcal F(X,Y)$ so that
\[
\Vert UV - TV\Vert  = \sup_{z \in \overline{VB_Z}} \Vert Uz- Tz\Vert < \varepsilon.
\]
We obtain that $UV \in \mathcal A(Z,Y)$
since $TV \in \mathcal F(Z,Y)$.
\end{proof}

We next use the failure of duality for the approximation property to show
that the closed Banach operator ideal $\mathcal{CA} \cap  \mathcal K$
gives a non-trivial closed ideal inside the compact operators for certain Banach spaces. 
Recall that there are Banach spaces $X$ such that 
$X$ has the A.P., but $X^*$ fails to have the A.P. In fact, 
for every separable Banach space $Y$ that fails the A.P. 
there is by \cite[Theorems 1.d.3 and  1.e.7.(b)]{LT1} a Banach space $Z$ such that 
$Z^{**}$ has a Schauder basis and $Z^{**}/Z$ is isomorphic to $Y$, so that 
$Z^{***} \approx Z^* \oplus Y^*$ fails the A.P.  (since $Y^*$ also fails the A.P.)
There are also concrete spaces of this kind:  the space  $X = \mathcal N_1(\ell^2)$
of the $1$-nuclear operators on $\ell^2$ has a Schauder basis, 
but $X^* = \mathcal L(\ell^2)$ fails the A.P. by a celebrated result of Szankowski \cite{Sz81}.
Recall further that spaces $X$ having the above property
cannot be reflexive, cf. \cite[Theorem 1.e.7.(a)]{LT1}.

\begin{ex}\label{cak}
Let $X$ be a Banach space such that $X$ has the A.P., 
but $X^*$ fails the A.P.  By \cite[Theorem 1.e.5]{LT1}
there is a Banach space $Y$ such that $\mathcal A(X,Y) \varsubsetneq \mathcal K(X,Y)$. 
Suppose that  $W = X \oplus Y \oplus Z$, where either

\smallskip

\begin{itemize}
\item [(i)]
the Banach space $Z$ has the B.C.A.P., but fails to have the 
A.P. (such spaces were first constructed by Willis \cite{W}), or

\item [(ii)]
$Z \subset \ell^p$ is the closed subspace constructed in  Theorem \ref{nil}
for $1 \le p < \infty$ and $p \neq 2$.
\end{itemize}

\smallskip

\noindent \textit{Claim.} 
For $\mathcal I =:  \mathcal{CA} \cap  \mathcal K$ we have 
\[
\mathcal A(W)  \varsubsetneq    \mathcal I(W)   \varsubsetneq  \mathcal K(W),
\]
where the induced quotient ideal $q(\mathcal I(W))$ is nilpotent in $\mathfrak{A}_W$. 
Moreover, there is $V \in \mathcal K(W)$ such that
$V^n \notin \mathcal I(W)$ for any $n \in \mathbb N$, so that the radical quotient algebra
$\mathcal K(W)/\mathcal I(W)$ is non-nilpotent and infinite-dimensional. 
\end{ex}

\begin{proof}
We will again systematically use the fact that Banach operator ideals are uniquely determined 
on finite direct sums  by their respective ideal components. 
Since $X$ has the A.P., it follows from Proposition \ref{ca}.(ii) that 
$\mathcal{CA}(X,Y) = \mathcal L(X,Y)$. Hence 
\[
\mathcal A(X,Y)  \varsubsetneq  \mathcal{CA}(X,Y) \cap \mathcal{K}(X,Y) = \mathcal{K}(X,Y),
\]
as  $\mathcal A(X,Y) \varsubsetneq \mathcal K(X,Y)$ by our choice of $X$ and $Y$. 
This implies that  $\mathcal A(W)  \varsubsetneq \mathcal I(W)$.

We need the fact that there is a compact operator 
$U \in \mathcal K(Z)$ such that $U^n \notin \mathcal A(Z)$ for all $n \in \mathbb N$. 
This follows from \cite[Proposition 3.1]{TW} in the case (i) and from
Theorem \ref{nil} in the case (ii).
Hence $U \notin \mathcal{CA}(Z)$, since otherwise
$U^2 \in \mathcal A(Z)$ by Proposition  \ref{ca}.(iv). It follows that 
\[
U \in \mathcal K(Z) \setminus (\mathcal{CA}(Z) \cap \mathcal{K}(Z)),
\] 
so that also $\mathcal I(W)  \varsubsetneq \mathcal K(W)$.

Define $V \in \mathcal K(W)$ by $V(x,y,z) = (0,0,Uz)$ for $(x,y,z) \in W$. 
The above properties yield that $V^n \notin \mathcal{CA}(W)$ for $n \in \mathbb N$,
so that the quotient algebra $\mathcal K(W)/\mathcal I(W)$ is non-nilpotent.
By  Proposition \ref{real} the quotient $\mathcal K(W)/\mathcal I(W)$ is a radical algebra 
both in the real and complex cases. 
It follows once more from  \cite[Proposition 1.5.6.(iv)]{D00} that 
the algebra $\mathcal K(W)/\mathcal I(W)$ is infinite-dimensional.

Finally,   Proposition  \ref{ca}.(iv) implies  that  $ST \in \mathcal A(W)$ for any
$S, T \in \mathcal{CA}(W) \cap \mathcal{K}(W)$, so that
$q(\mathcal I(W))$ is a nilpotent closed ideal of $\mathfrak{A}_W$.
\end{proof}

The preceding example also demonstrates
that in Proposition  \ref{ca}.(iv) the order of composition matters.
Namely, if we pick  
$U \in  \mathcal K(X,Y) \setminus  \mathcal A(X,Y)$ according to Example
\ref{cak}, then $I_X \in \mathcal{CA}(X)$ 
as $X$ has the A.P., but 
$U \circ I_X \notin  \mathcal A(X,Y)$.

\smallskip

In Example \ref{cak} the space $W = X \oplus Y \oplus Z$ is not reflexive, 
since $X$ cannot be reflexive  as we noted above.
We observe next that actually $\mathcal{CA} \cap \mathcal K$ coincides with 
$\mathcal A$ within the class of reflexive Banach spaces.
This is based on a representation by Godefroy and Saphar \cite{GS}  
of the bidual $\mathcal K(X,Y)^{**}$. 

\begin{prop}\label{caideal}
If $X$ is a reflexive Banach space, then 
\[
\mathcal{CA}(X) \cap \mathcal K(X) = \mathcal A(X).
\]
\end{prop}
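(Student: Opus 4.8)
The inclusion $\mathcal A(X)\subseteq\mathcal{CA}(X)\cap\mathcal K(X)$ is immediate, since $\mathcal A(X)\subseteq\mathcal K(X)$ and $\mathcal A(X)=\overline{\mathcal F(X)}^{\Vert\cdot\Vert}\subseteq\overline{\mathcal F(X)}^{\tau}=\mathcal{CA}(X)$. For the reverse inclusion the plan is a Hahn--Banach duality argument. Fix $T\in\mathcal{CA}(X)\cap\mathcal K(X)$. Since $\mathcal A(X)$ is a norm-closed subspace of the Banach space $\mathcal K(X)$, it suffices to show that $\psi(T)=0$ for every $\psi\in\mathcal K(X)^*$ that annihilates $\mathcal F(X)$ (equivalently, that annihilates $\mathcal A(X)$).

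The first and main step is to represent such functionals $\psi$ concretely. Here I would invoke the Godefroy--Saphar representation of the bidual $\mathcal K(X)^{**}$ from \cite{GS}: since $X$ is reflexive, $X$ and $X^*$ have the Radon--Nikod\'ym property, and this forces every $\psi\in\mathcal K(X)^*$ annihilating $\mathcal F(X)$ to be a \emph{nuclear} form on $\mathcal K(X)$, i.e. to admit a representation
\[
\psi(S)=\sum_{n=1}^{\infty}\langle Sx_n,x_n^*\rangle,\qquad S\in\mathcal K(X),
\]
for suitable $(x_n)\subseteq X$, $(x_n^*)\subseteq X^*$ with $\sum_{n=1}^{\infty}\Vert x_n\Vert\,\Vert x_n^*\Vert<\infty$. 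I expect this to be the crux of the proof: it is exactly here that reflexivity (more precisely, the Radon--Nikod\'ym property, in the absence of the approximation property) is needed, since in general the annihilator of $\mathcal F(X)$ inside $\mathcal K(X)^*$ may contain functionals of genuinely integral, non-nuclear type, and the statement then breaks down.

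Granting this nuclear representation, the remaining steps are elementary and I would carry them out as follows. First renormalise the representation as $\psi(S)=\sum_n\lambda_n\langle Sa_n,b_n^*\rangle$ with $(\lambda_n)\in\ell^1$, $\Vert a_n\Vert\le1$, $\Vert b_n^*\Vert\le1$, and, crucially, $a_n\to0$ in norm. This is possible because, writing $\mu_n=\Vert x_n\Vert\,\Vert x_n^*\Vert$ and $r_n=\sum_{k\ge n}\mu_k$, the choice $\alpha_n=\sqrt{r_n}$ gives $\alpha_n\to0$ and $\sum_n\mu_n/\alpha_n\le 2\sqrt{r_1}<\infty$ (telescoping), so one may put $\lambda_n=\mu_n/\alpha_n$ and absorb the factor $\alpha_n$ into the unit vectors on the $X$-side. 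Then $K:=\{a_n:n\in\mathbb N\}\cup\{0\}$ is a norm-compact subset of $X$. Since $T\in\mathcal{CA}(X)$, for every $\varepsilon>0$ there is $V\in\mathcal F(X)$ with $\sup_{x\in K}\Vert Tx-Vx\Vert<\varepsilon$; as $\psi$ annihilates $\mathcal F(X)$ we obtain
\[
|\psi(T)|=|\psi(T-V)|\le\sum_{n=1}^{\infty}|\lambda_n|\,\Vert(T-V)a_n\Vert\,\Vert b_n^*\Vert\le\varepsilon\sum_{n=1}^{\infty}|\lambda_n|.
\]
Letting $\varepsilon\to0$ yields $\psi(T)=0$, and hence $T\in\mathcal A(X)$ by Hahn--Banach. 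Thus the only non-formal ingredient is the nuclearity of the functionals annihilating $\mathcal F(X)$, which is precisely what the Godefroy--Saphar description of $\mathcal K(X)^{**}$ provides for reflexive $X$.
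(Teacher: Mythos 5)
Your proof is correct, and it isolates exactly the right non-formal ingredient: for reflexive $X$ the Godefroy--Saphar result \cite[Proposition 1.1]{GS} exhibits $\mathcal K(X)^*$ as a quotient of $X \hat{\otimes}_{\pi} X^*$, so every $\psi \in \mathcal K(X)^*$ (not only those annihilating $\mathcal F(X)$) admits a nuclear representation $\psi(S)=\sum_{n}\langle Sx_n,x_n^*\rangle$ with $\sum_n \Vert x_n\Vert\,\Vert x_n^*\Vert<\infty$; this is precisely where reflexivity (the RNP of $X^*$) enters, as you note. From that common starting point the two arguments diverge. The paper identifies $\mathcal K(X)^{**}$ with $\overline{\mathcal K(X)}^{w^*}=\overline{\mathcal K(X)}^{\tau}$ inside $\mathcal L(X)$, deduces $\mathcal A(X)^{**}=\overline{\mathcal A(X)}^{\tau}=\mathcal{CA}(X)$ from the coincidence of $w^*$- and $\tau$-closed convex sets, and then applies Valdivia's lemma $\overline{M}^{w^{*}}\cap Z=M$ to the pair $\mathcal A(X)\subset\mathcal K(X)$. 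You replace the bidual identification and Valdivia's lemma by a direct Hahn--Banach separation: the renormalisation $\lambda_n=\mu_n/\sqrt{r_n}$ forces $a_n\to0$, so $K=\{a_n:n\in\mathbb N\}\cup\{0\}$ is compact, and the $\tau$-approximant $V\in\mathcal F(X)$ gives $|\psi(T)|=|\psi(T-V)|\le\varepsilon\sum_n|\lambda_n|$. Your route is more elementary and self-contained --- in effect it reproves, in the one instance needed, the convex-set statement that the paper imports --- while the paper's version yields the structurally cleaner identity $\mathcal A(X)^{**}=\mathcal{CA}(X)$, which has independent interest. Two cosmetic points: your normalisation gives $\Vert a_n\Vert=\sqrt{r_n}$, which need not be $\le 1$ for small $n$ (harmless, since only $a_n\to0$ is used), and indices with $r_n=0$ should simply be discarded from the representation.
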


\begin{proof}
Let $X$ be any reflexive Banach space. We require the fact that, 
up to isometric isomorphism, we may identify
\begin{equation}\label{cafact}
\mathcal A(X)^{**} = \overline{\mathcal A(X)}^{\tau} = \mathcal{CA}(X),
\end{equation}
where the closure is taken in $\mathcal L(X)$ with respect to the topology $\tau$ 
of uniform convergence on compact subsets $K \subset X$.
This fact is essentially contained in the proofs of 
Proposition 1.1, Corollaries 1.2 and 1.3 in \cite{GS}.

Namely, by \cite[Proposition 1.1]{GS} we have
$\mathcal K(X)^{**} =  \overline{\mathcal K(X)}^{w^*}$ up to isometric isomorphism,
where the $w^*$-topology denotes the one induced in $\mathcal L(X)$ from the
duality $(X \hat{\otimes}_{\pi} X^*)^* = \mathcal L(X)$. In fact,
let $(T_\alpha) \subset \mathcal K(X)^{**}$ be an arbitrary net and $T \in \mathcal K(X)^{**}$. 
According to \cite[Proposition 1.1]{GS} there is for reflexive spaces $X$ a quotient map 
$Q: X \hat{\otimes}_{\pi} X^* \to \mathcal K(X)^*$ such that $Q^*$ defines an isometric
embedding $\mathcal K(X)^{**} \to \mathcal L(X)$.
In particular, 
\[
\langle z, Q^*T_\alpha - Q^*T \rangle = \langle Qz,T_\alpha - T \rangle \textrm{ for all } 
z \in X \hat{\otimes}_{\pi} X^*.
\]
Moreover, by \cite[Corollary 1.3]{GS} 
we may identify
$\mathcal K(X)^{**} =  \overline{\mathcal K(X)}^{\tau}$ in $\mathcal L(X)$. 
By standard duality we also have $\mathcal A(X)^{**} = \mathcal A(X)^{\perp \perp}
\subset \mathcal K(X)^{**}$,
where $\mathcal A(X)^{\perp \perp}$ denotes the biannihilator of $\mathcal A(X)$
in $\mathcal K(X)^{**}$. 
Since the $w^*$-topology of $\mathcal L(X)$ and the $\tau$-topology 
have the same closed convex sets in $\mathcal L(X)$, see
the proof of \cite[Corollary 1.2]{GS}, we deduce that \eqref{cafact} holds.

Recall next that if $Z$ is a Banach space and $M \subset Z$ is a closed subspace, 
then a lemma of Valdivia \cite[pp. 107-108]{V87} (see also 
\cite[Lemma A.5.1]{GMA}) implies that
$\overline{M}^{w^{*}} \cap Z = M$, where $w^*$ denotes the $w^*$-topology of $Z^{**}$.
By applying these facts to $\mathcal A(X) \subset \mathcal K(X)$ we obtain that 
\[
\mathcal{CA}(X) \cap \mathcal K(X) = \mathcal A(X)^{**} \cap \mathcal K(X) = \mathcal A(X)
\]
which completes the argument.
\end{proof} 

\begin{remarks}
(i) Proposition \ref{caideal} does not extend to the class of Banach spaces 
that has  the Radon-Nikod\'ym property (RNP), since there are  direct sums 
$W = X \oplus Y \oplus Z$ with the RNP among those of Example \ref{cak}. 
We refer to e.g. \cite[Chapter III]{DU} for the definition of the RNP and 
the facts that separable dual spaces  as well as reflexive spaces have the RNP.

Namely,  $X$ can be chosen a separable dual space  in Example \ref{cak} 
and $Y$ a reflexive space by applying the DFJP-factorisation theorem 
(see e.g. \cite[Theorem 2.g.11]{LT2} or \cite[Theorem 3.2.1]{GMA}). 
In condition (ii) of Example  \ref{cak} the subspace $Z \subset \ell^p$ is reflexive
for $p > 1$, while there are also separable 
reflexive spaces $Z$ which satisfy condition (i) by \cite[Propositions 3 and 4]{W}. 

\smallskip

(ii) In the direct sum $W = X \oplus Y \oplus Z$ of Example \ref{cak} 
it is also possible to choose $Z$ reflexive so
that $\mathfrak{A}_Z \neq \{0\}$. Then Proposition \ref{caideal} implies 
that $\mathcal{CA}(Z)\cap \mathcal K(Z)= \mathcal A(Z) \varsubsetneq \mathcal K(Z)$,
and hence
$\mathcal I(W) = \mathcal{CA}(W)\cap \mathcal K(W) \varsubsetneq \mathcal K(W)$.
However, such a choice does not by itself guarantee that
$\mathcal K(W)/\mathcal I(W)$ is a non-nilpotent quotient algebra,  
which is also obtained in  Example \ref{cak}.
\end{remarks}

\medskip

We finally exhibit a quite dramatic example, 
where the compact-by-approximable algebra  contains an uncountable family of closed ideals having the reverse lattice structure of the partially ordered 
power set $(\mathcal P(\mathbb N),\subset)$ of the natural numbers $\mathbb N$.
This example has a very special form  and in a sense it is the most elementary one
contained here.
 
\begin{ex}\label{infty}
Fix  $1 < p < \infty$ and let $X$ be any Banach space 
such that $X$ has the A.P., but $X^*$ fails to have the 
A.P. By \cite[Theorem 1.e.5]{LT1} there is a Banach space $Y_0$ such that 
\[
\mathcal A(X,Y_0) \varsubsetneq \mathcal K(X,Y_0),
\] 
but where $\mathcal K(Y_0,X) = \mathcal A(Y_0,X)$ and $\mathcal K(X) = \mathcal A(X)$, 
since $X$ has the A.P.  By replacing $Y_0$ with the
direct $\ell^p$-sum $Y =: \big(\oplus_{\mathbb N} Y_0)_p$, 
we may assume that  the quotient space $\mathcal K(X,Y)/\mathcal A(X,Y)$ 
is infinite-dimensional. In fact, let $S \in \mathcal K(X,Y_0)$ satisfy
$\Vert S\Vert  = 1$ and $dist(S,\mathcal A(X,Y_0)) > 1/2$. Define 
$S_j: X \to Y$ by 
\[
S_jx = (0,\ldots,0,Sx,0,\ldots), \quad x \in X \textrm{ and } j \in \mathbb N,
\] 
where the vector $Sx$ belongs to the $j$:th component of $Y$. 
It is straightforward to check that 
$dist(S_i-S_j,\mathcal A(X,Y)) \ge 1/2$ for $i \neq j$, so that 
$\mathcal K(X,Y)/\mathcal A(X,Y)$  is infinite-dimensional. Note that we still have
$\mathcal K(Y,X) = \mathcal A(Y,X)$. 

We consider the direct sum 
\[
Z = \big(\oplus_{k=0}^\infty  X_k\big)_p  \ ,
\]
where we put $X_0 = Y$ and 
$X_k = X$ for $k \in \mathbb N$ for unicity of notation.
Let $P_k \in \mathcal L(Z)$ be the natural  projection map onto $X_k$ and let $J_k: X_k \to Z$ 
be the corresponding natural inclusion map
for $k \in \mathbb N_0 =: \mathbb N \cup \{0\}$.
We extend our operator matrix notation to operators on $Z$, and write 
$U = (U_{r,s}) \in  \mathcal K(Z)$, where 
\[
U_{r,s} = (U)_{r,s} =: P_rUJ_s \in \mathcal K(X_s,X_r) \textrm{ for }  r, s \in \mathbb N_0.
\]
(It should be noticed that if we are given
$U_{r,s} \in \mathcal K(X_s,X_r)$ for all $r,s \in \mathbb N_0$,
then it is a separate question whether the formal operator matrix 
$U = (U_{r,s})$ defines a compact or even a bounded operator on $Z$.) 

For any given 
subset $A \subset \mathbb N$ we define
\[
\mathcal I_A = \{U = (U_{r,s}) \in  \mathcal K(Z):  U_{0,0} \in  \mathcal A(Y) \textrm{ and } 
U_{0,s} \in  \mathcal A(X,Y) \textrm{ for all } s \in A\}.
\]
Observe that if  $U = (U_{r,s}) \in  \mathcal K(Z)$, 
then $U_{r,s} \in \mathcal A(X_s,X_r)$ whenever $r \ge 1$, since 
$\mathcal K(Y,X) = \mathcal A(Y,X)$ and $\mathcal K(X) = \mathcal A(X)$ by construction. 
Hence only the components $U_{0,s} \in \mathcal K(X,Y)$  
for $s \in \mathbb N$ will play an explicit role  in the definition 
of $\mathcal I_A$.

\smallskip

\textit{Claim}:  the family $\{\mathcal I_A: A \subset \mathbb N\}$ has the following 
properties:

\smallskip

\begin{itemize}
\item [(i)] $\mathcal I_A$ is a closed ideal of $\mathcal K(Z)$ satisfying
$\mathcal A(Z) \varsubsetneq   \mathcal I_A  \varsubsetneq  \mathcal K(Z)$ 
for all subsets $\emptyset \neq A \varsubsetneq \mathbb N$. In addition,
$\mathcal I_{\mathbb N} = \mathcal A(Z)$ and right multiplication on $\mathcal I_A$ 
satisfies $US \in \mathcal A(Z)$ for
$U \in \mathcal I_A$ and $S \in \mathcal K(Z)$.  In particular, the quotient ideals
$q( \mathcal I_A)$ are nilpotent in $\mathfrak{A}_Z$ whenever 
$\emptyset \neq A \varsubsetneq \mathbb N$.

\item  [(ii)]  If $A \subset B$, then $\mathcal I_B  \subset  \mathcal I_A$. Moreover, if 
$A \varsubsetneq B$, then $\mathcal I_A / \mathcal I_B$ is infinite-dimensional.

\item  [(iii)]  $\mathcal I_A \cap \mathcal I_B = \mathcal I_{A \cup B}$ and 
$\mathcal I_A + \mathcal I_B = \mathcal I_{A \cap B}$ for all subsets 
$A, B \subset \mathbb N$. In particular, 
$\mathcal I_A + \mathcal I_B$ is closed in $\mathcal K(Z)$ for all 
$A, B \subset \mathbb N$.
\end{itemize}

\smallskip

Hence  the family 
$\{\mathcal I_A: \emptyset \neq A \varsubsetneq \mathbb N\}$ 
of non-trivial closed ideals of $\mathcal K(Z)$
does not have a smallest or 
a largest element, and $\mathcal I_{\{k\}}$ are maximal elements of this family
for each $k \in \mathbb N$.
By Proposition \ref{quotient} the family $\{q(\mathcal I_A): 
\emptyset \neq A \varsubsetneq \mathbb N\}$ is an uncountable family of 
non-trivial closed ideals of $\mathfrak{A}_Z$ having the reverse partial order structure of 
$(\mathcal P(\mathbb N),\subset)$.

\end{ex}

\begin{proof}
(i) For $s \in \mathbb N_0$ define the bounded linear map 
$\psi_{s}: \mathcal K(Z) \to \mathcal K(X_s,Y)$ by $\psi_{s}(S) = P_0SJ_s$
for $S \in \mathcal K(Z)$. Deduce that
\[
\mathcal I_A = \bigcap_{s \in A \cup \{0\}} \psi_{s}^{-1}(\mathcal A(X_s,Y))
\]
is a closed linear subspace of $\mathcal K(Z)$. 

Let $U  = (U_{r,s})  \in \mathcal I_A$ and 
$S = (S_{r,s}) \in \mathcal K(Z)$ be arbitrary. 
We next claim that $SU \in \mathcal I_A$ and $US \in  \mathcal I_A$, which means  that 
$\mathcal I_A$ is a closed ideal of $\mathcal K(Z)$. Note first that 
$SU \in \mathcal K(Z)$ so that the components 
$(SU)_{r,s} \in \mathcal K(X_s,X_r)$ for all $r,s \in \mathbb N_0$, and
$(SU)_{r,s} \in \mathcal K(X_s,X_r) = \mathcal A(X_s,X_r)$ for $r \in \mathbb N$. 
Similar facts hold for the components  $(US)_{r,s}$.

In order to verify that $(SU)_{0,s} \in \mathcal A(X_s,Y)$ for $s \in A \cup \{0\}$, observe that 
the finite sums 
\[
\sum_{k=0}^N S_{0,k}U_{k,s} = \sum_{k=0}^N P_0SJ_kP_kUJ_s \in \mathcal A(X_s,Y)
\]
for all $N \in \mathbb N$,  since 
$U_{0,s} = P_0UJ_s \in \mathcal A(X_s,Y)$ for $s \in A$ and $U_{0,0} \in \mathcal A(Y)$
by assumption. Moreover, 
\[
\Vert P_0SUJ_s - \sum_{k=0}^N P_0SJ_kP_kUJ_s\Vert 
= \Vert P_0S\big(U -  \sum_{k=0}^N J_kP_kU\big)J_s\Vert
\le \Vert S\Vert \cdot \Vert U -  \sum_{k=0}^N J_kP_kU\Vert \to 0
\]
as $N \to \infty$. Here $\Vert U -  \sum_{k=0}^N J_kP_kU\Vert \to 0$ as $N \to \infty$,
because the sequence $\sum_{k=0}^N J_kP_k \to I_Z$ pointwise on $Z$
as $N \to \infty$ and $U \in \mathcal K(Z)$ is compact. 
In a similar manner one deduces that $(US)_{0,s} \in \mathcal A(X_s,Y)$
for all $s \in \mathbb N_0$. In this case 
 $\sum_{k=0}^N U_{0,k}S_{k,s} \in \mathcal A(X_s,Y)$ for all  
$s \in \mathbb N_0$ and all $N$, since we also have $U_{0,0} \in \mathcal A(Y)$.

If $U = (U_{r,s}) \in \mathcal I_{\mathbb N}$ is arbitrary, 
then $U_{r,s} \in \mathcal A(X_s,X_r)$
for all $r, s \in \mathbb N_0$. It follows that $U \in \mathcal A(Z)$ from
the general fact proved separately below  in Lemma \ref{sum}, so that 
$\mathcal I_{\mathbb N} = \mathcal A(Z)$.  Thus the above argument 
says  that 
$US \in \mathcal I_{\mathbb N} = \mathcal A(Z)$ whenever $U \in \mathcal I_A$, 
$S \in \mathcal K(Z)$, and  $\emptyset \neq A \varsubsetneq \mathbb N$.
In particular, $UV \in \mathcal A(Z)$ whenever $U, V \in \mathcal I_A$, so that
$q( \mathcal I_A)$ defines a nilpotent ideal in $\mathfrak{A}_Z$.

Finally we verify that $\mathcal A(Z) \varsubsetneq   \mathcal I_A  \varsubsetneq  
\mathcal K(Z)$  for  
$\emptyset \neq A \varsubsetneq \mathbb N$. First fix $s \in A$ and pick 
$U_{0,s} \in \mathcal K(X_s,Y) \setminus \mathcal A(X_s,Y)$. Define 
$U = (U_{r,t}) \in \mathcal K(Z)$ by $U_{r,t} = 0$ for $(r,t) \neq (0,s)$,
so that $U \notin \mathcal I_A$. Next  fix $t \in A^c$ and pick 
$V_{0,t} \in \mathcal K(X_t,Y) \setminus \mathcal A(X_t,Y)$. 
If we define $V = (V_{r,s}) \in \mathcal K(Z)$ by $V_{r,s} = 0$ for $(r,s) \neq (0,t)$,
then $V \in \mathcal I_A \setminus \mathcal A(Z)$. 

\medskip

\noindent (ii) Clearly  $\mathcal I_B  \subset  \mathcal I_A$ if $A \subset B$. 
Let $k \in B \setminus A$.
Since the quotient $\mathcal K(X_k,Y)/\mathcal A(X_k,Y)$ is infinite-dimensional by construction, 
there is  a normalised sequence $(S^{(j)}) \subset \mathcal K(X_k,Y)$ such that 
\[
dist(S^{(j)} - S^{(i)},\mathcal A(X_k,Y)) > 1/2 \textrm{ for all } i \neq j.
\] 
Define a normalised sequence $(V_j) \subset \mathcal K(Z)$ 
such that the components of $V_j = (V_{r,s}^{(j)})$ satisfy
$V_{0,k}^{(j)} = S^{(j)}$ and $V_{r,s}^{(j)} = 0$ for $(r,s) \neq (0,k)$.
Then $V_j \in \mathcal I_A \setminus \mathcal I_B$ for all $j \in \mathbb N$.
Let $S = (S_{r,s}) \in \mathcal I_B$ be arbitrary. It follows that 
\[
\Vert V_j - V_i - S\Vert \ge \Vert P_0(V_j - V_i - S)J_k\Vert
= \Vert S^{(j)} - S^{(i)} - S_{0,k}\Vert > 1/2
\]
for all $i \neq j$, since $S_{0,k} \in \mathcal A(X_k,Y)$. 
Conclude that $dist(V_j - V_i,\mathcal I_B) \ge 1/2$ for $i \neq j$, so that
the quotient  $\mathcal I_A / \mathcal I_B$ is infinite-dimensional.

\medskip

\noindent (iii) Let $A, B \subset \mathbb N$ be non-empty subsets. If 
$U =  (U_{r,s}) \in \mathcal I_A \cap \mathcal I_B$, then $U_{0,k} \in \mathcal A(X_k,Y)$ 
for all $k \in A \cup B$, so that $ \mathcal I_A \cap \mathcal I_B \subset \mathcal I_{A \cup B}$.
Conversely,  $\mathcal I_{A \cup B} \subset \mathcal I_A$ 
and $\mathcal I_{A \cup B} \subset \mathcal I_B$  by monotonicity.

We also get that $\mathcal I_A \subset \mathcal I_{A \cap B}$ and 
$\mathcal I_B \subset \mathcal I_{A \cap B}$ by monotonicity, 
so that $\overline{\mathcal I_A + \mathcal I_B} \subset \mathcal I_{A \cap B}$, 
because $\mathcal I_{A \cap B}$ is a closed ideal. 
Conversely, let $U = (U_{r,s}) \in \mathcal I_{A \cap B}$ be arbitrary.
We define the bounded operator $\theta_A \in \mathcal L(Z)$  by 
\[
\theta_Az = (\chi_A(k)z_k)  \textrm{ for } z = (z_k) \in Z,
\]
where $\chi_A$ is the characteristic function of $A \subset \mathbb N_0$.
Thus  $U\theta_A \in \mathcal K(Z)$ and
\[
U =  \big(U - U\theta_A\big) + U\theta_A 
\]
so it will be enough to verify that $U\theta_A  \in \mathcal I_B$ and 
$U - U\theta_A \in \mathcal I_A$. In fact,
this immediately yields that $\mathcal I_A + \mathcal I_B = \mathcal I_{A \cap B}$, 
and hence that $\mathcal I_A + \mathcal I_B$ 
is a closed ideal of $\mathcal K(Z)$.

Let $k \in B$ be arbitrary. If $k \in A \cap B$, then  
$\big( U\theta_A\big)_{0,k} = U_{0,k} \in \mathcal A(X_k,Y)$ by assumption.
On the other hand, if $k \in B \setminus A$, then $\big( U\theta_A\big)_{0,k} = 0$. 
We deduce that $U\theta_A  \in \mathcal I_B$ as  $\big( U\theta_A\big)_{0,0} = 0$.
Moreover, if $k \in A$ is arbitrary, then
\[
\big(U - U\theta_A\big)_{0,k} = U_{0,k} - U_{0,k} = 0.
\] 
We conclude  that $U - U\theta_A \in \mathcal I_A$, since also 
$\big(U - U\theta_A\big)_{0,0} = U_{0,0} \in \mathcal A(Y)$ .
 \end{proof}

The following technical result was used in the proof of Claim (i) of Example \ref{infty}. 
It is a vector-valued analogue of a well-known fact for scalar operator matrices.

\begin{lma}\label{sum}
Suppose that $X_k$ are Banach spaces for $k \in \mathbb N$, and let
\[
Z = \big( \oplus_{k=1}^\infty X_k\big)_p
\]
 be the corresponding direct $\ell^p$-sum,
where $1 < p < \infty$. Let $P_k$ be the natural projection onto $X_k \subset Z$ and 
$J_k: X_k \to Z$ be the natural inclusion for $k \in \mathbb N$. Assume that 
$T = (T_{r,s}) \in \mathcal K(Z)$ is a compact operator such that 
the components $T_{r,s} =: P_rTJ_s \in \mathcal A(X_s,X_r)$ for all $r, s \in \mathbb N$. Then
$T = (T_{r,s}) \in \mathcal A(Z)$ is an approximable operator.
\end{lma}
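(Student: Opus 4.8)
The plan is to approximate $T$ in operator norm by the finite ``corner'' operators $Q_mTQ_n$, where $Q_n =: \sum_{k=1}^n J_kP_k \in \mathcal L(Z)$ is the natural projection of $Z$ onto the span of its first $n$ coordinate subspaces. Since $1 < p < \infty$ we have $\Vert Q_n\Vert \le 1$ for every $n$, and $Q_nz \to z$ for each $z \in Z$. Because $T \in \mathcal K(Z)$ the image $\overline{T(B_Z)}$ is norm-compact, so the pointwise convergence $Q_m \to I_Z$ is uniform on $\overline{T(B_Z)}$, whence $\Vert Q_mT - T\Vert \to 0$ as $m \to \infty$.

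Next I would check that also $\Vert TQ_n - T\Vert \to 0$ as $n \to \infty$; this is where the hypothesis $1 < p < \infty$ really enters. Writing $Z_n^{\mathrm{tail}} = \overline{\mathrm{span}}\{X_k : k > n\} = (I_Z - Q_n)(Z)$, we have $\Vert TQ_n - T\Vert = \Vert T(I_Z - Q_n)\Vert = \sup\{\Vert Tz\Vert : z \in B_Z \cap Z_n^{\mathrm{tail}}\}$. If this did not tend to $0$ there would be $\varepsilon > 0$ and $z_n \in B_Z \cap Z_n^{\mathrm{tail}}$ with $\Vert Tz_n\Vert \ge \varepsilon$. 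But $(z_n)$ is weakly null: for $\varphi = (\varphi_k) \in Z^* = \big(\oplus_k X_k^*\big)_{p'}$, Hölder's inequality across the coordinates gives $\vert \langle \varphi, z_n\rangle\vert \le \big(\sum_{k>n}\Vert \varphi_k\Vert^{p'}\big)^{1/p'}\,\Vert z_n\Vert \to 0$. Since $T$ is compact, $\Vert Tz_n\Vert \to 0$, a contradiction. Combining the two one-sided estimates,
\[
\Vert Q_mTQ_n - T\Vert \le \Vert Q_m\Vert \cdot \Vert TQ_n - T\Vert + \Vert Q_mT - T\Vert \to 0 \quad\text{as } m,n \to \infty.
\]

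Finally I would observe that each corner operator lies in $\mathcal A(Z)$: indeed
\[
Q_mTQ_n = \sum_{r=1}^m \sum_{s=1}^n J_r(P_rTJ_s)P_s = \sum_{r=1}^m \sum_{s=1}^n J_rT_{r,s}P_s
\]
is a finite sum, and for each pair $(r,s)$ the operator $T_{r,s} \in \mathcal A(X_s,X_r)$ by hypothesis, so $J_rT_{r,s}P_s \in \mathcal A(Z)$ by the operator ideal property of $\mathcal A$; hence $Q_mTQ_n \in \mathcal A(Z)$. Since $\mathcal A(Z)$ is norm-closed and $Q_mTQ_n \to T$, we conclude $T \in \mathcal A(Z)$. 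The only genuinely non-formal point is the norm convergence $TQ_n \to T$, which relies on the fact that a bounded sequence whose $n$-th term is supported on $\{k : k > n\}$ is weakly null in the $\ell^p$-sum for $1 < p < \infty$; everything else is routine bookkeeping with the ideal property of $\mathcal A$ and the compactness of $T$.
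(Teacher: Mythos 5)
Your proof is correct, and its overall scheme coincides with the paper's: approximate $T$ in operator norm by the corner operators built from the projections $Q_n=\sum_{k=1}^nJ_kP_k$, note that each corner lies in $\mathcal A(Z)$ by the hypothesis on the components and the ideal property, and use that $\mathcal A(Z)$ is closed. The only place where you genuinely diverge is the crucial right-hand estimate $\Vert TQ_n-T\Vert\to 0$. The paper passes to adjoints: $\Vert T-TQ_n\Vert=\Vert T^*-Q_n^*T^*\Vert$, where $Q_n^*$ is the coordinate projection on $Z^*=\big(\oplus_k X_k^*\big)_{p'}$ converging pointwise to $I_{Z^*}$ and $T^*$ is compact by Schauder's theorem. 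You instead argue directly on $Z$: a bounded sequence with $z_n$ supported on the coordinates $k>n$ is weakly null (by H\"older against a fixed $\varphi\in\big(\oplus_kX_k^*\big)_{p'}$), and a compact operator is completely continuous, so $\Vert Tz_n\Vert\to 0$, contradicting a putative lower bound on $\Vert T(I_Z-Q_n)\Vert$. Both routes use the hypothesis $1<p<\infty$ in the same essential way, namely that $p'<\infty$ so that tails of elements of the dual sum are small; yours avoids invoking Schauder's theorem at the cost of the (equally standard) fact that compact operators map weakly null sequences to norm null sequences. Everything else, including the identification $Q_mTQ_n=\sum_{r\le m}\sum_{s\le n}J_rT_{r,s}P_s\in\mathcal A(Z)$ and the final triangle inequality, matches the paper.
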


\begin{proof}
Let $Q_n = \sum_{k=1}^n J_kP_k$ for $k \in \mathbb N$, that is, 
$Q_nz = (z_1,\ldots,z_n,0,\ldots)$
for $z = (z_k) \in Z$, is the natural projection of $Z$ onto 
the closed linear subspace $\oplus_{k=1}^n X_k \subset Z$.

We know by definition that
$Q_n =  \sum_{k=1}^n J_kP_k  \to I_Z$ pointwise on $Z$ as $n \to \infty$. 
It follows that 
\[
\Vert T - Q_nT\Vert = \Vert (I-Q_n)T\Vert \to 0  \textrm{ as } n \to \infty,
\]
since $T$ is a compact operator on $Z$. Moreover, by standard duality $Q_n^*$
is the natural projection 
of $Z^* =  \big( \oplus_{k=1}^\infty X_k^* \big)_{p'}$ onto the closed subspace 
$\oplus_{k=1}^n X_k^* \subset Z^*$
for $n \in \mathbb N$, where $p' \in (1,\infty)$ is the dual exponent of $p$. 
Hence 
\[
\Vert T - TQ_n\Vert = \Vert T^* - Q_n^*T^*\Vert \to 0  \textrm{ as } n \to \infty,
\]
since also $Q_n^* \to I_{Z^*}$ pointwise on $Z^*$ as $n \to \infty$ and 
$T^* \in \mathcal K(Z^*)$. (Here we use that $1 < p' < \infty$.)

Observe next that  $Q_nTQ_n = \sum_{k, l=1}^n J_kP_kTJ_lP_l 
= \sum_{k, l=1}^n J_kT_{k,l}P_l  \in \mathcal A(Z)$ for all $n \in \mathbb N$ 
by our assumption on $T$. 
Deduce from the above facts that
\[
\Vert T - Q_nTQ_n\Vert
\le \Vert T - Q_nT\Vert  + \Vert Q_n\Vert \cdot \Vert T - TQ_n \Vert \to 0
\]
as $n \to \infty$, which yields that $T \in \mathcal A(Z)$.
\end{proof}

We also formulate a version of Example \ref{infty} for finite direct sums.
Let $N \ge 2$ and consider $Z_N = \big( \oplus_{k=0}^N X_k\big)_p$\ , where 
$X_0 = Y$ and $X_k = X$ for $k = 1,\ldots, N$, and the spaces $X$ and $Y$ 
are those of Example \ref{infty}. Put $[N] = \{1,\ldots,N\}$. 

\begin{ex}\label{fin}
Let $Z_N = \big( \oplus_{k=0}^N X_k\big)_p$ be as above for $N \ge 2$
and define $\mathcal I_A \subset \mathcal K(Z_N)$ 
as in Example \ref{infty} for subsets $A \subset [N]$.
Then the family 
\[
\{\mathcal I_A: \emptyset \neq A \varsubsetneq [N]\}
\]
contains $2^N - 2$ closed ideals of $\mathcal K(Z_N)$ that satisfy
$\mathcal A(Z_N) \varsubsetneq \mathcal I_A  \varsubsetneq \mathcal K(Z_N)$.
Moreover, this family of closed ideals have the reverse order structure of 
the power set $(\mathcal P([N]),\subset)$.
\end{ex}

\begin{proof}
The argument is a simpler variant of that of Example \ref{infty}.
In this case $U = (U_{r,s})_{r,s = 0}^N \in \mathcal K(Z_N)$ is a $(N+1) \times (N+1)$
operator matrix, and  there are no convergence issues with the operators
on $Z_N$  or in the verification of the ideal properties.
We leave the details to the interested reader.
\end{proof}

\begin{remark}\label{Lideals}
We note that in Example \ref{infty} the closed ideals
$\mathcal I_A$ of $\mathcal K(Z)$  are not  ideals of 
$\mathcal L(X)$ for any $\emptyset \neq A \varsubsetneq \mathbb N$.

In fact, fix $r \notin A \cup \{0\}$ and $s \in A$. By construction we may pick
$T \in \mathcal K(X_r,Y) \setminus \mathcal A(X_r,Y)$. Define
$U = (U_{k.l}) \in \mathcal I_A$ through $U_{0,r} = T$ and $U_{k.l} = 0$ for 
$(k,l) \neq (0,r)$, and the operator $V = (V_{k,l}) \in \mathcal L(Z)$ by 
$V_{r,s} = I_X$ and $V_{k,l} = 0$ for $(k,l) \neq (r,s)$. It follows that
\[
(UV)_{0,s} = U_{0,r}V_{r,s} = T \notin \mathcal A(X_r,Y),
\] 
so that $UV \notin \mathcal I_A$. 
\end{remark}

We also draw attention to a few  problems suggested by our results. 

\smallskip

\begin{qu}
Let $X \subset \ell^p$ be a closed subspace such that $\mathfrak{A}_X \neq \{0\}$. 
Is it always possible to find a non-trivial closed ideal 
\[
\mathcal A(X) \varsubsetneq \mathcal J \varsubsetneq  \mathcal K(X)\  ?
\]
By Example \ref{newideal} there is such a closed subspace 
$X \subset \ell^p$  for $4 < p < \infty$. Note that for $1 \le p < 2$ one has 
$\overline{\mathcal{QN}_p(X)} = \mathcal A(X)$ for any $X$, so  
new classes are needed.
\end{qu}

\smallskip

There remains combinations of $(p,q)$ for which 
examples of a strict inclusion  \eqref{nKA} does not appear to be known.  

\smallskip

\begin{qu}\label{Q2}
Let $1 \le p < 2 < q < \infty$. Are there closed subspaces 
$X \subset \ell^p$ and $Y \subset \ell^q$ such that 
$\mathcal A(X,Y) \varsubsetneq \mathcal K(X,Y)$ ?
Note that there are closed subspaces $X \subset \ell^p$, where 
$p \in [1, \infty)$ and $p \neq 2$,
and $Z \subset c_0$ for which
\[
\mathcal A(X,Z) \varsubsetneq \mathcal K(X,Z)\  \textrm{ and } \ 
\mathcal A(Z,X) \varsubsetneq \mathcal K(Z,X).
\]
Namely, by Facts \ref{ap1}.(ii) there
is  a closed subspace  $X \subset \ell^p$, for which 
there is an operator $T \in \mathcal K(X) \setminus \mathcal A(X)$.
By Terzio\u{g}lu's compact factorisation theorem \cite{T} there is a closed subspace 
$Z \subset c_0$ and
a factorisation $T = BA$, where $A \in \mathcal K(X,Z)$ and $B \in \mathcal K(Z,X)$. 
Here  $A$ and $B$ cannot be approximable operators.
\end{qu}

\smallskip

The examples of Theorem \ref{ideals}  point to 
a number of further questions. 

\begin{qus}
(i) Let $1 \le p < 2 < q < \infty$, and suppose that $X \subset \ell^p$ and $Y \subset \ell^q$
are closed subspaces such that  $\mathfrak{A}_X \neq \{0\}$ and  $\mathfrak{A}_Y \neq \{0\}$. 
How may one construct further closed ideals 
\[
\mathcal A(X \oplus Y) \varsubsetneq \mathcal J \varsubsetneq  \mathcal K(X \oplus Y)\  
\]
in addition to those contained in Theorem \ref{ideals} and Example \ref{nonmax}?
It is straightforward to check that if $\mathcal A(X,Y) \varsubsetneq M  \subset \mathcal K(X,Y)$ 
is a closed linear subspace such that 
\begin{equation*}
SA \in M \textrm{ and }  BS \in M  \textrm{ for all } S \in M, \ A \in \mathcal K(X), 
\ B \in \mathcal K(Y),
\end{equation*}
then 
\[
\mathcal K_M =: \left( \begin{array}{ccc}
\mathcal A(X) & \mathcal A(Y,X)\\
M & \mathcal A(Y)\\
\end{array} \right)
\]
defines a new non-trivial closed ideal of $\mathcal K(X\oplus Y)$. 
We do not have examples of such non-trivial ideal components 
$M  \subset \mathcal K(X,Y)$ (this is also related to Question \ref{Q2} above).

(ii) Is it possible to iterate the construction of Theorem \ref{ideals}
for finite direct sums $\oplus_{k=1}^n X_k$ with $n \ge 3$ ?
\end{qus}

\smallskip

Our concluding remarks point to some research in parallel directions. 

\begin{remarks}
(i) Let $\mathcal S$ be the class of strictly singular operators, which defines a closed 
Banach operator ideal contained in the class $\mathcal R$ of the inessential operators. 
By Proposition \ref{real}  the quotient algebra 
\[
\mathfrak{S}^{\mathcal K}_X =: \mathcal S(X)/\mathcal K(X)
\]
is a radical Banach algebra for both real and complex scalars.
By a simple modification of Proposition \ref{quotient} 
the closed ideals of $\mathfrak{S}^{\mathcal K}_X$ correspond to the closed ideals
$\mathcal J$ satisfying 
$\mathcal K(X) \subset \mathcal J \subset \mathcal S(X)$.  
Much more is known about such ideals for 
classical Banach spaces $X$. For instance, 
there is a continuum of closed ideals  
$\mathcal K(X)  \subset \mathcal J \subset  \mathcal S(X)$
in the following cases: 

\smallskip

(a) $X = \ell^p \oplus \ell^q$ for $1 < p < q < \infty$ and
$X = L^p$ for $1 < p < \infty$ and $p \neq 2$ by Schlumprecht and Zs\'ak \cite{SZ}, 

(b)  $X = L^1$ and $X = C(0,1)$  by Johnson, Pisier and Schechtman \cite{JPS}. 

\smallskip

\noindent  In addition, Tarbard \cite{Td} has constructed for each $k \ge 2$
a real Banach space $X_k$ having a Schauder basis such that 
$dim(\mathfrak{S}^{\mathcal K}_{X_k}) = k-1$.
A similar remark also applies to the 
radical quotient algebras $\mathcal S(X)/\mathcal A(X)$, since 
$\mathcal K(X) = \mathcal A(X)$ for these results.

This line of research concerns classical spaces that have the A.P.,
and we refer to the introductions of \cite{SZ}, \cite{JPS} and  \cite{BKL}
for many additional results and  further references  about 
closed ideals of  $\mathcal L(X)$.

\smallskip

(ii) Let $(\mathcal I,\vert \cdot \vert_{\mathcal I})$ be a Banach operator ideal 
such that $\mathcal I \subset \mathcal K$. 
Properties of the generalised compact-by-approximable algebras 
\[
\mathfrak{A}_X^{\mathcal I} =: \mathcal I(X)/\overline{\mathcal F(X)}^{\vert \cdot \vert_{\mathcal I}}
\]
are studied in \cite{Wi} for Banach spaces $X$. 
The generalised setting of $\mathfrak{A}_X^{\mathcal I}$ allows for new phenomena and for 
examples of different structure of closed ideals. 
\end{remarks}

\bigskip

\textbf{Acknowledgements.} This work is  part of the Ph.D.-thesis of Henrik Wirzenius under the supervision of the first author. We are grateful to 
Garth Dales for drawing our attention to the problems in  \cite{D13} concerning the 
compact-by-approximable algebra $\mathfrak{A}_X$ and for encouraging 
work in this direction. We also thank  Niels Laustsen for various discussions and suggestions, 
and  Jan Fourie for supplying reference \cite{F83}.

Henrik Wirzenius gratefully acknowledges the financial support of the 
Magnus Ehrnrooth Foundation.

\end{document}